\documentclass[3p,times]{elsarticle}

\usepackage{xcolor}
\usepackage{amsmath,amsthm,amsfonts,amssymb,bbm,euscript,mathrsfs}
\usepackage{enumerate,enumitem,mathabx}
\usepackage[english]{babel}
\usepackage[utf8]{inputenc}
\usepackage{dsfont}
\usepackage{mathtools}
\usepackage{nccmath}
\usepackage[normalem]{ulem}
\usepackage{hyperref}

\DeclareUnicodeCharacter{00A0}{~}

\biboptions{numbers,sort&compress}

\newtheorem{assumption}{Assumption}
\newtheorem{prop}{Proposition}
\newtheorem{example}{Example}
\newtheorem{definition}{Definition}
\newtheorem{theorem}{Theorem}
\newtheorem{coro}{Corollary}
\newtheorem{lemma}{Lemma}
\newtheorem{remark}{Remark}

\newcommand{\eqdef}{:=}
\newcommand{\un}{\mathds{1}}

\newcommand{\bP}{{\mathbb P}}
\newcommand{\bE}{{\mathbb E}}
\newcommand{\bR}{{\mathbb R}}
\newcommand{\bQ}{{\mathbb Q}}

\newcommand{\bN}{{\mathbb N}}

\newcommand{\cA}{{\mathcal A}}

\newcommand{\cB}{{\mathcal B}}

\newcommand{\cC}{{\mathcal C}}
\newcommand{\cD}{{\mathcal D}}

\newcommand{\cP}{{\mathcal P}}

\newcommand{\cN}{{\mathcal N}}
\newcommand{\cT}{{\mathcal T}}
\newcommand{\cF}{{\mathcal F}}

\newcommand{\bs}{\boldsymbol}

\DeclareMathOperator{\supp}{supp}

\newcommand{\inX}{X}
\newcommand{\inY}{Y}

\DeclareMathOperator{\graph}{gr}
\DeclareMathOperator{\crit}{crit}

\newcommand{\argmin}{\mathop{\rm argmin}}

\newcommand{\BEQ}{\begin{equation}}
\newcommand{\EEQ}{\end{equation}}

\newcommand{\conv}{\operatorname{conv}}

\newcommand{\pbest}{\operatorname{pbest}}
\newcommand{\gbest}{\operatorname{gbest}}

\begin{document}
\begin{frontmatter}

\title{A continuous-time limit for particle swarm optimization}

\author{Pascal Bianchi}
\author{Radu-Alexandru Dragomir}
\author{Yerkin Yesbay}

\address{LTCI, Télécom Paris, Institut Polytechnique de Paris, France}

\begin{abstract}
  Particle swarm optimization (PSO) is a popular class of gradient-free methods for global optimization.  
   We study a continuous-time model for momentum-free PSO with personal-best  memory and global-best
  interaction. Unlike previous models which rely on smooth approximations, we keep the original discontinuous personal- and global-best processes. We study the small-step limit and show that weak limit points satisfy a stochastic
  differential inclusion: personal-best variables obey a relaxed
  record condition, while fast switching of the global-best selector
  leads to a convexified interaction.  We then give conditions under which the true original
  selectors are recovered. Under suitable regularity and nondegeneracy
  assumptions, Brownian noise allows us to identify the personal-best
  processes with the latest record positions of the limiting trajectories and
  to prove that the particle attaining the global best is unique at
  almost every time. The resulting continuous-time dynamics retain
  both selection rules without smoothing them.
\end{abstract}

\begin{keyword}
particle swarm optimization \sep stochastic differential inclusion
\sep nonconvex optimization \sep stochastic processes
\end{keyword}

\end{frontmatter}

\section{Introduction}
We consider the problem of minimizing a non-convex continuous function
$f:\bR^d\to\bR$ without access to derivatives.
Such problems fall within the scope of zeroth-order, or gradient-free,
optimization.
Particle swarm optimization (PSO) \cite{kennedy1995particle} is a family
of metaheuristic algorithms based on a system of $n$ interacting particles
and is among the most popular zeroth-order methods used by practitioners.
Other notable methods include simulated annealing
\cite{kirkpatrick1983optimization}, Bayesian optimization
\cite{frazier2018tutorial}, genetic algorithms
\cite{holland1992adaptation}, evolution strategies
\cite{hansen2016cma} and finite-difference methods
\cite{conn2009introduction}.

The algorithm we consider is \textit{momentum-free}%
\footnote{The usual PSO method also includes a momentum term,
which adds a fraction of the previous displacement to the update;
for simplicity, we do not consider this term here.} PSO.
In this variant, a system of $n$ particles
$(x^1_k,\dots,x^n_k)_{k\geq0}$ evolves according to the recursion
\begin{equation}
  \label{eq:algo_intro}
x_{k+1}^i
= x_k^i
+ \gamma \beta \bigl(\theta_k-x_k^i\bigr)
+ \gamma \bar\beta \bigl(y_k^i-x_k^i\bigr)
+ \sqrt{\gamma}\sigma\,\xi_{k+1}^i,
\qquad i\in[n],
\end{equation}
where $\gamma>0$ is the step size and
$\beta,\bar\beta,\sigma\geq0$, with $\beta+\bar\beta=1$.
The drift is a convex combination of displacements toward
the \textit{personal best} $y_k^i$ and the \textit{global best}
$\theta_k$, chosen such that
\begin{align*}
y_k^i
&\in \argmin \left\{
f(x): x\in\{x_0^i,\dots,x_k^i\}
\right\},
\qquad i\in[n],\\
\theta_k
&\in \argmin \left\{
f(y): y\in\{y_k^1,\dots,y_k^n\}
\right\}.
\end{align*}
In case of a tie, $y_k^i$ is the position at the latest iteration
at which particle $i$ attained its best objective value so far.
An arbitrary tie-breaking rule is used to define $\theta_k$.
See Section~\ref{s:PSO} for the formal definitions.
The term $\sqrt{\gamma}\sigma\,\xi_{k+1}^i$ is a random perturbation
used to promote exploration of the space.

While PSO is popular among practitioners, the precise mechanisms
behind its success are still not fully understood. To this end, continuous-time models are helpful as they allow to use tools from dynamical systems and stochastic calculus to analyze the properties of the method in the small-step limit, such as conditions of convergence to the global minimizer, as well as convergence rates. 

However, deriving a continuous-time model of PSO as a small-step limit
is complicated by the discontinuous dependence of the personal- and global-best selectors on the particle trajectories. Indeed, small perturbations
of the trajectories can lead to large changes in the selected
positions.
This prevents a direct application of standard diffusion
approximation results.

To bypass these irregularities, a line of research studies regularized
variants of particle-based optimization dynamics.
An example is provided by Consensus-Based Optimization (CBO)
\cite{pinnau2017consensus},
which can be interpreted as a simplified version of PSO. In CBO, the
particles are driven toward a weighted barycenter of the current swarm,
where the weights favor particles with smaller values of the objective
function. This weighted average is a smooth approximation of the \textit{instantaneous} global best (not the historical one), with a smoothing based on Laplace's principle.

This regularization has proved extremely useful from an analytical point
of view. It makes it possible to derive mean-field limits and to study the
long-time behavior of the resulting dynamics, typically in regimes where
the time step tends to zero, the number of particles tends to infinity,
and the time horizon becomes large. This approach has been extensively
studied; see \cite{carrillo2018analytical,huang2022mean,
fornasier2024consensus,bianchi2025consensus}.

The smoothing approach, however, has several drawbacks. First, it only
provides an approximate model of the original PSO algorithm, which is still
widely used in practice and relies on the actual personal and global bests.
Second, replacing the best particles by linear combinations may alter the
effective optimization target. In the CBO model, when the noise variance
$\sigma^2$ decreases, it is known that, under suitable assumptions on the
initialization and on the objective function, the particles may concentrate
around a consensus point
\cite{carrillo2018analytical,fornasier2025consensus,bianchi2025consensus}.
However, this limiting consensus point need not coincide with a global
minimizer of $f$.

This illustrates a general trade-off. The regularization that makes the
dynamics analytically tractable may also modify the limiting behavior of
the algorithm. The resulting
model is therefore easier to analyze, but it is no longer exactly the PSO
algorithm based on the true personal and global bests. Our goal is instead to derive a continuous-time limit that is more faithful to the original method.

\subsection*{Contributions}
We derive continuous-time dynamics describing the small-step
behavior of the PSO algorithm~\eqref{eq:algo_intro}, with the
number of particles fixed.
We prove tightness of the interpolated particle and personal-best
processes and characterize their weak limit points as
$\gamma\downarrow0$.

Our analysis proceeds in two steps. Under weaker assumptions, we first show that every limit point satisfies relaxed personal-best and global-best conditions. Under additional regularity and nondegeneracy assumptions, we then show that the original selectors are recovered. We present this latter result first.

\paragraph{Main result}
Our main result assumes Gaussian noise with $\sigma>0$,
together with suitable regularity and nondegeneracy conditions
on $f$ and the initialization.
Under these assumptions, Theorem~\ref{th:continuous-time-limit}
shows that every weak limit point is the law of processes
$(\mathbf X,\mathbf Y)$ satisfying
\BEQ\label{eq:continuous-time-dynamics-intro}
dX_t^i
=
\left(\beta\Theta_t+\bar\beta Y_t^i-X_t^i\right)dt
+\sigma\,dB_t^i,
\qquad i\in[n],
\EEQ
where $B^1,\dots,B^n$ are independent standard Brownian motions
and the personal-best processes are given by
\BEQ\label{eq:personal-best-intro}
Y_t^i=X^i_{\tau_t^i},
\qquad
\tau_t^i:=\max\argmin_{0\leq s\leq t}f(X_s^i),
\qquad i\in[n].
\EEQ
Thus, $Y_t^i$ is the position at the latest time when particle
$i$ attained its best objective value so far.
Although the particle trajectories are continuous, the
personal-best processes may have jumps; they are nevertheless
càdlàg (right-continuous with left limits).
The global-best process satisfies
\BEQ\label{eq:global-best-intro}
\Theta_t\in
\argmin\left\{f(y):y\in\{Y_t^1,\dots,Y_t^n\}\right\}
\quad\text{for almost every }t,\quad\text{a.s.}
\EEQ
Moreover, almost surely, the particle attaining the global best
is unique for Lebesgue-almost every $t$.
Consequently, the choice of a tie-breaking rule does not affect
the drift.
The identification of both selectors relies on Brownian
fluctuations and the nondegeneracy assumptions.

\paragraph{Relaxed dynamics}
Theorem~\ref{th:sdi}is the first step in the proof of the main result, and also applies to the noiseless case $\sigma=0$.
It involves two distinct relaxations, associated with the
personal-best memory and the global-best interaction.

The first concerns the lack of closedness of the graph of
the personal-best selector.
Even if particle trajectories converge, their latest-record
positions need not converge to the latest records of the
limiting trajectories.
We introduce a set-valued extension of the personal-best
map with a closed graph.
For a continuous trajectory $x$, let $\mathcal R_f(x)$ be
the set of càdlàg paths $y$ such that $y_0=x_0$ and
\[
\forall\,0\leq s<t,\qquad
y_t\in\argmin\left\{
f(u):u\in\{y_s\}\cup x([s,t])
\right\}.
\]
This condition requires both that $y_t$ be a record position
and that the stored records remain consistent over time:
between two times $s<t$, the memory can only retain $y_s$
or select a minimizing position visited during $[s,t]$.
It therefore rules out arbitrary changes between old
record positions.
We prove that this correspondence has a closed graph
containing the graph of the latest-record selector,
and that every weak limit satisfies
$Y^i\in\mathcal R_f(X^i)$.
The limiting memory need not, however, select the latest
record.

The second relaxation concerns the global best.
Rapid switching between particles leads to the convexified
constraint
\[
\Theta_t\in
\operatorname{conv}
\left\{
Y_t^i:
i\in\argmin_{j\in[n]}f(Y_t^j)
\right\}
\quad\text{for almost every }t,\quad\text{a.s.}
\]
Together with~\eqref{eq:continuous-time-dynamics-intro},
these two conditions characterize the weak limit points
through a stochastic differential inclusion.

To pass from Theorem~\ref{th:sdi} to Theorem~\ref{th:continuous-time-limit}, it remains to show that the relaxed conditions reduce to the original rules. This is obtained thanks to the additional regularity assumptions of Theorem~\ref{th:continuous-time-limit}, and the presence of Brownian noise, which allows to remove ambiguities almost surely, and for almost every $t$.

An example in Section~\ref{ss:convexification} shows that
convexification is necessary in the noiseless case:
rapid switching of the global-best particle can produce
a limiting interaction given by a convex combination
of tied record positions.

\subsection*{Related work}

\paragraph{Early mathematical analyses of PSO}
Early analyses of PSO focused on the stability of particle trajectories
and on the influence of the algorithm parameters.
Clerc and Kennedy~\cite{clerc2002particle} and
Trelea~\cite{trelea2003particle} study simplified deterministic dynamics
and identify parameter regions associated with convergence or instability.
Poli~\cite{poli2009mean} derives recursions for the first and second
moments while retaining the random coefficients, under the assumption
that the personal and global bests remain fixed.
Fern\'andez-Marti\'nez and Garc\'ia-Gonzalo~\cite{fernandez2010stochastic}
study stochastic stability for linear continuous and discrete PSO models
with a random center of oscillation.
Continuous-time formulations also predate CBO:
Emara and Abdel Fattah~\cite{emara2004continuous} propose deterministic
swarm dynamics with a differential equation approximating the evolution
of the personal best, and establish Lyapunov stability results.
For a broader overview of PSO variants and applications,
see~\cite{shami2022particle}.

A closer connection with the present work is provided by
Yuan and Yin~\cite{yuan2015analyzing}, who use stochastic approximation
and weak convergence methods to analyze a class of PSO algorithms.
They obtain an ordinary differential equation limit and a diffusion
limit for centered and rescaled fluctuations.
Their assumptions impose continuity and averaging conditions on
the functions representing the personal and global bests.
Our analysis instead addresses the discontinuous record selectors
defined by the particle trajectories themselves, with Brownian noise
remaining in the small-step limit.
 
\paragraph{CBO and models with memory}
Pinnau et al.~\cite{pinnau2017consensus}
introduce CBO and its mean-field formulation.
Carrillo et al.~\cite{carrillo2018analytical}
develop an analytical framework for well-posedness and consensus
formation, and relate the consensus point to global minimizers through
the Laplace principle.
Huang and Qiu~\cite{huang2022mean} prove a mean-field limit,
while Fornasier et al.~\cite{fornasier2024consensus}
establish global optimization guarantees using a quantitative Laplace
principle and estimates for the particle approximation.
Consensus formation alone does not ensure convergence to an exact
global minimizer for fixed parameters; the optimization error must
also be controlled.
Bianchi et al.~\cite{bianchi2025consensus}
study the long-time behavior of a CBO variant with small but fixed
noise intensity, and provides long run convergence guarantees at both the mean-field and
finite-particle levels.

Several works bring these models closer to PSO by incorporating memory
or inertia.
Totzeck and Wolfram~\cite{totzeck2020personal} propose CBO variants
with either exact personal-best records or weighted averages of past
positions; their well-posedness analysis concerns the weighted-memory
model.
Grassi and Pareschi~\cite{grassi2021particle} formulate stochastic
PSO dynamics with a regularized global best and a differential equation
for the memory variable, and formally derive mean-field and
small-inertia limits.
Cipriani et al.~\cite{cipriani2022zero} rigorously establish
the zero-inertia limit linking these mean-field PSO models to CBO.
Huang et al.~\cite{huang2023global} analyze consensus formation
and proximity to global minimizers for models with regularized memory;
their quantitative mean-field approximation is proved for the case
without memory.
The limit studied here keeps the number of particles fixed and starts
from the discrete personal-best and global-best rules
in~\eqref{eq:algo_intro}, without regularizing either selector.

\paragraph{Differential inclusions, convexification, and record selectors}
Differential inclusions are a standard tool for analyzing stochastic
algorithms with discontinuous or set-valued drifts.
Bena\"im et al.~\cite{benaim2005stochastic}
extend the dynamical systems approach to stochastic approximation
to differential inclusions.
Nguyen and Yin~\cite{nguyen2023stochastic} obtain differential
inclusion limits and stochastic differential inclusion limits for
centered and rescaled iterates.
The convexification caused by rapid switching is also familiar from
relaxed control theory; see Artstein~\cite{artstein1978relaxed}.
In our setting, it describes the cumulative effect of switching
between particles with nearly equal record values.

The personal-best process raises a separate issue: selecting the
latest minimizer is not a continuous operation on trajectories.
Related difficulties for nonunique maximizers are studied by
Ferger~\cite{ferger2004continuous} and
Seijo and Sen~\cite{seijo2011continuous} in extensions of the
argmax continuous mapping theorem.
Here, the limit must also preserve consistency between records
stored at different times, which leads to the relaxed record
condition in Theorem~\ref{th:sdi}.
Finally, our recovery of the exact selectors uses Girsanov's theorem
and properties of Brownian record times.
The use of a change of measure to remove drift when studying
collisions and avoidance of singular sets has precedents in
Ichiba and Karatzas~\cite{ichiba2010collisions} and
Sarantsev~\cite{sarantsev2015triple}.
We use this approach to identify the latest personal-best records
and to prove that ties for the global best occur only on a set
of times of Lebesgue measure zero.

\subsection*{Outline} 
The paper is organized as follows. Section~\ref{s:PSO} introduces the discrete PSO algorithm and the main assumptions,. Section~\ref{s:continuous-time} introduces the continuous-time limit as a stochastic differential inclusion and discusses the noiseless regime. Section~\ref{s:proof} proves the stochastic differential inclusion result, while Section~\ref{s:girsanov} shows that, under nondegenerate noise, the exact personal- and global-best selectors are recovered. Technical results are collected in Section~\ref{s:technical}.

\subsection*{Notation}
For twice differentiable $f:\bR^d\to\bR$, we denote its gradient
and Hessian by $\nabla f$ and $\nabla^2 f$, and set
$\crit f\eqdef\{x:\nabla f(x)=0\}$.
For $p\geq1$, let $\cC_p\eqdef C(\bR_+;\bR^p)$ and
$\cD_p\eqdef D(\bR_+;\bR^p)$, endowed respectively with locally
uniform convergence and the Skorokhod $J_1$ topology.
On $[0,T]$, a metric inducing $J_1$ is
\[
d_{J_1,T}(x,y)
\eqdef\inf_{\lambda\in\Lambda_T}
\left(
\|\lambda-\mathrm{id}\|_{\infty,T}
\vee\|x-y\circ\lambda\|_{\infty,T}
\right),
\]
where $\Lambda_T$ is the set of increasing homeomorphisms of
$[0,T]$ onto itself and
$\|z\|_{\infty,T}\eqdef\sup_{t\leq T}\|z_t\|$.
Convergence $x_n\to x$ in $\cD_p$ means
$d_{J_1,T}(x_n,x)\to0$ for every continuity point $T>0$ of $x$.
Both spaces carry their Borel $\sigma$-fields.

\section{The PSO Algorithm}\label{s:PSO}

Let $f:\bR^d\to \bR$ be a function.
Let $n\geq 1$ be the number of particles. On some probability space, and for each $\gamma>0$,
we introduce the random vector $\bs x_0^\gamma = (x_0^{1,\gamma},\dots,x_0^{n,\gamma})$  on $(\bR^{d})^n$, representing the initial particles positions.
We also consider the process $(\bs \xi_k)_{k\geq 1}$, given by $\bs \xi_k= (\xi_k^{1},\dots,\xi_k^{n})$ on~$(\bR^{d})^n$, and a noise parameter $\sigma \geq 0$. 
\begin{assumption} 
\label{hyp:stab} The following statements hold.
\begin{enumerate}[label=\roman*)]
\item \label{hyp:girsanov-free} The function $f$, defined on $\bR^d\to\bR$, 
is twice continuously differentiable and coercive;
\item $d\geq 2$ and $f$ has isolated critical points.
\end{enumerate}
\end{assumption}

\begin{assumption}
\label{hyp:random} The following conditions hold.
\begin{enumerate}[label=\roman*)]
\item \label{hyp:girsanov-free-1} The random variables $(\bs x_0^\gamma)_{\gamma>0}$ have the same distribution $\mu_0$, which is compactly supported;
\item \label{hyp:girsanov-free-2} $(\bs \xi_k)_{k\geq 1}$ are iid standard Gaussian vectors in $(\bR^{d})^n$, independent from $(\bs x_0^\gamma)_{\gamma>0}$ ;
\item $\mu_0$ is absolutely continuous w.r.t. Lebesgue measure on $\bR^{dn}$ ;
\item $\sigma>0$.
\end{enumerate}
\end{assumption}
The coercivity of $f$ and the uniform compact support assumption on the
initial particles ensure that
the personal bests and global bests remain in a deterministic compact
set. The smoothness assumption on $f$, along with the condition on the isolated critical
points and the law of the initial particles, are used to rule out pathological behaviors of the record times,
and to guarantee the uniqueness of the global best (see Prop.~\ref{prop:unique-best-discrete}).
The Gaussian assumption on $\xi_k$ is mainly for convenience.

Let $\beta\in [0,1]$ be fixed, and set $\bar \beta\eqdef 1-\beta$.   
For every $\gamma>0$, starting from the initial position $\bs x_0^\gamma$, we recursively
define the processes $(\bs x_k^\gamma)_{k\geq0}$ and
$(\bs y_k^\gamma)_{k\geq0}$, taking values in $\bR^{dn}$, together
with the process $(\theta_k^\gamma)_{k\geq0}$, taking values in
$\bR^d$, through the iterations:
\begin{equation}
  \label{eq:algo}
  \begin{aligned}
&y_k^i
=
x_{\kappa_k^i}^i,
\qquad
\kappa_k^i
\eqdef
\max \arg\min_{0\leq l\leq k} f(x_l^i), \qquad i\in [n], \\
& \theta_k = y_k^{i_k^*},\qquad i^*_k = \min \arg\min_{j\in[n]} f(y_k^j) , \\
&x_{k+1}^i =   x_k^{i}   + \gamma \beta \bigl(\theta_k - x_k^i\bigr) + \gamma \bar \beta \bigl( y_k^{i} - x_k^i\bigr)
    + \sqrt{\gamma} \sigma\,\xi_{k+1}^{i}\,,    \qquad i\in [n].
  \end{aligned}
\end{equation}
Here,  $y_k^{i}$ represents the position of the personal best of a particle $i\in [n]$.
It is defined as the \textit{last} position of the particle $i$, at which this particle attained its personal record.
The random integer~$i^*_k$ represents the index of the global best particle at iteration $k$.
The vector $\theta_k = y_k^{i_k^*}$ is the global best, defined as the element within the set of personal bests positions $y_k^1,\dots, y_k^n$,
which achieves the best value of $f$.
In case of a tie, the particle with the smallest index is selected.
However, Prop.~\ref{prop:unique-best-discrete}
reveals that this tie-breaking rule is generally irrelevant when $\sigma > 0$, as the particle achieving the global record is almost always unique thanks to the Brownian noise.
 \begin{prop} \label{prop:unique-best-discrete}
   Under Assumptions~\ref{hyp:stab},~\ref{hyp:random}, $f(y_k^j)\neq f(y_k^i)$ almost surely for every $k\geq 0$, $i\neq j$.
   In particular, $\arg\min_{j\in[n]} f(y_k^j)$ is a singleton, with probability one.
\end{prop}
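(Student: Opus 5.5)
Fix $k\geq0$ and $i\neq j$. Since $y_k^i=x^i_{\kappa_k^i}$ and $y_k^j=x^j_{\kappa_k^j}$ with $\kappa_k^i,\kappa_k^j\in\{0,\dots,k\}$, the event $\{f(y_k^i)=f(y_k^j)\}$ is contained in
\[
\bigcup_{0\leq l,m\leq k}\bigl\{f(x_l^i)=f(x_m^j)\bigr\}.
\]
It is a finite union, so it suffices to show $\bP(f(x_l^i)=f(x_m^j))=0$ for all fixed $l,m$. The plan is to show that the random vector $(x_l^i,x_m^j)\in\bR^{2d}$ has a density, i.e.\ its law is absolutely continuous w.r.t.\ Lebesgue measure. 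Then we show that $Z\eqdef\{(u,v):f(u)=f(v)\}$ is Lebesgue-null in $\bR^{2d}$.

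For the density, argue by induction on $k$ that $\bs x_k$ has a law absolutely continuous on $\bR^{dn}$. At $k=0$ this is Assumption~\ref{hyp:random}. For the step, condition on $\cF_k=\sigma(\bs x_0,\bs\xi_1,\dots,\bs\xi_k)$. Given $\cF_k$, the drift part of $\bs x_{k+1}$ is $\cF_k$-measurable. When $\sigma>0$ and $\gamma>0$, the added term $\sqrt\gamma\sigma\bs\xi_{k+1}$ is a nondegenerate Gaussian independent of $\cF_k$. So the conditional law of $\bs x_{k+1}$ has a Gaussian density, and hence so does the unconditional law. The joint law of the whole vector $(\bs x_0,\dots,\bs x_k)$ is also absolutely continuous on $\bR^{dn(k+1)}$. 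This follows by the same conditioning: each new block is Gaussian given the past, so the joint density is $\mu_0$'s density times Gaussian kernels. A coordinate projection of an absolutely continuous law is absolutely continuous. This gives a density for $(x_l^i,x_m^j)$ when $i\neq j$. Note that if $l=m=0$ we need $\mu_0$ absolutely continuous on the product, which is exactly the assumption.

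For the null set, use Assumption~\ref{hyp:stab}. Consider $F(u,v)=f(u)-f(v)$, which is $C^2$ on $\bR^{2d}$. Its gradient is $(\nabla f(u),-\nabla f(v))$, which vanishes only on $\crit f\times\crit f$. Since the critical points are isolated, $\crit f$ is countable, so this set is countable and hence null. On the complement, $\{F=0\}$ is a $C^1$ hypersurface by the implicit function theorem, hence Lebesgue-null. Covering by countably many charts gives $|Z|=0$. Combined with the density, we get $\bP((x_l^i,x_m^j)\in Z)=0$.

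The main obstacle is only bookkeeping in the density step: $\kappa_k^i$ is random and depends on the trajectory, which is why I pass to the finite union over deterministic indices rather than studying $y_k^i$ directly. With that reduction the argument is routine. Uniqueness of the argmin then follows by intersecting over the finitely many pairs $i\neq j$.
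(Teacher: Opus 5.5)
Your proposal is correct and follows essentially the same route as the paper. Both reduce to the finite union of events over deterministic index pairs $(l,m)$, obtain a joint density for $(x_l^i,x_m^j)$ by induction using the nondegenerate Gaussian increments and $\mu_0\ll\mathrm{Leb}$, and show that $\{(u,v):f(u)=f(v)\}$ is Lebesgue-null via countability of $\crit f\times\crit f$ together with the implicit function theorem. You additionally spell out the explicit density factorization and the countable chart covering, which the paper leaves implicit.
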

\begin{proof}[Proof sketch]
  Let $k\geq 0$ and $i\neq j$. The event $f(y^i_k) = f(y^j_k)$ is included in the union of events $\{f(x_l^i) = f(x_m^j)\}$ over $l,m\leq k$.
  We can show that the random vector $(x^i_m, x^j_l)$ has a density on $\bR^{2d}$, while the set $\{(x,x^\prime)\in \bR^{2d}:f(x) = f(x^\prime)\}$
  has zero Lebesgue measure. Hence, the event $f(y^i_k) = f(y^j_k)$ has zero probability.
  For the complete proof see Section~\ref{s:technical}.
\end{proof}

\section{Continuous-time dynamics}
\label{s:continuous-time}
\subsection{Convergence to weak solutions}

We now consider trajectories in continuous-time. 
For a continuous function $x\in \cC_d$ and $t\geq0$, define
$\tau_t(x)
\eqdef
\max \arg\min_{0\leq s\leq t} f(x_s)$,
as the last time before $t$ when $x$ reached its best record.
The map
\begin{equation}
  \label{eq:pbest}
  \begin{aligned}
  \pbest: \cC_d &\to\cD_d  \\
  x &\mapsto (x_{\tau_t(x)})_{t\geq 0}
\end{aligned}
\end{equation}
is the continuous-time personal best selector,
which to a given continuous trajectory $x$, associates the positions of last record.
\begin{lemma}\label{lem:pbest-well-defined}
  The map $\pbest$ is well defined on $\cC_d\to\cD_d$, measurable, and non-anticipative.
\end{lemma}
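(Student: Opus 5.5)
We will check the three properties in turn, working with the running minimum $m_t(x)\eqdef\min_{0\le s\le t} f(x_s)$. Since $x$ is continuous and $f$ is continuous, $m_t$ is continuous and nonincreasing in $t$.

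\emph{Well-definedness and path regularity.} For fixed $t$, the set $A_t\eqdef\{s\in[0,t]: f(x_s)=m_t\}$ is nonempty and closed, as the preimage of a point under a continuous map on a compact set. Hence $\tau_t=\max A_t$ exists, and $\pbest(x)_t=x_{\tau_t}$ is defined for every $t$. We then show that $t\mapsto\tau_t$ is nondecreasing and càdlàg.

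\emph{Monotonicity.} Let $t<t'$. If $m_{t'}<m_t$, then $A_{t'}\subset(t,t']$, so $\tau_{t'}>t\ge \tau_t$. Otherwise $m_{t'}=m_t$, so $A_t\subset A_{t'}$ and $\tau_{t'}\ge\tau_t$.

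\emph{Right-continuity.} Let $t_k\downarrow t$. By monotonicity $\tau_{t_k}$ decreases to some limit $\tau^+\ge \tau_t$. The points $\tau_{t_k}\in[0,t_k]$ satisfy $f(x_{\tau_{t_k}})=m_{t_k}\to m_t$. By continuity, $\tau^+\in[0,t]$ and $f(x_{\tau^+})=m_t$, so $\tau^+\in A_t$ and therefore $\tau^+\le\tau_t$. Thus $\tau^+=\tau_t$.

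\emph{Left limits.} These exist by monotonicity.

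Composing with the continuous $x$, $t\mapsto x_{\tau_t}$ is right-continuous with left limits $x_{\tau_{t-}}$. So $\pbest(x)\in\cD_d$.

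\emph{Non-anticipativity.} By definition, $\tau_t$ and $x_{\tau_t}$ depend only on $(x_s)_{s\le t}$. Hence, if $x$ and $x'$ coincide on $[0,t]$, then $\pbest(x)$ and $\pbest(x')$ coincide on $[0,t]$. In terms of filtrations, we will deduce this from the measurability argument below applied to the stopped path $x_{\cdot\wedge t}$, since $\pbest(x)_s=\pbest(x_{\cdot\wedge t})_s$ for $s\le t$.

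\emph{Measurability.} This is the main technical point, because $x\mapsto\tau_t(x)$ is not continuous. The Borel $\sigma$-field on $\cD_d$ is generated by the coordinate maps $y\mapsto y_t$. It therefore suffices to show that $x\mapsto x_{\tau_t(x)}$ is Borel on $\cC_d$ for each fixed $t$.

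We first show $x\mapsto\tau_t(x)$ is Borel by showing that $\{\tau_t\ge r\}$ is Borel for each $r\in[0,t]$. For $r\in(0,t]$,
\[
\{\tau_t\ge r\}=\Bigl\{\min_{s\in[r,t]}f(x_s)\le \min_{s\in[0,r]}f(x_s)\Bigr\}.
\]
If $\tau_t\ge r$, the global minimum over $[0,t]$ is attained in $[r,t]$, which gives the inequality. Conversely, if the inequality holds, $\min_{[0,t]} f(x_s)$ is attained at some point of $[r,t]$, so $A_t\cap[r,t]\neq\emptyset$. For $r=0$ the event is all of $\cC_d$.

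Each map $x\mapsto\min_{s\in[a,b]}f(x_s)$ is continuous for the locally uniform topology. Hence $\{\tau_t\ge r\}$ is closed, and $\tau_t$ is Borel; in fact it is upper semicontinuous. Finally, $x\mapsto x_{\tau_t(x)}$ is the composition of the Borel map $x\mapsto(x,\tau_t(x))$ with the jointly continuous evaluation map $(x,s)\mapsto x_s$ on $\cC_d\times[0,t]$. It is therefore Borel, which concludes the proof.
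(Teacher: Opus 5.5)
Your proof is correct and follows the same route as the paper. Monotonicity and right-continuity of $t\mapsto\tau_t(x)$ give càdlàg paths, and measurability is reduced to the coordinate maps and to the semicontinuity of $\tau_t$; your closed set $\{\tau_t\ge r\}$ is exactly the complement of the paper's open set $\{\tau_t<c\}$. The differences are minor: you prove right-continuity by a limit-point argument in the closed set $A_t$ instead of the paper's case split on whether $f(x_t)=\min_{s\in[0,t]}f(x_s)$, and you make explicit two steps the paper uses without comment, namely the monotonicity of $\tau_t$ and the composition with the jointly continuous evaluation map.
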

\begin{proof}
  See Section~\ref{s:technical}.
\end{proof}
For $\bs x \in \cC_{dn}$, we use the notation $\pbest(\bs x)\eqdef (\pbest(x^1),\dots,\pbest(x^n))$ to designate the componentwise application of $\pbest$.

We also define the map $J : \bR^{dn}\to [n]$ as
$J(\bs y) = \min \arg\min_{i\in [n]}f(y^i)$ for every $\bs y=(y^1,\dots, y^n)$.
The map
\begin{eqnarray*}
  \gbest: \bR^{dn} &\to&\bR^d \nonumber \\
  \bs y &\mapsto& y^{J(\bs y)} \nonumber
\end{eqnarray*}
is the global best selector. It is a Borel map.
Note however that the function $t\mapsto \gbest(\bs y_t)$ is not in general a càdlàg function, even if $\bs y$ is càdlàg.
As in the discrete scenario, the definition of $\gbest$ uses an arbitrary tie-breaking rule, in the case where there are several best particles at a given instant. As will be clarified in Th.~\ref{th:continuous-time-limit}, this tie-breaking rule is in fact irrelevant when $\sigma > 0$, as there is almost always a unique particle achieving the global best.

Consider the following system of equations, which describe the continuous-time PSO dynamics.
\begin{equation}
\label{eq:continuous-time}
\begin{aligned}
Y_t^i
&=
\pbest(X^i)_t
=
X^i_{\tau_t(X^i)},
\qquad i\in[n],
\\
dX_t^i
&=
\left(
\beta\,\gbest(\bs Y_t)
+\bar\beta Y_t^i-X_t^i
\right)dt
+\sigma\,dB_t^i,
\qquad i\in[n].
\end{aligned}
\end{equation}

\begin{definition}[Weak solution]
\label{def:weak-solution}
Let $\mu_0\in\cP(\bR^{dn})$.
A weak solution of~\eqref{eq:continuous-time} with initial
distribution $\mu_0$ and parameters $(n,\sigma,\beta)$ consists of
\begin{itemize}
\item a filtered probability space
$(\Omega,\cF,(\cF_t)_{t\geq0},\bP)$;

\item an $(\cF_t)_{t\geq0}$-adapted process
$\bs X$ with paths in $\cC_{dn}$;

\item a standard $dn$-dimensional
$(\cF_t)_{t\geq0}$-Brownian motion $\bs B = (B^1,\dots, B^n)$;
\end{itemize}
such that $\bs X_0$ has the distribution $\mu_0$, and, setting
$\bs Y=\pbest(\bs X)$,
one has, almost surely, for every $t\geq0$ and every $i\in[n]$,
\[
X_t^i
=
X_0^i
+
\int_0^t
\left(
\beta\,\gbest(\bs Y_s)
+\bar\beta Y_s^i-X_s^i
\right)\,ds
+
\sigma B_t^i.
\]
\end{definition}
Our goal is to show that the law of such a solution arises as a weak limit of the laws of interpolated discrete-time PSO processes as the step size $\gamma$ tends to zero. To that end, it is convenient to introduce the concept of weak solution law.
\begin{definition}[Weak solution law]
Let $\mu_0\in\cP(\bR^{dn})$. A probability measure $\nu\in\mathcal P(\mathcal C_{dn}\times\mathcal (\cD_d)^n)$
is called a weak solution law of~\eqref{eq:continuous-time} with initial distribution $\mu_0$,
if there exists a weak solution in the sense of Def.~\ref{def:weak-solution} such that
$(\mathbf X,\pbest(\bs X))$ has the law $\nu$.
\end{definition}
Existence of a weak solution with initial distribution
$\mu_0$ could be proven using Girsanov's theorem \cite[Th.~3.5.1]{shreve1991brownian}, together with~\cite[Lem.~2.4]{ren2020comparison}.
The main issue here, however, is not the abstract existence of such a
solution, but the fact that it correctly describes the small-step-size
behavior of the discrete algorithm, as established by the weak
convergence result below.

\paragraph{Continuous-time embeddings.} We embed the discrete-time algorithm into a continuous-time
framework. For every $\gamma>0$, we define the continuous-time interpolation of the
particle system defined in \eqref{eq:algo} by
\begin{equation}
X_t^{i,\gamma}
=
x_k^{i,\gamma}
+
\left(\frac{t}{\gamma}-k\right)
\bigl(x_{k+1}^{i,\gamma}-x_k^{i,\gamma}\bigr),
\qquad
t\in[k\gamma,(k+1)\gamma),
\label{eq:Xgamma}
\end{equation}
and the piecewise-constant interpolation of the personal best process by
\begin{equation}\label{eq:Ygamma}
Y_t^{i,\gamma}
=
y_k^{i,\gamma},
\qquad
t\in[k\gamma,(k+1)\gamma).
\end{equation}
We write $\mathbf X_t^\gamma
=
(X_t^{1,\gamma},\dots,X_t^{n,\gamma}),
$ and $\bs Y_t^\gamma
=
(Y_t^{1,\gamma},\dots,Y_t^{n,\gamma})$.
Thus, $\mathbf X^\gamma$ is a random variable with values in $\cC_{dn}$,
whereas $\bs Y^\gamma$ is a random variable with values in $(\cD_d)^n$. We now state our main result.

\begin{theorem}[Continuous-time limit]
\label{th:continuous-time-limit}
Under Assumptions~\ref{hyp:stab},~\ref{hyp:random}, the family of r.v.
$\bigl(\mathbf X^\gamma,\bs Y^\gamma\bigr)_{\gamma>0}$ is tight on $\cC_{dn}\times(\cD_d)^n$,
and each limit point as $\gamma \downarrow 0$ is a weak solution law to (\ref{eq:continuous-time}).
\end{theorem}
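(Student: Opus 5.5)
The plan has two stages, mirroring the introduction. First we establish tightness and the relaxed characterisation of limit points (the content of Theorem~\ref{th:sdi}). Then we use the Brownian noise, via Girsanov's theorem, to collapse the relaxed selectors onto $\pbest$ and $\gbest$.

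\textbf{Tightness and identification of the stochastic integral.} By coercivity of $f$ and the compact support of $\mu_0$, every $y_k^{i,\gamma}$ and $\theta_k^\gamma$ lies in the deterministic compact sublevel set $K=\{f\le \max_{\supp\mu_0} f\}$. The drift in~\eqref{eq:algo} is therefore affine in $x_k^i$ with a bounded inhomogeneous part. A discrete Gronwall estimate combined with Gaussian moment bounds and Kolmogorov's criterion then gives tightness of $\mathbf X^\gamma$ in $\cC_{dn}$.

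Tightness of $\bs Y^\gamma$ in $(\cD_d)^n$ is more delicate, because the record positions jump. We would control the $J_1$ modulus using the fact that each $f(Y^{i,\gamma}_t)$ is nonincreasing and bounded. Any jump of $Y^{i,\gamma}$ is to a point $X^{i,\gamma}_t$ with $f(X^{i,\gamma}_t)\le f(Y^{i,\gamma}_{t-})$. Hence two jumps of size $\ge\eta$ occurring within time $\delta$ force either a macroscopic oscillation of $X^{i,\gamma}$ or a decrease of $f\circ Y^{i,\gamma}$ by an amount bounded below through the modulus of $f$. If this direct route fails, we would fall back on tightness of the pair $(f\circ Y^{i,\gamma}, Y^{i,\gamma})$ via Aldous' criterion.

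Along a convergent subsequence we use Skorokhod representation. Writing $\Theta^\gamma_t=\theta^\gamma_{\lfloor t/\gamma\rfloor}$, which is bounded, we extract a further weak-$L^2_{loc}$ limit $\Theta$ of $\Theta^\gamma$. The martingale $M^\gamma_t=\mathbf X^\gamma_t-\mathbf X^\gamma_0-\int_0^t(\text{drift})$ converges to a continuous martingale with quadratic variation $\sigma^2 t\,I$, which Lévy's characterisation identifies as $\sigma\bs B$. This gives the integral equation with $\Theta$ in place of $\gbest(\bs Y_s)$.

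\textbf{Relaxed selectors.} Passing to the limit in the discrete record property gives $Y^i\in\mathcal R_f(X^i)$. This needs the closed-graph property of $\mathcal R_f$, which we would prove directly from the continuity of $f$. For the global best, we show that at every continuity time $t$ of $\bs Y$, the selector $\theta^\gamma$ eventually takes values only near the $Y^j_t$ with $j\in\argmin_j f(Y^j_t)$. Weak limits of such selectors lie in the convex hull of these points, by the relaxed-control argument for convex combinations. This yields $\Theta_t\in\conv\{Y^i_t:i\in\argmin_j f(Y^j_t)\}$ for almost every $t$.

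\textbf{Recovering the exact selectors (main obstacle).} The drift is bounded on compact time intervals, so Girsanov's theorem gives a measure $\bQ\sim\bP$ on $[0,T]$ under which $\bs X$ is $\bs X_0+\sigma\times$(Brownian motion). Since $\bQ$ and $\bP$ are equivalent, it suffices to prove the relevant events are null under $\bQ$.

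\emph{Personal best.} We need that, a.s., for every $t$ the set $\argmin_{s\le t}f(X^i_s)$ is a singleton, or at least that ties between distinct record times never occur in a way that $\mathcal R_f$ could exploit. Then the consistency built into $\mathcal R_f$ forces $Y^i=\pbest(X^i)$. For Brownian motion composed with a $C^2$ function, $f(X^i)$ is a semimartingale with nondegenerate quadratic variation away from $\crit f$. The conditions $d\ge2$ and isolated critical points guarantee that Brownian motion a.s. never hits $\crit f$ for $t>0$, and $\mu_0$ is absolutely continuous so this also holds at $t=0$. Local minima of a nondegenerate one-dimensional continuous semimartingale have distinct values, by the classical Brownian argument applied over rational intervals together with a time change. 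This makes the record times unique.

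\emph{Global best.} We must show that $\{t: f(Y^i_t)=f(Y^j_t)\}$ is Lebesgue-null for $i\ne j$. Under $\bQ$ the particles $X^i$ and $X^j$ are independent. Whenever $f(Y^i_t)=f(Y^j_t)$ at a time when, say, particle $i$ is setting a new record, the value $f(X^i_t)$ equals a level determined by the independent path $j$. By Fubini, and because the one-dimensional marginals of $f(X^i_t)$ have no atoms (again using that $\nabla f\ne0$ a.s.), this set has measure zero. At times when neither particle is at a record, both $f(Y^i)$ and $f(Y^j)$ are frozen at record values attained at distinct random times. Equality of these values has zero probability, by the same density argument as in Prop.~\ref{prop:unique-best-discrete}, applied countably often over the record levels.

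Hence the convex hull reduces a.e. to the single point $\gbest(\bs Y_t)$, so $\Theta_t=\gbest(\bs Y_t)$ for almost every $t$, and the limit is a weak solution law. We expect this last step, namely the rigorous Lebesgue-null tie statement (especially the local-minimum values of $f(X^i)$ versus those of $f(X^j)$), to be the hardest part.
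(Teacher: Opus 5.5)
Your tightness and relaxed-limit stages essentially follow the paper's route to Theorem~\ref{th:sdi}. The paper carries $A^\gamma=\int_0^\cdot\Theta^\gamma_s\,ds$ in $\cC_d$ instead of extracting weak-$L^2$ limits, and it localizes Girsanov with $\rho_m$, because the drift contains $-X_t$ and is only pathwise bounded. The genuine gap is in recovering the personal best. You rely on $\arg\min_{s\le t}f(X^i_s)$ being a singleton for every $t$, and this fails almost surely. Take $q$ inside an excursion of $f(X^i)$ above its running minimum, so $f(X^i_q)>m_q:=\min_{s\le q}f(X^i_s)$. Let $u<q$ be a time where $m_q$ is attained, and let $D_q:=\inf\{r>q: f(X^i_r)\le m_q\}$ be the end of that excursion. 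Then $f(X^i_u)=f(X^i_{D_q})=\min_{[0,D_q]}f(X^i)$, so the argmin over $[0,D_q]$ contains both $u$ and $D_q$. Such excursion endpoints exist a.s. Distinctness of local-minimum values does not exclude this tie, because $D_q$ is not a local minimum of $f(X^i)$: it is where a new record is about to start.

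At $t=D_q$, condition iv) is satisfied pointwise both by keeping $Y_t=X^i_u$ and by switching to $Y_t=X^i_t$; this tie is why $\pbest$ needs a $\max$. What selects $X^i_t$ is the right-continuity of $Y$, combined with the fact that $f(X^i)$ drops strictly below $f(X^i_t)$ immediately after the stopping time $D_q$. That is the paper's Lemma~\ref{lem:infinite-crossing}. After a Taylor expansion, it applies the law of the iterated logarithm to $\nabla f(X_\tau)^\top(B_{\tau+h}-B_\tau)$ at a stopping time $\tau$, using the strong Markov property and $\nabla f(X_\tau)\neq0$. Prop.~\ref{prop:pbest} applies it at the countably many stopping times $t^\delta_j$ to get $Y_t=X_t$ at every record time; condition iv) then gives $Y=\pbest(X)$. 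Your proposal has no argument of this kind.

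The same missing lemma undermines your global-best step. After Fubini in $t$, and independence of the driving Brownian motions under $\bQ$ (conditionally on the initial positions), you need one fact. For each \emph{fixed} $t$, the running minimum $\min_{s\le t}f(X^i_s)$ must have an atomless law. Atomlessness of $f(X^i_t)$ does not give this. The density argument of Prop.~\ref{prop:unique-best-discrete} cannot be applied ``countably often over the record levels'' either, since the minimizing times are random and the levels form a continuum. The paper closes this using the hitting time $T_c$ of $\{f\le c\}$. On $\{\min_{[0,t]}f(X^i)=c\}$, either $T_c=t$, which forces $f(X^i_t)=c$; this is null because the law of $X^i_t$ is absolutely continuous and non-critical level sets are Lebesgue-null. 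Or $T_c<t$, and immediate improvement at the stopping time $T_c$ contradicts the minimum being $c$. With this in place, your split between record and non-record times is unnecessary.
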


\subsection{Sketch of the proof}

Let $\bs X^\gamma$ and $\bs Y^\gamma$ denote the interpolated processes
defined in Eqs.~\eqref{eq:Xgamma} and~\eqref{eq:Ygamma}. Let us also introduce the piecewise-constant embedding of the discrete global best $\theta_k^\gamma$ as
\begin{equation}\label{eq:Thetagamma}
\Theta_t^\gamma=\theta_k^\gamma,
\qquad
t\in[k\gamma,(k+1)\gamma).
\end{equation}
Let us first point out a difficulty. A natural approach would consist
in proving the tightness of the triplet
$(\bs X^\gamma,\bs Y^\gamma,\Theta^\gamma)$ as $\gamma \downarrow 0$ on
$\cC_{dn}\times(\cD_d)^n\times\cD_d$, and then showing that its limit
points have the form $(\bs X,\bs Y,\Theta)$, where
$\bs Y=\pbest(\bs X)$ and
$\Theta_t=\gbest(\bs Y_t)$. This approach fails. On the one hand,
$(\bs X^\gamma,\bs Y^\gamma)$ converges in law along subsequences as
$\gamma\downarrow0$ to a limit $(\bs X,\bs Y)$; see
Prop.~\ref{prop:tightness-gamma}. On the other hand, the family
$(\Theta^\gamma)_\gamma$ is not tight in general, as the global best position could switch infinitely often between several particles with close objective value.
Instead, we consider the integral process
\begin{equation}\label{eq:def_Atgamma}
A_t^\gamma\eqdef\int_0^t\Theta_s^\gamma\,ds,
\end{equation}
which is a random element of $\cC_d$. We show that the family $(A^\gamma)_{\gamma >0}$
is tight as $\gamma\downarrow0$, and therefore converges in law along
subsequences to a process $A$ whose sample paths are absolutely
continuous. We then define the limiting global-best $\Theta$
as an almost-everywhere derivative of $A$.

Thus, unlike $\bs X$ and $\bs Y$, which are obtained as weak limits of
$\bs X^\gamma$ and $\bs Y^\gamma$, respectively, $\Theta$ is not a
weak limit of $\Theta^\gamma$. It is only through their respective
integrals that the discrete and limiting global-best processes are
related.

To understand the consequence of this fact, define, for
$\bs y=(y^1,\dots,y^n)$,
\begin{equation}
F(\bs y)
\eqdef
\left\{
y^i:\ i\in\arg\min_{j\in[n]}f(y^j)
\right\}.\label{eq:F}
\end{equation}
At the discrete level,
$\Theta_t^\gamma\in F(\bs Y_t^\gamma)$ for every $t$, but this
pointwise inclusion cannot be passed directly to the limit. Indeed,
difference quotients of $A$ arise as limits of local averages
\[
\frac{1}{2h}\int_{t-h}^{t+h}\Theta_s^\gamma\,ds,
\]
which may combine several values of $F(\bs Y_s^\gamma)$ and therefore
naturally belong to their convex hull. This is established in
Prop.~\ref{prop:limit-global-best}, which shows that
$\Theta_t\in\conv F(\bs Y_t)$ for Lebesgue-almost every $t>0$.

A second difficulty concerns the identification of $\bs Y$ from
$\bs X$. At the discrete level, $Y^{i,\gamma}$ is the personal best of
$X^{i,\gamma}$ observed at the grid points. This property cannot be
passed directly to the limit, notably because the selector $\pbest$ is
not continuous from $\cC_d$ to $\cD_d$.
Weak convergence therefore only guarantees that $(X^i,Y^i)$ belongs
almost surely to the closure of the graph of $\pbest$. Rather than
characterizing this closure directly, we identify a sufficiently
sharp closed enlargement of the graph. It retains two properties of
the personal-best process: $Y_t^i$ must be a record point of the
  trajectory of $X^i$, and the stored record must be temporally
consistent. More precisely, between two times $s<t$, it can only be
retained at $Y_s^i$ or replaced by a minimizing point visited by
$X^i$ during $(s,t]$. This prevents ``jumps'' to arbitrary old records. We provide this result in Prop.~\ref{prop:limit-personal-best}.
Putting all pieces together, we obtain the following theorem.
\begin{theorem}[Stochastic differential inclusion]
\label{th:sdi}
Under Assumption~\ref{hyp:stab}\ref{hyp:girsanov-free} and
Assumption~\ref{hyp:random}\ref{hyp:girsanov-free-1}--%
\ref{hyp:girsanov-free-2}, the family
$\bigl(\bs X^\gamma,\bs Y^\gamma\bigr)_{\gamma>0}$
is tight on $\cC_{dn}\times(\cD_d)^n$.
Let $\nu$ be any weak limit point of this family as $\gamma \downarrow 0$. There exist a
filtered probability space
$(\Omega,\cF,(\cF_t)_{t\geq0},\bP)$, an adapted process
$(\bs X,\bs Y)$ with law $\nu$, a progressively measurable process
$\Theta$ with values in $\bR^d$, and a standard $dn$-dimensional
$(\cF_t)$-Brownian motion
$\bs B=(B^1,\dots,B^n)$ such that the following properties hold.
\begin{enumerate}[label=\roman*)]
\item $\operatorname{Law}(\bs X_0)=\mu_0$, $\bs Y_0=\bs X_0$ a.s;
\item almost surely, for every $t\geq0$ and every $i\in[n]$,
\[
X_t^i
=
X_0^i
+
\int_0^t
\left(
\beta\Theta_s+\bar\beta Y_s^i-X_s^i
\right)\,ds
+
\sigma B_t^i,
\qquad t\geq0,
\]
\item $\Theta_t\in\conv F(\bs Y_t)$, $\bP\otimes dt\text{-a.e.}$;
\item with probability one, for every $i\in[n]$,
\[
\forall t\geq0,\ \forall s\in[0,t),\qquad
Y_t^i
\in
\arg\min
\left\{
f(u):
u\in\{Y_s^i\}\cup X^i([s,t])
\right\}.
\]
\end{enumerate}
\end{theorem}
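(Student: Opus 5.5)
The proof proceeds in four steps: a priori bounds and tightness, passage to the limit in the drift equation, the convexified global-best constraint, and the relaxed record condition.

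\textbf{Step 1: bounds and tightness.} By coercivity of $f$ and compact support of $\mu_0$, every personal best satisfies $f(y_k^{i,\gamma})\le f(x_0^{i,\gamma})\le \max_{\supp\mu_0} f$. Hence all $y_k^{i,\gamma}$ and $\theta_k^\gamma$ lie in a deterministic compact set $K$, and the drift $\beta\theta_k+\bar\beta y_k^i-x_k^i$ is affine in $x_k^i$ with bounded inhomogeneous part. A discrete Gronwall argument together with Doob's inequality for the martingale $\sqrt\gamma\sigma\sum_{k}\xi_{k}$ gives moment bounds on $\sup_{t\le T}\|X_t^\gamma\|$, and Kolmogorov-type increment bounds give tightness of $\bs X^\gamma$ in $\cC_{dn}$. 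The process $A^\gamma$ is $\sup_K\|\cdot\|$-Lipschitz, so it is tight by Arzel\`a--Ascoli. Tightness of $\bs Y^\gamma$ in $J_1$ is the delicate part. I would use the monotonicity of $t\mapsto f(Y^{i,\gamma}_t)$: the record value process is nonincreasing and bounded. Each jump of $Y^{i,\gamma}$ moves to a current position $X^{i,\gamma}_t$, so the $J_1$ modulus of $Y^{i,\gamma}$ can be controlled by the modulus of continuity of $X^{i,\gamma}$ together with the monotone value process. Concretely, $\|Y_t-Y_s\|$ is small unless a new record is set far from $Y_s$. I would verify Aldous' criterion, or a direct $w'$-modulus bound, using the fact that between successive large jumps the record value must decrease by an amount bounded below (by compactness and continuity of $f$) or else the new record point is close to the old one. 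This is the step I expect to be the main technical obstacle. If the direct argument fails, I would fall back on working with the closed-graph set $\{(x,y): y\in\mathcal R_f(x)\}$ and a compactness argument for it.

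\textbf{Step 2: passage to the limit in the equation.} Take a jointly convergent subsequence $(\bs X^\gamma,\bs Y^\gamma,A^\gamma,\bs M^\gamma)\Rightarrow(\bs X,\bs Y,A,\bs M)$, where $\bs M^\gamma$ is the interpolated noise martingale. By Skorokhod representation, the convergence can be taken to be almost sure. Writing
\[
X_t^{i,\gamma}=X_0^{i,\gamma}+\beta A_t^\gamma+\int_0^t(\bar\beta Y_s^{i,\gamma}-X_s^{i,\gamma})\,ds+\sigma M_t^{i,\gamma}+o(1),
\]
and using that $J_1$ convergence implies convergence of Lebesgue integrals, the limit satisfies the same identity. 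The limit $\bs M$ is a continuous martingale with respect to the filtration generated by $(\bs X,\bs Y,A,\bs M)$, with quadratic variation $tI$; this follows from the standard test-function argument. L\'evy's characterization then shows that $\bs M=\bs B$ is a Brownian motion. Since $A$ is Lipschitz, we set $\Theta_t=\limsup_h (A_t-A_{t-h})/h$ componentwise; this is progressively measurable and $A=\int\Theta$. This gives (i) and (ii); note that $\bs Y_0=\bs X_0$ holds because $y_0=x_0$.

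\textbf{Step 3: the convexified global-best constraint (iii).} The graph of $F$ is closed. Indeed, if $\bs y_m\to\bs y$ and $i$ is a minimizer of $f(y^j_m)$ for infinitely many $m$, then $i$ is a minimizer of $f(y^j)$. Fix $\omega$ and a time $t$ that is a continuity point of $\bs Y$ and a differentiability point of $A$. Locally uniformly around $t$, $\Theta^\gamma_s\in F(\bs Y^\gamma_s)$, and $\bs Y^\gamma_s$ is close to $\bs Y_t$ for $s$ near $t$. By upper semicontinuity, $F(\bs Y_s^\gamma)$ is contained in an $\varepsilon$-neighbourhood of $F(\bs Y_t)$. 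Averaging over $[t,t+h]$ shows that the difference quotient lies in the $\varepsilon$-neighbourhood of $\conv F(\bs Y_t)$. Letting $\gamma\to0$ and then $h,\varepsilon\to0$ gives (iii), since discontinuity points of $\bs Y$ are countable.

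\textbf{Step 4: the relaxed record condition (iv).} I show that the set $G=\{(x,y)\in\cC_d\times\cD_d: y_0=x_0,\ \text{(iv) holds}\}$ is closed and contains $(X^{i,\gamma},Y^{i,\gamma})$ up to a vanishing error; the Portmanteau theorem then transfers the property to the limit. At the discrete level, $y_k$ is the minimizer over $\{y_l\}\cup\{x_{l+1},\dots,x_k\}$ for $l<k$. Consequently, $Y_t^\gamma$ minimizes $f$ over $\{Y^\gamma_s\}\cup X^\gamma([s,t])$ up to grid effects, and I would state this with a small slack $\delta_\gamma\to0$ in the value of $f$ to handle interpolation. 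For closedness, suppose $(x_m,y_m)\to(x,y)$. At continuity points $s<t$ of $y$, the condition passes to the limit: $y_{m,t}$ converges and belongs either to $\{y_{m,s}\}$ or to $x_m([s,t])$ (both compact-convergent), and the argmin inequality is preserved by continuity of $f$. For arbitrary $s<t$, I use right-continuity to approximate by continuity points $s_k\downarrow s$ and $t_k\downarrow t$. The inclusion $x([s_k,t_k])\to x([s,t])$ in the Hausdorff sense, together with $y_{s_k}\to y_s$, then closes the argument.
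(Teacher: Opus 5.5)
Your overall architecture matches the paper's, and Steps 2--4 are sound. Putting $\bs M^\gamma$ into the joint limit identifies $\bs B$, and it even covers $\sigma=0$ without adjoining an auxiliary Brownian motion. Your Step 3, which uses that $J_1$ convergence is locally uniform near continuity points of $\bs Y$, is a slightly more direct version of the paper's Jensen/$C_h$ argument. In Step 4, a closed set built with closed intervals $[s,t]$ and an $f$-slack $\delta$ on a finite horizon suffices for (iv).

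The genuine gap is the $J_1$-tightness of $Y^{i,\gamma}$. This is part of the statement, and you leave it open. The concrete mechanism you propose is false: a large jump of $Y^{i,\gamma}$ need not come with any definite decrease of the record value. Suppose $f$ takes nearly equal values at two distant points, as in a double well. Then $X^{i,\gamma}$ can travel from one to the other and produce a jump of $Y^{i,\gamma}$ of order one, while $f(Y^{i,\gamma})$ drops by an arbitrarily small amount. Compactness and continuity of $f$ give no lower bound here, and the monotonicity of $t\mapsto f(Y^{i,\gamma}_t)$ is irrelevant. Your fallback via the closed graph of $\mathcal R_f$ is not an argument either: closedness yields no compactness without exactly the modulus estimate that is missing.

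The missing idea is the one you only state informally: every new value of $Y^{i,\gamma}$ is a current position of $X^{i,\gamma}$. So if $Y^{i,\gamma}$ changes value at time $a$, then on $[a,a+2\delta)$ it only takes values in $X^{i,\gamma}([a,a+2\delta))$. Hence two large jumps cannot occur within a short time unless $X^{i,\gamma}$ oscillates.

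The paper makes this quantitative with the partition $t_0=0$, $t_j=\inf\{s\geq t_{j-1}+2\delta: Y^{i,\gamma}_s\neq Y^{i,\gamma}_{t_{j-1}}\}$. One checks two facts:
\begin{itemize}
\item $Y^{i,\gamma}_{t_j}\in X^{i,\gamma}((t_j-2\delta,t_j])$;
\item all values of $Y^{i,\gamma}$ on $[t_{j-1},t_j)$ lie in $\{Y^{i,\gamma}_{t_{j-1}}\}\cup X^{i,\gamma}([t_{j-1},t_{j-1}+2\delta))$, hence in $X^{i,\gamma}((t_{j-1}-2\delta,t_{j-1}+2\delta))$.
\end{itemize}
Consecutive partition points are at least $2\delta$ apart. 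So the $J_1$ modulus $w'_m(Y^{i,\gamma},\delta)$ is bounded by the uniform modulus $w_{m+1}(X^{i,\gamma},4\delta)$. Tightness of $Y^{i,\gamma}$ then follows from that of $X^{i,\gamma}$ via Billingsley's criterion, together with the deterministic bound $\|Y^{i,\gamma}\|\leq R'$.
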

 Th.~\ref{th:sdi} is an intermediate result which we use to prove Th.\ref{th:continuous-time-limit}, but it is also of independent interest, notably because it allows to handle the noiseless case $\sigma =0$.
Formally, Th.~\ref{th:sdi} states the following differential inclusion:
\begin{equation}\label{eq:sdi_informal}
dX^i_t \in (\beta \conv F(\bs Y_t) + \bar \beta Y^i_t - X^i_t)dt + \sigma dB^i_t\,.
\end{equation}
 The result provides only a relaxed characterization of the
limit: the process $Y^i$ satisfies a closed record-consistency
property, while the global-best process is characterized through the
inclusion $\Theta_t\in\conv F(\bs Y_t)$. 
To recover the original
personal- and global-best selectors and prove Th.~\ref{th:continuous-time-limit}, we use the remaining assumptions:
$\sigma>0$, $d\geq2$, isolated critical points of $f$, and
$\mu_0\ll\operatorname{Leb}$.

The argument relies on Girsanov's theorem, which
makes the law of each particle trajectory equivalent to
that of a nondegenerate Brownian motion.
Since a Brownian motion in dimension $d\geq2$
almost surely does not hit any prescribed point, and since the set of
critical points of $f$ is countable, the particle trajectories almost
surely never visit $\crit f$. Around points with nonzero gradients, the Brownian noise produces fluctuations which allow every record level to be improved immediately. This removes the ambiguity left by the closed
record-consistency property and yields
\[
Y^i=\pbest(X^i),\qquad i\in[n].
\]
This point is established in Prop.~\ref{prop:pbest}.
The same crossing argument shows that the global best record is
attained by a unique particle for $\bP\otimes dt$-almost every $(\omega,t)$. 
Consequently,
$F(\bs Y_t)$ is almost everywhere a singleton, so that
\[
\Theta_t=\gbest(\bs Y_t)
\qquad \bP\otimes dt\text{-a.e.}
\]
This point is established in Cor.~\ref{cor:gbest}.
Combining these two selector-recovery results with Th.~\ref{th:sdi} shows that every weak limit point is a weak solution law of \eqref{eq:continuous-time}, which proves Th.~\ref{th:continuous-time-limit}.


\subsection{The role of convexification in the noiseless case}\label{ss:convexification}

We argue that, when $\sigma =0$, the convexification appearing in Th.~\ref{th:sdi} is not merely a
technical artifact. When $\sigma>0$, the additional assumptions of
Th.~\ref{th:continuous-time-limit} imply that $F(\bs Y_t)$ is a
singleton for $\bP\otimes dt$-almost every $(\omega,t)$, and the
convexification disappears. This need not be the case when
$\sigma=0$.

Indeed, Th.~\ref{th:sdi} also applies in the noiseless setting. It
shows that every weak limit point admits a realization satisfying
\begin{equation}
\label{eq:noiseless-sdi}
\frac{dX_t^i}{dt}
=
\beta\Theta_t+\bar\beta Y_t^i-X_t^i,
\qquad
\Theta_t\in\conv F(\bs Y_t),
\end{equation}
for Lebesgue-almost every $t$, together with the record-consistency
property
\begin{equation}\label{eq:record_consistency_recall}
Y_t^i
\in
\arg\min\left\{
f(u):
u\in\{Y_s^i\}\cup X^i([s,t])
\right\},
\qquad 0\leq s<t.
\end{equation}
The theorem does not assert in this case that
$Y^i=\pbest(X^i)$. Nevertheless, the following simple example shows that the
convex hull in~\eqref{eq:noiseless-sdi} may genuinely be active.

\begin{figure}[h]
  \centering
  \includegraphics[width=0.8\linewidth]{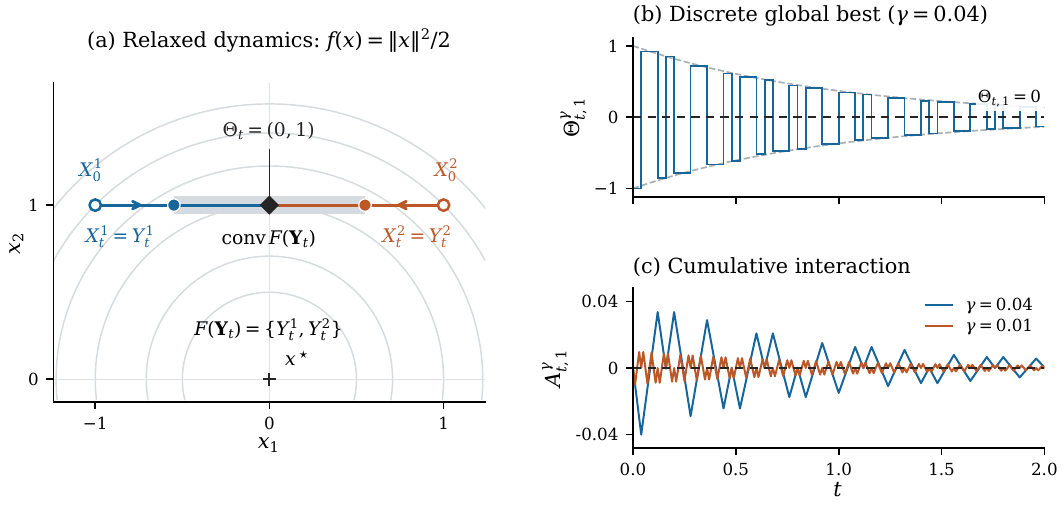}
  \caption{Convexification in noiseless PSO with $n=2$, $\beta=1$,
  $f(x)=\|x\|^2/2$, and initial positions $X_0^1=(-1,1)$ and
  $X_0^2=(1,1)$.
  (a) The limiting trajectories are $X_t^1=(-e^{-t},1)$ and
  $X_t^2=(e^{-t},1)$, with $Y_t^i=X_t^i$; filled circles show
  the positions at $t=0.6$ and grey curves are level sets of $f$.
  The interaction $\Theta_t=(0,1)$ lies strictly between the two
  record positions for every finite $t$.
  (b) First coordinate of the piecewise-constant discrete global best
  for $\gamma=0.04$; the grey dashed curves are $\pm e^{-t}$.
  (c) First coordinate of
  $A^\gamma_t=\int_0^t\Theta^\gamma_s\,ds$
  for two step sizes. The black dashed lines in (b) and (c)
  show the first coordinates of $\Theta_t$ and its integral,
  both equal to zero.}
  \label{fig:noiseless-convexification}
\end{figure}

\begin{example}[Noiseless case, two tied particles and a quadratic objective]
Set $d=n=2$, $\beta=1$, $\sigma = 0$, and $f(x)= \|x\|^2$. Consider the continuous-time relaxed dynamics \eqref{eq:noiseless-sdi} initialized at $X_0^1 = (-1,1)$ and $X_0^2 = (1,1)$, which write
\[
dX_t^i = (\Theta_t - X_t^i) dt, \qquad  \Theta_t \in \operatorname{conv} F(\bs Y_t),
\]
along with the record-consistency property \eqref{eq:record_consistency_recall}.
These dynamics admit a unique solution which writes
\[
\Theta_t = \left( 0,1 \right), \qquad 
X_t^1 = (-e^{-t},1), \qquad X_t^2 = (e^{-t},1) , \qquad F(\bs Y_t) = \{ X_t^1,X_t^2 \}, \qquad \forall t \geq 0.
\]
In particular,
$\Theta_t\notin F(\bs Y_t)$, while
$\Theta_t\in\conv F(\bs Y_t)$.
Hence the convexification in Th.~\ref{th:sdi} is essential in the
noiseless regime. Figure~\ref{fig:noiseless-convexification} illustrates this behavior numerically.
\end{example}
\begin{proof}
First note that by strong convexity of $f$, we have
\[\tfrac{d}{dt} f(X_t^i) = \langle \nabla f(X_t^i), \Theta_t - X_t^i  \rangle  \leq f(\Theta_t) - f(X_t^i) - \frac{1}{2} \|\Theta_t - X_t^i\|^2 \leq 0,\]
where the last inequality comes from the fact that $f(\Theta_t) \leq \min_{j \in [n]} f(Y_t^j) \leq f(X_t^i)$ by convexity and the record consistency properties (Th. \ref{th:sdi} (iii)-(iv)).

Therefore the function $t \mapsto f(X_t^i)$ is nonincreasing. We argue that this implies that $X_t$ is equal to $Y_t$ at all times. Indeed, fix $t \geq 0$. By the record property \eqref{eq:record_consistency_recall} with $s=0$, there exists $u \in [0,t]$ such that $Y_t^i = X_u^i$. From monotonicity and \eqref{eq:record_consistency_recall}, we get
$
f(X_u^i)=f(Y_t^i)=f(X_t^i).
$
Thus $f(X_r^i)$ is constant on $[u,t]$, and the previous inequality
implies $\Theta_r=X_r^i$ for almost every $r\in[u,t]$. Hence
$\frac{d}{dt}X_r^i=0$ on $[u,t]$, so $X_u^i=X_t^i$. We deduce that
\[
Y_t^i=X_t^i,
\qquad t\geq0,\quad i=1,2.
\]

Because the particles lie initially on the convex segment $[-1,1]\times \{1\}$, they remain inside for all $t$. Write the coordinates $X_t^i = (z_t^i,1)$ for $i = 1,2$, and define the quantities
\[ m_t = \frac{ z_t^1 + z_t^2 }{2}, \qquad d_t = z_t^2 - z_t^1.\]
Note that $\dot{d_t} = -d_t$, therefore $d_t = 2e^{-t}$; in particular, $d_t$ is positive. Since
$f(X^2_t) - f(X^1_t) = (z_t^2)^2 - (z_t^1)^2 = 2m_td_t,
$
the global best is determined by the sign of $m_t$. 

If $m_t < 0$, then particle 2 is the unique global best, $F(\bs Y_t) = \{X_t^2\}$, $\Theta_t = X_t^2$ and
\[
  \dot{m_t} = z_t^2 - m_t = \frac{d_t}{2} > 0.
\]
If on the other hand $m_t > 0$, then similarly $F(\bs Y_t) = \{X_t^1\}$, $\Theta_t = X_t^1$ and 
\[
  \dot{m_t} = z_t^1 - m_t = -\frac{d_t}{2} < 0.
\]
It follows that $\frac{d}{dt}m_t^2 = - |m_t| d_t \leq  0$. Since $m_0 = 0$ at initialization, we have $m_t = 0$ for any $t$. We deduce that $ \frac{X_t^1 + X_t^2}{2} = (0,1)$. Noting that 
\[ \frac{d}{dt}\left( \frac{X_t^1 + X_t^2}{2} \right) = \Theta_t - \frac{X_t^1 + X_t^2}{2}, 
\]
we conclude that $\Theta_t = (0,1)$, and the expressions for $X_t^1,X_t^2$ follow.
\end{proof}

\section{Proof of the stochastic differential inclusion theorem (Th.~\ref{th:sdi})}
\label{s:proof}

Assumptions~\ref{hyp:stab}-\ref{hyp:girsanov-free},~\ref{hyp:random}-\ref{hyp:girsanov-free-1} and \ref{hyp:random}-\ref{hyp:girsanov-free-2}
are in force throughout this section.

\subsection{Interpolated processes}

We define $\iota:\cD_d\to\cC_d$ by
\begin{equation}
\label{eq:def-iota}
\iota(x)_t=\int_0^t x_s\,ds.
\end{equation}
We use the same notation $\iota(\bs x) \eqdef (\iota(x^1),\dots,\iota(x^n))$ when $\bs x\in (\cD_d)^n$.
We defined $A^\gamma = \iota(\Theta^\gamma)$, where $\Theta_t^\gamma = \theta_k^\gamma$ for $t\in [k\gamma, (k+1)\gamma)$.
Define also $\bs M^\gamma=(M^{1,\gamma},\dots,M^{n,\gamma})$, where
\[
M_t^{i,\gamma}
\eqdef
\sigma \gamma^{-1/2}\int_0^t
\xi_{\lfloor s/\gamma\rfloor+1}^i\,ds,
\qquad i\in[n].
\]
By Donsker’s theorem, $M^{i,\gamma}$ converges weakly to a Brownian motion as $\gamma \downarrow 0$.
\begin{lemma} \label{lem:decompX} 
Let $\bs X^\gamma, \bs Y^\gamma$ as defined by \eqref{eq:Xgamma}-\eqref{eq:Ygamma}, and $\bs M^\gamma$ as above.
For every $i\in[n]$,
\begin{equation}
\label{eq:decompX}
X^{i,\gamma}
=
X_0^{i,\gamma}
+\beta A^\gamma
+\bar\beta\,\iota(Y^{i,\gamma})
-\iota(X^{i,\gamma})
+M^{i,\gamma}
+r^{i,\gamma},
\end{equation}
where
$r_t^{i,\gamma}
\eqdef
\int_0^t
\left(
X_s^{i,\gamma}
-
X_{\gamma\lfloor s/\gamma\rfloor}^{i,\gamma}
\right)\,ds.
$
\end{lemma}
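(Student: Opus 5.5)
The plan is a direct telescoping computation on the discrete recursion~\eqref{eq:algo}. First write $X^{i,\gamma}$ as the integral of its derivative: since $X^{i,\gamma}$ is piecewise linear with slope $(x^{i,\gamma}_{k+1}-x^{i,\gamma}_k)/\gamma$ on $[k\gamma,(k+1)\gamma)$, we have
\[
X_t^{i,\gamma}=X_0^{i,\gamma}+\int_0^t \gamma^{-1}\bigl(x^{i,\gamma}_{\lfloor s/\gamma\rfloor+1}-x^{i,\gamma}_{\lfloor s/\gamma\rfloor}\bigr)\,ds
\]
for every $t\geq0$, including non-grid $t$. This holds because the integrand is piecewise constant and $X^{i,\gamma}$ is continuous.

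Next, substitute the recursion. With $k=\lfloor s/\gamma\rfloor$, the integrand equals
\[
\beta\theta_k+\bar\beta y_k^i-x_k^i+\sigma\gamma^{-1/2}\xi_{k+1}^i,
\]
using $\beta+\bar\beta=1$. By definitions~\eqref{eq:Ygamma} and~\eqref{eq:Thetagamma}, $\theta_k=\Theta^\gamma_s$, $y^i_k=Y^{i,\gamma}_s$, and $x^i_k=X^{i,\gamma}_{\gamma\lfloor s/\gamma\rfloor}$. Integrating term by term gives $\beta A^\gamma_t$, $\bar\beta\,\iota(Y^{i,\gamma})_t$, the noise term $M^{i,\gamma}_t$ by its definition, and $-\int_0^t X^{i,\gamma}_{\gamma\lfloor s/\gamma\rfloor}ds$.

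Finally, add and subtract $\iota(X^{i,\gamma})_t$ in this last term. This yields
\[
-\int_0^t X^{i,\gamma}_{\gamma\lfloor s/\gamma\rfloor}\,ds=-\iota(X^{i,\gamma})_t+r^{i,\gamma}_t,
\]
which is exactly~\eqref{eq:decompX}.

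There is no real obstacle here. The only points needing care are that the piecewise-constant interpolations are evaluated at the left grid point, which matches the use of $\theta_k,y_k,x_k$ in the step from $k$ to $k+1$, and that all integrands are locally bounded step functions, so every integral is well defined pathwise.
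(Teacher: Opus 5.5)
Your proposal is correct and is essentially the paper's own argument. You write $X^{i,\gamma}$ as the integral of its piecewise-constant slope, substitute the recursion using $\beta+\bar\beta=1$, identify the terms $\beta A^\gamma$, $\bar\beta\,\iota(Y^{i,\gamma})$ and $M^{i,\gamma}$, and absorb the grid-point evaluation of $X^{i,\gamma}$ into $-\iota(X^{i,\gamma})+r^{i,\gamma}$, just as the paper does.
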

\begin{proof}
By definition of the piecewise-linear interpolation \eqref{eq:Xgamma} and the discrete dynamics \eqref{eq:algo},
\begin{align*}
X_t^{i,\gamma}
&=
X_0^{i,\gamma}
+
\frac1\gamma
\int_0^t
\left(
x_{\lfloor s/\gamma\rfloor+1}^{i,\gamma}
-
x_{\lfloor s/\gamma\rfloor}^{i,\gamma}
\right)\,ds
\\
&=
X_0^{i,\gamma}
+
\int_0^t
\left(
\beta\theta_{\lfloor s/\gamma\rfloor}^\gamma
+\bar\beta y_{\lfloor s/\gamma\rfloor}^{i,\gamma}
-x_{\lfloor s/\gamma\rfloor}^{i,\gamma}
+\sigma\gamma^{-1/2}\xi_{\lfloor s/\gamma\rfloor+1}^i
\right)\,ds.
\end{align*}
Using the definitions above and
$x_{\lfloor s/\gamma\rfloor}^{i,\gamma}
=
X_{\gamma\lfloor s/\gamma\rfloor}^{i,\gamma}$
gives~\eqref{eq:decompX}.
\end{proof}

The quantity $r^{i,\gamma}$ is the error produced by replacing the stepwise process $X_{\gamma\lfloor s/\gamma\rfloor}^{i,\gamma}$ with its linear interpolation $X_s^{i,\gamma}$. It will be shown to vanish as $\gamma\downarrow0$.

\subsection{Tightness over step size}

Having defined the continuous-time interpolation, the next crucial step is to show that a continuous-time limit
actually exists. We do that by proving the tightness of the family of processes $(\bs X^\gamma, \bs Y^\gamma, A^\gamma)$
as $\gamma\downarrow 0$. 

We begin by establishing a uniform moment bound on the particle trajectories.
\begin{lemma}
\label{lem:moment-X}
We have
\[
\sup_{i\in[n],\,k\ge0,\,\gamma\in(0,1/2)}
\bE\| x_k^{i,\gamma}\|^2<\infty.
\]
\end{lemma}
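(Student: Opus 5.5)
The first step is to confine the memory variables to a deterministic compact set; the second moment of $x_k^{i,\gamma}$ then follows from a linear recursion with contraction factor $1-\gamma$.

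\textbf{Step 1: a deterministic compact set.} By Assumption~\ref{hyp:random}\ref{hyp:girsanov-free-1}, there is a compact $K_0$ containing the support of $\mu_0$. Let $c\eqdef\max_{K_0}f<\infty$ and $K\eqdef\{x: f(x)\leq c\}$. Since $f$ is coercive and continuous, $K$ is compact, and we set $R\eqdef\sup_{x\in K}\|x\|<\infty$.

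Fix $i$, $k$ and $\gamma$. By \eqref{eq:algo}, $f(y_k^{i,\gamma})\leq f(x_0^{i,\gamma})\leq c$, because $y_k^{i,\gamma}$ is a minimizer of $f$ over a set containing $x_0^{i,\gamma}$. Likewise $f(\theta_k^\gamma)\leq f(y_k^{i,\gamma})\leq c$. Hence $y_k^{i,\gamma}$ and $\theta_k^\gamma$ lie in $K$, so $\|y_k^{i,\gamma}\|\leq R$ and $\|\theta_k^\gamma\|\leq R$ almost surely, uniformly in $i,k,\gamma$.

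\textbf{Step 2: rewriting the recursion.} Using $\beta+\bar\beta=1$, the update in \eqref{eq:algo} becomes
\[
x_{k+1}^i=(1-\gamma)x_k^i+\gamma b_k^i+\sqrt\gamma\sigma\xi_{k+1}^i,
\qquad b_k^i\eqdef\beta\theta_k+\bar\beta y_k^i .
\]
By Step 1, $\|b_k^i\|\leq R$. The vectors $x_k^i$ and $b_k^i$ are measurable with respect to $\cF_k\eqdef\sigma(\bs x_0,\bs\xi_1,\dots,\bs\xi_k)$. Moreover, $\xi_{k+1}^i$ is independent of $\cF_k$, centered, and satisfies $\bE\|\xi_{k+1}^i\|^2=d$.

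\textbf{Step 3: the moment recursion.} Squaring the rewritten update and taking expectations, the cross term involving the noise vanishes by independence. This gives
\[
\bE\|x_{k+1}^i\|^2=\bE\|(1-\gamma)x_k^i+\gamma b_k^i\|^2+\gamma\sigma^2 d .
\]
For $\gamma<1/2$, the point $(1-\gamma)x_k^i+\gamma b_k^i$ is a convex combination of $x_k^i$ and $b_k^i$. By convexity of $\|\cdot\|^2$,
\[
\|(1-\gamma)x_k^i+\gamma b_k^i\|^2\leq(1-\gamma)\|x_k^i\|^2+\gamma R^2 .
\]
Setting $u_k\eqdef\bE\|x_k^i\|^2$, we obtain
\[
u_{k+1}\leq(1-\gamma)u_k+\gamma(R^2+\sigma^2 d).
\]

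\textbf{Step 4: conclusion.} We prove by induction that $u_k\leq\max(u_0,R^2+\sigma^2 d)$ for all $k$. The case $k=0$ is trivial. If $u_k\leq M\eqdef\max(u_0,R^2+\sigma^2 d)$, then the right-hand side of the recursion is a convex combination of $u_k$ and $R^2+\sigma^2 d$, both at most $M$, so $u_{k+1}\leq M$.

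Finally, $u_0=\bE\|x_0^{i,\gamma}\|^2\leq \sup_{x\in K_0}\|x\|^2$, which does not depend on $\gamma$ since $\mu_0$ is the same for all $\gamma$. The bound $M$ is therefore uniform in $i$, $k$ and $\gamma\in(0,1/2)$, which proves the lemma.

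There is no real obstacle in this argument. The only point requiring care is Step 1: the memory variables must be bounded deterministically, not just in expectation. This is exactly what coercivity of $f$ and the compact support of $\mu_0$ provide.
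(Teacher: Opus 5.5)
Your proof is correct and follows the paper's argument: a deterministic bound on $y_k^{i,\gamma}$ and $\theta_k^\gamma$ obtained from coercivity of $f$ and the compact support of $\mu_0$, followed by a second-moment recursion whose threshold is preserved by induction. The only difference is in handling the drift term. You bound it by convexity of $\|\cdot\|^2$, which gives the factor $1-\gamma$ directly and is valid for every $\gamma\le 1$. The paper instead expands the square and absorbs the cross term $2\gamma R'\bE\|x_k^{i,\gamma}\|$, which yields the factor $1-\gamma/2$ and uses $\gamma<1/2$.
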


\begin{proof}
  Since $\supp(\mu_0)\subset B(0,R)^n$ for some $R > 0$, the personal best $ y_k^{i,\gamma}$ belongs a.s. to the sublevel set $\{f\leq \sup f(B(0,R))\}$, which is compact, as $f$ is coercive. Thus there exists $R'>0$ such that
$\| y_k^{i,\gamma}\|,\|\theta_k^\gamma\|\le R'$.
Setting $z_k^{i,\gamma}=\beta\theta_k^\gamma+\bar\beta y_k^{i,\gamma}$, we have $\|z_k^{i,\gamma}\|\le R'$.
Using  $\gamma<1/2$ and the independence of the Gaussian increments,
\begin{align*}
\bE\| x_{k+1}^{i,\gamma}\|^2
&\le
(1-\gamma)^2\bE\| x_k^{i,\gamma}\|^2
+\gamma^2R'^2+\gamma\sigma^2d
+2\gamma R'\,\bE\| x_k^{i,\gamma}\| \\
&\le
\left(1-\frac\gamma2\right)\bE\| x_k^{i,\gamma}\|^2
+\gamma C,
\end{align*}
where $C=2R'^2+\sigma^2d$. Hence
$\bE\| x_{k+1}^{i,\gamma}\|^2\le\bE\| x_k^{i,\gamma}\|^2$
whenever $\bE\| x_k^{i,\gamma}\|^2\ge2C$, and the result follows.
\end{proof}

\begin{prop}\label{prop:tightness-gamma}
  For each $i\in [n]$, the family of processes $(\inX^{i,\gamma},Y^{i, \gamma}, A^\gamma)_{\gamma>0}$ on $\cC_d\times \cD_d \times  \cC_d $
  is tight as $\gamma\downarrow 0$, and $r^{i,\gamma}$ converges in probability to zero in $\cC_d$ as $\gamma\downarrow 0$.
\end{prop}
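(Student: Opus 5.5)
Since a finite product of tight families is tight, it suffices to treat each of $X^{i,\gamma}$, $Y^{i,\gamma}$ and $A^\gamma$ separately. The argument rests on one fact from the proof of Lemma~\ref{lem:moment-X}: the personal and global bests stay a.s. in a deterministic ball, $\|y_k^{i,\gamma}\|,\|\theta_k^\gamma\|\le R'$. Throughout we fix a horizon $T$ and restrict to $\gamma<1/2$.

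\textbf{The integral process $A^\gamma$.} $A^\gamma$ is $R'$-Lipschitz with $A^\gamma_0=0$. By Arzelà--Ascoli, its paths lie in a fixed compact subset of $\cC_d$, so the family is tight.

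\textbf{The particle process $X^{i,\gamma}$ and the remainder.} We use the decomposition~\eqref{eq:decompX} of Lemma~\ref{lem:decompX}.
\begin{itemize}
\item $X_0^{i,\gamma}$ is supported in a fixed compact set.
\item $\beta A^\gamma$ and $\bar\beta\,\iota(Y^{i,\gamma})$ are $R'$-Lipschitz.
\item $M^{i,\gamma}$ is tight in $\cC_d$ by Donsker's theorem. It is the linear interpolation of a scaled Gaussian random walk, and tightness can also be checked directly through Kolmogorov's criterion using $\bE\|M_t^{i,\gamma}-M_s^{i,\gamma}\|^4\le C|t-s|^2$.
\end{itemize}
Since $X^{i,\gamma}$ appears on both sides through $\iota(X^{i,\gamma})$, we treat~\eqref{eq:decompX} as a linear Volterra equation $X=G-\iota(X)$. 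Its solution map $G\mapsto X$ is continuous on $\cC_d$: by Gronwall, $\|X\|_{\infty,T}\le e^T\|G\|_{\infty,T}$, and the same estimate gives a modulus of continuity of $X$ controlled by that of $G$. Tightness of $X^{i,\gamma}$ then follows once $r^{i,\gamma}\to0$ in probability. For that term,
\[
\sup_{t\le T}\|r_t^{i,\gamma}\|\le\int_0^T\|x_{\lfloor s/\gamma\rfloor+1}^{i,\gamma}-x_{\lfloor s/\gamma\rfloor}^{i,\gamma}\|\,ds .
\]
Each increment has second moment at most $C(\gamma^2+\gamma)$ by Lemma~\ref{lem:moment-X}. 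Hence $\bE\|r^{i,\gamma}\|_{\infty,T}\le CT\sqrt\gamma\to0$.

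\textbf{The personal best $Y^{i,\gamma}$ (main obstacle).} This is the delicate step, because $Y^{i,\gamma}$ is a jump process in $\cD_d$. Compact containment holds since $\|Y^{i,\gamma}_t\|\le R'$. What remains is control of the $J_1$ modulus $w'$. I would use Aldous' criterion or a direct argument. The key structural fact is that $t\mapsto f(Y_t^{i,\gamma})$ is nonincreasing and bounded. A jump of size $\ge\eta$ at step $k$ means $y_{k}=x_k$ and $\|x_k-y_{k-1}\|\ge\eta$, with $f(x_k)\le f(y_{k-1})$.

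\begin{itemize}
\item \emph{Jumps reaching a far-away point.} Let $m$ be the index at which $y_{k-1}$ was recorded, so $y_{k-1}=x_m$ with $m<k$. A jump of size $\ge\eta$ forces $\|x_k-x_m\|\ge\eta$. If $k-m$ is small (at most $\delta/\gamma$ steps), this requires the modulus of continuity of $X^{i,\gamma}$ on windows of length $\delta$ to be $\ge\eta$, which is unlikely by the tightness just established.
\item \emph{Many large jumps within $\delta$.} If $k-m$ is large, the old record has been kept over a long stretch. Two large jumps within time $\delta$ then give two record times $k_1<k_2$ with $k_2-k_1\le\delta/\gamma$. The jump at $k_2$ moves away from $x_{k_1}$ by $\ge\eta$, so the modulus of $X$ on scale $\delta$ is $\ge\eta$, which is again improbable.
\end{itemize}
So with high probability, large jumps of $Y^{i,\gamma}$ are separated by more than $\delta$. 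Between them, the movement of $Y$ is bounded by small jumps, each again controlled by the modulus of $X$: between consecutive records, $Y$ moves from $x_{k_1}$ to $x_{k_2}$. To make this precise, I would show that within a window of length $\delta$ containing no large jump, consecutive record positions all lie within $w_X(\delta)$ of each other. This holds because every record set in that window is a point visited by $X$ during the window, except the initial stored value. This yields $w'(Y^{i,\gamma},\delta)\le 2w(X^{i,\gamma},\delta)+$ small, and tightness in $\cD_d$ follows.

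I expect the bookkeeping of this last step to be the hardest part: the initial stored record in each window may be far away, and the partition into intervals has to be chosen adaptively at large jumps.
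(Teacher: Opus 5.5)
Your proposal is correct but differs from the paper in two places. For $X^{i,\gamma}$, you invert the linear Volterra relation $X=G-\iota(X)$ through its continuous solution map $X_t=G_t-\int_0^t e^{-(t-s)}G_s\,ds$. The paper instead proves tightness of $\iota(X^{i,\gamma})$ separately by Kolmogorov--Chentsov, using $\bE\|\iota(X^{i,\gamma})_t-\iota(X^{i,\gamma})_s\|^2\le C(t-s)^2$ from Lemma~\ref{lem:moment-X}, and then adds tight families. Your route is pathwise and saves that moment estimate. Your handling of $A^\gamma$, $M^{i,\gamma}$ and $r^{i,\gamma}$ is essentially the paper's.

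The substantive difference is $Y^{i,\gamma}$. The paper uses no jump-size threshold. It sets $t_j=\inf\{s\ge t_{j-1}+2\delta:\ Y^{i,\gamma}_s\neq Y^{i,\gamma}_{t_{j-1}}\}$. It then notes that $Y^{i,\gamma}_{t_{j-1}}$ was recorded during $(t_{j-1}-2\delta,t_{j-1}]$, and that on $[t_{j-1},t_j)$ the memory can only change during the first $2\delta$. This gives the deterministic bound $w(Y^{i,\gamma},[t_{j-1},t_j))\le w(X^{i,\gamma},(t_{j-1}-2\delta,t_{j-1}+2\delta))$, hence $\bP(w'_m(Y^{i,\gamma},\delta)\ge\varepsilon)\le\bP(w_{m+1}(X^{i,\gamma},4\delta)\ge\varepsilon)$.

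Your threshold argument also closes. On $\{w_T(X^{i,\gamma},2\delta)<\eta\}$, any jump of size $\ge\eta$ follows a record held for more than $2\delta$. So such jumps occur after time $2\delta$ and are more than $2\delta$ apart. Partition at them and cut each gap into pieces of length in $(\delta,2\delta]$. On each piece, the values of $Y^{i,\gamma}$ are the stored value and points of $X^{i,\gamma}$ visited on the piece, and the first such point is within $\eta$ of the stored value. The oscillation is therefore below $\eta+w_T(X^{i,\gamma},2\delta)<2\eta$, so $w'_T(Y^{i,\gamma},\delta)<2\eta$. This is the adaptive bookkeeping you anticipated. It costs a threshold and a subdivision step, but it makes explicit that macroscopic jumps of the memory only occur after a record has been held for a macroscopic time.

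Two minor remarks. In your second bullet, the jump at $k_2$ moves away from $y_{k_2-1}=x_{m_2}$ with $m_2\in[k_1,k_2)$, not necessarily from $x_{k_1}$; the conclusion is unaffected. Also, Aldous' criterion would not have worked here. The memory genuinely jumps at predictable times, namely when $f(X^i)$ comes back down to its record level at a point far from the stored record. So the direct $w'$ argument is the right choice.
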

\begin{proof}
  We use the decomposition given by Lemma \ref{lem:decompX}.
  First, we prove that:
  \begin{itemize}
  \item \underline{$(\iota( Y^{i,\gamma}))_{\gamma>0}$ is tight}.
  As in the proof of Lemma \ref{lem:moment-X}, we note that, by compactness of the support of $\mu_0$ and coercivity of $f$, there exists a deterministic $R'>0$ s.t. $\| y_k^{i,\gamma}\|\leq R'$. The family $(\iota( Y^{i,\gamma}))_{\gamma>0}$ 
    belongs to the set of $R'$-Lipschitz continuous functions, null at the origin. The latter set is relatively compact in $\cC_d$.
  \item \underline{$(A^\gamma) = (\iota(\Theta^{\gamma}))_{\gamma>0}$ is tight}.
    We use the same argument, noticing that $\|\theta_k^{\gamma}\|\leq R'$.
  \item \underline{$(M^{i,\gamma})_{\gamma>0}$ is tight}
  . This is a consequence of Donsker's theorem.
  \item \underline{$(\iota(\inX^{i,\gamma}))_{\gamma>0}$ is tight}
  . This is a consequence of the inequality, for every $0<s<t<T$,
    \begin{align*}
      \bE(\|\iota(\inX^{i,\gamma})_t - \iota(\inX^{i,\gamma})_s\|^2) = \bE\left(\left\| \int_s^t \inX_u^{i,\gamma} du\right\|^2\right)
      \leq (t-s) \int_s^t \bE\left(\left\| \inX_u^{i,\gamma} \right\|^2\right) du \ \leq C (t-s)^2\,,
    \end{align*}
    where $C$ is the supremum of $\bE\left(\left\| \inX_u^{i,\gamma} \right\|^2\right)$ w.r.t $i,u$ and $\gamma<\tfrac 12$, which is finite by Lemma~\ref{lem:moment-X}.
    The conclusion follows from the Kolmogorov-Chentsov criterion \cite[Corollary 14.9]{kallenberg1997foundations}.
  \item \underline{$(r^{i,\gamma})_{\gamma>0}$ converges in probability to zero}. For every $T>0$, 
\[
\sup_{t\leq T}\|r_t^{i,\gamma}\|
\leq
\frac{\gamma}{2}
\sum_{k=0}^{\lfloor T/\gamma\rfloor}
\|x_{k+1}^{i,\gamma}-x_k^{i,\gamma}\|.
\]
  Now, setting $z_k^{i,\gamma} = \beta \theta_k^\gamma + \bar\beta  y_k^{i,\gamma}$, we write 
  $  x_{k+1}^{i,\gamma} =  (1-\gamma) x_{k}^{i,\gamma}+\gamma z_k^{i,\gamma} +\sqrt\gamma \sigma \xi_{k+1}^{i}$, which yields:
  \begin{align*}
     \|   x_{k+1}^{i,\gamma}- x_k^{i,\gamma}\| &\leq \gamma \|z_k^{i,\gamma}\| + \gamma \| x_k^{i,\gamma}\|+\sqrt\gamma \sigma \|\xi_{k+1}^{i}\|\,.
  \end{align*}
  By the standing assumptions, $z_k^{i,\gamma}$ is bounded by a deterministic constant $R'>0$. Moreover, by Lem.~\ref{lem:moment-X}, there exists $C>0$ such that $\bE\| x_k^{i,\gamma}\|\leq C$. We deduce:
\[
\sup_{k\gamma\leq T}
\bE\|x_{k+1}^{i,\gamma}-x_k^{i,\gamma}\|
\leq C(\gamma+\sqrt{\gamma}),
\]
for some $C>0$. Therefore,
\[
\bE\left[\sup_{t\leq T}\|r_t^{i,\gamma}\|\right]
\leq C (T+\gamma)(\gamma+\sqrt{\gamma})
\longrightarrow0.
\]
Hence $r^{i,\gamma}\to0$ in probability in $\cC_d$ by Markov's inequality.

\item \underline{$(X^{i,\gamma})_{\gamma>0}$ is tight}.   This is a consequence of the decomposition~(\ref{eq:decompX}) and the previous points.

\item \underline{$(Y^{i,\gamma}))_{\gamma>0}$ is tight}. Let us introduce some notations.
For $x\in\cD_p$ and for every interval $I\subset [0,\infty)$, set $w(x,I)\eqdef\sup_{s,t\in I}\|x_s-x_t\|$.
For $T,\delta>0$, set
\begin{eqnarray*}
w_T(x,\delta)&\eqdef&\sup_{\substack{s,t\in[0,T]\\|t-s|\leq\delta}}\|x_t-x_s\|, \\
w'_T(x,\delta)&\eqdef&
\inf_{\substack{0=t_0<\cdots<t_k < T, \\ \min_j (t_{j}-t_{j-1})>\delta}} \;
\max_{0\leq j\leq k}w(x,I_j),
\end{eqnarray*}
where $I_j = [t_{j},t_{j+1})$ for $ 0\leq j < k$ and $I_k = [t_{k},T)$. 

Consider a fixed $i\in [n]$.
  Following \cite[Th. 16.8]{billingsley1999convergence} we verify two conditions
  \begin{equation}
      \lim_{a\to\infty} \limsup_{\gamma\downarrow 0} \bP\left( \sup_{t\leq m}\|\inY^{i, \gamma}_t\| \geq a \right) = 0 \quad \forall m\in\bN 
  \end{equation}
\begin{equation}\label{eq:tightness-D-gamma}
    \lim_{\delta\to 0} \limsup_{\gamma\to 0} \bP( w^\prime_m(\inY^{i, \gamma}, \delta)\geq \varepsilon ) = 0 \quad \forall m\in\bN,\, \varepsilon>0
\end{equation}
The first condition is satisfied since $\|\inY^{i, \gamma}\|_\infty \leq R$ for some deterministic $R>0$.
  
We now show that the second condition holds. Fix $\gamma>0,\,\delta \in(0,1/2)$ and $m\in\bN$, define $t_0=0$ and 
$$t_{j} = \inf\{s\geq t_{j-1} + 2\delta: \inY_s^{i, \gamma} \neq Y_{t_{j-1}}^{i, \gamma} \},$$ 
for $j\in \bN$.
Observe that for any $j\in\bN$,
\begin{enumerate}
    \item[(i)] If $t_j> t_{j-1}+2\delta$, then $t_j$ is an update time of $\inY^{i, \gamma}$, that is $\inY^{i, \gamma}_{t_j} = \inX^{i, \gamma}_{t_j}$. 
    Moreover if $t_j> t_{j-1}+2\delta$, then for any $s\in [t_{j-1}+2 \delta, t_j)$ we have $\inY^{i, \gamma}_s = \inY^{i, \gamma}_{t_{j-1}}$.
    \item[(ii)] If $t_j = t_{j-1}+2\delta$, then the last update time up to $t_j$ was on $(t_j-2\delta, t_j]$, that is
    $\inY^{i, \gamma}_{t_j} \in \inX^{i, \gamma}_{(t_j -2\delta,t_j]}$.
    \item[(iii)] For any $s\in [t_{j-1}, t_j)$ we have $\inY^{i, \gamma}_s \in \{\inY^{i, \gamma}_{t_{j-1}}\}\cup \inX^{i, \gamma}_{[t_{j-1},t_j)}$.
\end{enumerate}
From (i) and (ii) it follows that $\inY^{i, \gamma}_{t_j} \in \inX^{i, \gamma}_{(t_j-2\delta, t_j]}$. And from (i) and (iii) it follows that \\
$\inY^{i, \gamma}_s \in \{\inY^{i, \gamma}_{t_{j-1}}\}\cup \inX^{i, \gamma}_{[t_{j-1},t_{j-1}+2 \delta)}$ for any $s\in[t_{j-1}, t_j)$. 
Combining these two observations, we derive that for any $j\in\bN$,
$$
    w( \inY^{i, \gamma}, [t_{j-1},t_j)) \leq w( \inX^{i, \gamma}, (t_{j-1}-2\delta, t_{j-1}+2\delta)).
$$
The partition $(t_0,t_1,\dots, t_k)$ where $k = \max\{j: t_j< m\}$, satisfies $t_{j}-t_{j-1}>\delta, \; \forall 1 \leq j \leq k$. 
Note as well that $Y^{i,\gamma}_{[t_k, m)}\subset Y^{i,\gamma}_{[t_k, t_{k+1})}$ and $t_k+2\delta < m+1$.
Then by definition of $w^\prime_m$,
\begin{equation}\label{eq:modulus-Y-modulus-X-gamma}
    \bP( w^\prime_m(\inY^{i, \gamma}, \delta)\geq \varepsilon ) \leq \bP( w_{m+1}(\inX^{i, \gamma}, 4\delta)\geq \varepsilon) 
\end{equation}
for any $m\in\bN$ and $\delta \in (0,1/2), \,\gamma>0$. The tightness of $\inX^{i, \gamma}$ implies that for any $\varepsilon>0$
$$
\lim_{\delta\to 0} \limsup_{\gamma\to 0} \bP( w_{m+1}(\inX^{i,\gamma}, 4\delta)\geq \varepsilon ) = 0.
$$ 
Hence, using \eqref{eq:modulus-Y-modulus-X-gamma} we recover \eqref{eq:tightness-D-gamma}. \color{black}  \end{itemize}

\end{proof}

\subsection{Integral representation and Brownian motion}

Having established tightness of $(\bs X^\gamma, \bs Y^\gamma, A^\gamma)$, we now study the properties of subsequential limits. From any such limit $(\bs X,\bs Y,A)$, we construct a process $\Theta$ as the time-derivative of $A$, and identify the limiting noise as a Brownian motion with respect to the proper filtration. 

Assume that $\nu^\gamma = \operatorname{Law}(\bs X^\gamma,\bs Y^\gamma, A^\gamma)$ 
converges weakly to a measure $\nu$ along a subsequence as $\gamma\downarrow 0$. 
Let $\pi_{X,Y}:\cC_{d n}\times(\cD_d)^n\times\cC_d \to \cC_{d n}\times(\cD_d)^n$ denote the projection $\pi_{X, Y}(x,y,a) = (x,y)$.
Then $\operatorname{Law}(\bs X^\gamma,\bs Y^\gamma)$ converges weakly to $\mu = (\pi_{X,Y})_\sharp \nu$ along the same subsequence.

Consider the probability space 
\[(\Omega,\cF^0,\bP^0), \quad \Omega=\cC_{dn}\times(\cD_d)^n\times\cC_d, \quad \cF^0=\cB(\Omega), \quad \bP^0=\nu. \] 
Let $(\bs X,\bs Y,A)$ be the canonical process on
$\Omega$, and let $\cF_t^0\eqdef\sigma(\bs X_s,\bs Y_s,A_s:0\leq s\leq
t)$ be its raw natural filtration. Denote by $\cN$ the collection of
all subsets of $\bP^0$-null sets in $\cF^0$. Set
$\cF\eqdef\cF^0\vee\cN$ and
$\cF_t\eqdef\bigcap_{u>t}(\cF_u^0\vee\cN)$, and let $\bP$ be the
completion of $\bP^0$ on $\cF$. Thus,
$(\Omega,\cF,(\cF_t)_{t\geq0},\bP)$ is the usual augmentation of the
canonical filtered probability space. In particular, $(\cF_t)$ is
complete and right-continuous, while $(\bs X,\bs Y,A)$ remains adapted
and has distribution $\nu$.

We define the process $\Theta$ as component-wise limsup of the backward difference
\begin{equation}\label{eq:def-A-prime-left}
    (\Theta_t)^{j} = \limsup_{m\to\infty} m(A_t^j-A_{t-1/m}^j) \,, \quad A = (A^1, \dots, A^d)\,,       
  \end{equation}
  with the convention that $\Theta_0=0$.
  Since every $A^\gamma$ is $R'$-Lipschitz continuous (for a common constant $R'$), and vanishes at
the origin, the same properties hold $\nu$-almost surely for $A$.
Hence, by Rademacher's theorem,
\[
A_t=\int_0^t\Theta_s\,ds,
\qquad t\geq0,
\]
almost surely, where, by definition, $\Theta$ is progressively measurable.

Define now, when $\sigma > 0$, the limiting noise process $\bs B = (B^1,\dots,B^n)$ as
\begin{equation}\label{eq:prop-decomposition}
       B_t^i = \sigma^{-1} \left( \inX_t^i - \inX_0^i -\int_0^t (\beta \Theta_s + \bar \beta  Y^i_s - \inX^i_s) ds \right) ,\quad i\in [n].
\end{equation}

 Our aim is to establish that $\bs B$ is an adapted $dn$-dimensional standard Brownian motion under~$\bP$.
\begin{remark}
When $\sigma=0$, the Brownian motion $\bs B$ does not enter the stochastic differential inclusion. Whenever needed in
Def.~\ref{def:weak-solution}, we enlarge the probability space and take
$\bs B$ to be an arbitrary standard $dn$-dimensional adapted Brownian motion
independent of $(\bs X,\bs Y,A)$. We adopt this convention throughout.
\end{remark}

 Throughout the rest of the section, $(\Omega, \cF, (\cF_t), \bP, \bs X, \bs Y, A, \Theta, \bs B)$ are fixed, as defined above.
\begin{prop}
  \label{prop:brownian}
$\bs B$ is a $dn$-dimensional standard $(\cF_t)$-Brownian motion.
\end{prop}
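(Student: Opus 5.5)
We will use Lévy's characterization through a martingale-problem argument, passing the relevant martingale identities from the discrete level to the limit by weak convergence. Since $\bs B$ is continuous and adapted (by its definition \eqref{eq:prop-decomposition}, with $\Theta$ progressively measurable), it suffices to show the following: for every $0\leq s<t$, every $p\in\bN$, every $0\leq s_1<\dots<s_p\leq s$, and every bounded continuous $\phi$ of $(\bs X_{s_l},\bs Y_{s_l},A_{s_l})_{l\leq p}$, we have
\[
\bE\bigl[(B_t-B_s)\phi\bigr]=0,\qquad
\bE\bigl[\bigl((B^{i,a}_t-B^{i,a}_s)(B^{j,b}_t-B^{j,b}_s)-\delta_{ij}\delta_{ab}(t-s)\bigr)\phi\bigr]=0 .
\]
These identities make $\bs B$ and $B^{i,a}B^{j,b}-\delta_{ij}\delta_{ab}t$ martingales for the raw filtration $\cF^0_t$. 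Because $\bs B$ is continuous and uniformly integrable on compacts, they extend to the usual augmentation $\cF_t$ by right-continuity and completion. Lévy's theorem then concludes. To handle the fact that the $s_l$ must avoid jump times of $\bs Y$, we restrict them to the co-countable set of times $u$ with $\bP(\bs Y_{u}\neq\bs Y_{u-})=0$, and recover all times by right-continuity.

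The key observation is that $\bs B$ is a continuous functional of $(\bs X,\bs Y,A)$:
\[
\sigma B^i=\Psi^i(\bs X,\bs Y,A)\eqdef X^i-X^i_0-\beta A-\bar\beta\,\iota(Y^i)+\iota(X^i).
\]
Here $\iota$ is continuous from $\cD_d$ to $\cC_d$, since $J_1$ convergence with bounded paths implies a.e. convergence. By Lemma~\ref{lem:decompX}, $\Psi^i(\bs X^\gamma,\bs Y^\gamma,A^\gamma)=M^{i,\gamma}+r^{i,\gamma}$. We then verify the identities at level $\gamma$ with $M^\gamma$ in place of $\sigma B$. On the grid, $M^{i,\gamma}_{k\gamma}=\sigma\sqrt\gamma\sum_{l\leq k}\xi^i_l$, and $\xi_{k+1}$ is independent of $\mathcal G_k\eqdef\sigma(\bs x_0,\xi_1,\dots,\xi_k)$, which contains $(\bs X^\gamma_u,\bs Y^\gamma_u,A^\gamma_u)_{u\leq k\gamma}$. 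Hence, for grid times $s,t$, $\bE[(M^\gamma_t-M^\gamma_s)\phi^\gamma]=0$ and the quadratic identity holds with $\sigma^2\delta_{ij}\delta_{ab}(t-s)$. For general $s,t$ we round $s$ up and $t$ down to the grid. The resulting error is at most $O(\sqrt\gamma)$ in $L^2$, since $\sup_{k}\bE\|\xi_k\|^2<\infty$ and each interpolation piece has size $\sqrt\gamma\|\xi\|$.

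We then pass to the limit. By Prop.~\ref{prop:tightness-gamma}, $r^{\gamma}\to0$ in probability, so $\Psi(\bs X^\gamma,\bs Y^\gamma,A^\gamma)-M^\gamma\to0$. The evaluation maps at $s_l$ are continuous at $\nu$-a.e. path because of the choice of the $s_l$, so the continuous mapping theorem gives convergence in law of $(\Psi(\cdot)_s,\Psi(\cdot)_t,\phi(\cdot))$. The main obstacle is uniform integrability of the products, because $\Psi$ is unbounded. We take it from $M^\gamma$: its increments have uniformly bounded fourth moments (Gaussian sums, $\bE\|M^\gamma_t-M^\gamma_s\|^4\leq C(t-s)^2$). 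Together with $r^\gamma\to0$, truncation arguments give convergence of the expectations. Concretely, we replace $M^\gamma+r^\gamma$ by $M^\gamma$ using $\bE\sup_{t\le T}\|r^\gamma_t\|\to0$, which is shown in Prop.~\ref{prop:tightness-gamma}, and for the quadratic term we use the product bound $\|M\|\,\|r\|$ via the Cauchy--Schwarz inequality after truncating $r$. The conclusion is that the identities hold in the limit with $\sigma B$, and dividing by $\sigma>0$ gives the claim.
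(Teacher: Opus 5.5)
Your proof is correct, but it takes a somewhat different route from the paper.

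\textbf{The paper's route.} The paper first identifies the law of $\bs B$. Since $\bs M^\gamma=\Psi(\bs X^\gamma,\bs Y^\gamma,A^\gamma)-r^{\gamma}$ (in your notation) and $r^{i,\gamma}\to0$ in probability, Slutsky's lemma gives
\[
(\bs X^\gamma,\bs Y^\gamma,A^\gamma,\sigma^{-1}\bs M^\gamma)\Rightarrow(\bs X,\bs Y,A,\bs B).
\]
Donsker's theorem then shows that $\bs B$ has the law of a standard Brownian motion. After that, the paper proves only the linear martingale identity. Its test functions are functions $\varphi$ of the whole restricted path $(\bs X,\bs Y,A)_{[0,t]}$, taken at times $t$ where $\bs Y$ is a.s.\ continuous, and it identifies the resulting $\sigma$-field with $\cF^0_t$ via Billingsley. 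Uniform integrability comes from second moments of $\bs M^\gamma$. Finally it reads off $[\bs B]_t=tI_{dn}$ from the law before applying L\'evy's characterization.

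\textbf{Your route.} You avoid the invariance principle altogether. You verify the full martingale problem, linear and quadratic identities, at the discrete level directly from the conditional mean and covariance of $\xi_{k+1}$ given $\mathcal G_k$. You use finite-dimensional cylinder test functions with a monotone-class extension, and fourth moments for uniform integrability.

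\textbf{What each buys.} Your argument is more self-contained. It would apply verbatim to martingale-difference noise with identity conditional covariance and a uniform $(2+\epsilon)$-moment bound. The paper's argument is shorter once Donsker is granted, and needs only second moments in the martingale step.

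\textbf{A simplification.} You do not need the truncation and Cauchy--Schwarz manipulations involving $r^{i,\gamma}$. The Slutsky observation above already gives joint convergence of $\bs M^\gamma$, together with the data, to $\sigma\bs B$. So all uniform integrability, including for the quadratic identity, can be taken from $\bs M^\gamma$ alone; this is what the paper does.
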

\begin{proof}
We focus on the case $\sigma>0$. By Eq.~\eqref{eq:decompX}, for every $i\in[n]$,
\[
\sigma^{-1}M^{i,\gamma}
=
\sigma^{-1} \left(X^{i,\gamma}-X_0^{i,\gamma}
-\beta A^\gamma
-\bar\beta\,\iota(Y^{i,\gamma})
+\iota(X^{i,\gamma})
-r^{i,\gamma}\right).
\]
By continuity of $\iota$ (Lemma~\ref{lem:iota-continuous}), the
convergence of $\nu^\gamma$ to $\nu$, the convergence of
$r^{i,\gamma}$ to zero in probability, and the definition \eqref{eq:prop-decomposition} of $\bs B$, we obtain by Slutsky's lemma
$\left(\bs X^\gamma,\bs Y^\gamma,A^\gamma,\sigma^{-1}\bs M^\gamma\right)
\Longrightarrow
(\bs X,\bs Y, A,\bs B)$.
On the other hand, Donsker's theorem shows that
$\sigma^{-1}\bs M^\gamma$ converges weakly to a standard
$dn$-dimensional Brownian motion. Hence $\bs B$ has the law of a
standard Brownian motion. It remains to prove that $\bs B$ is an $(\cF_t)$-martingale.

Consider $0<t<s$.
Define $\cT$ as the set of $u$ such that $\bs Y$ is continuous at $u$ with $\bP$-probability one. Assume first that $t\in \cT$.
Let $\psi_t:(\cD_d)^n \to D([0,t], \bR^d)^n$ be the projection which to every $\bs y$ associates its restriction $\bs y_{[0,t]}$ on the interval $[0,t]$. As $\bs Y$ belongs a.s.
to the continuity set of $\psi_t$, we have by the continuous mapping theorem that
\[
\left(
\sigma^{-1}(\bs M_s^\gamma-\bs M_t^\gamma),
\bs X^\gamma_{[0,t]},
\bs Y^\gamma_{[0,t]},
A^\gamma_{[0,t]}
\right)
\Longrightarrow
\left(
\bs B_s-\bs B_t,
\bs X_{[0,t]},
\bs Y_{[0,t]},
A_{[0,t]}
\right).
\]
Let $\varphi:C([0,t],\bR^{dn})\times (D([0,t],\bR^{d}))^n\times C([0,t],\bR^d)\to\bR$ be bounded and continuous.
Denote $H^\gamma\eqdef \varphi(\bs X^\gamma_{[0,t]},\bs Y^\gamma_{[0,t]},A^\gamma_{[0,t]})$ for each $\gamma>0$.
Upon noting that
$$
\bE\left[
(\bs M_s^\gamma-\bs M_t^\gamma)H^\gamma
\right] = \bE\left[
(\bs M_{\lceil t/\gamma\rceil\gamma}^\gamma-\bs M_t^\gamma)H^\gamma
\right]\,,
$$
we obtain:
\[
\left\|
\bE\left[
(\bs M_s^\gamma-\bs M_t^\gamma)H^\gamma
\right]
\right\|
\leq
\sigma\sqrt{\gamma}\,\|\varphi\|_\infty
\bE\|\bs\xi_{\lceil t/\gamma\rceil}\|
\longrightarrow0 \ \text{ as }\gamma\to 0.
\]
Using also the uniform integrability criterion $\sup_{\gamma\in(0,1)}
\bE\|\bs M_s^\gamma-\bs M_t^\gamma\|^2<\infty$, it follows that
\[
\bE\left[
(\bs B_s-\bs B_t)
\varphi\left(
\bs X_{[0,t]},\bs Y_{[0,t]},A_{[0,t]}
\right)
\right]=0.
\]
This implies that $\bE\left[(\bs B_s-\bs B_t)H\right]=0$ for every
bounded r.v. $H$ measurable w.r.t. the Borel $\sigma$-algebra
of $C([0,t],\bR^{dn})\times D([0,t],\bR^{dn})\times C([0,t],\bR^d)$.
The latter coincides with $\cF^0_t$ by \cite[Th.~12.5.]{billingsley1999convergence}
Thus, 
\[
\bE\left[(\bs B_s-\bs B_t)H\right]=0
\]
for every bounded $(\cF_t^0\vee \cN)$-measurable random variable. 

Set now arbitrary $0\leq t<s$ and let $H$ be bounded and
$\cF_t$-measurable. Since $\mathcal T$ is dense by \cite[pp.138]{billingsley1999convergence}, there exists a
sequence $(t_q)$ in $\mathcal T$ such that
$t_q\downarrow t$ and $t_q<s$. By the definition of the
right-continuous augmentation,
$\cF_t\subset\cF_{t_q}^0\vee\cN$, and therefore
\[
\bE\left[(\bs B_s-\bs B_{t_q})H\right]=0.
\]
Since $\bs B$ has the law of a Brownian motion,
$\bs B_{t_q}\to\bs B_t$ in $L^1$. Letting $q\to\infty$ gives
\[
\bE\left[(\bs B_s-\bs B_t)H\right]=0.
\]
Thus, $\bs B$ is a continuous $(\cF_t)$-martingale. Since it has the
law of a standard Brownian motion, its quadratic covariation satisfies
$[\bs B]_t=tI_{dn}$. Lévy's characterization then shows that $\bs B$
is a standard $dn$-dimensional $(\cF_t)$-Brownian motion \cite[Th.~3.3.16]{shreve1991brownian}.
\end{proof}

\subsection{Properties of the limiting global best}

Recall the definition of  $\Theta$ in Eq.~\eqref{eq:def-A-prime-left}
as the time derivative of $A$, and the definition of $F$ in Eq.~(\ref{eq:F}).
\begin{prop}
  \label{prop:limit-global-best}
  With probability one, $\Theta$ satisfies
  $$
  \Theta_t \in \mathrm{conv}\, F(\bs Y_t)
  $$
  for Lebesgue-almost every $t>0$.
\end{prop}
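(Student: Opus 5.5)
The plan is to prove the statement pathwise on a convenient realization and then transfer it back. The event
\[
E\eqdef\Bigl\{\Theta_t\in\conv F(\bs Y_t)\ \text{for Lebesgue-a.e. } t>0\Bigr\}
\]
depends only on the pair $(\bs Y,A)$. Indeed, $\Theta$ is a fixed measurable functional of $A$ through~\eqref{eq:def-A-prime-left}. Hence $\bP(E)$ is determined by the law of $(\bs Y,A)$, and I may work on any probability space carrying processes with the same law. By Prop.~\ref{prop:tightness-gamma} and the Skorokhod representation theorem (the spaces are Polish), I fix along the chosen subsequence copies $(\tilde{\bs Y}^\gamma,\tilde A^\gamma)$ and $(\tilde{\bs Y},\tilde A)$ with the correct laws. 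These satisfy $\tilde{\bs Y}^\gamma\to\tilde{\bs Y}$ in $(\cD_d)^n$ and $\tilde A^\gamma\to\tilde A$ in $\cC_d$ almost surely. I also keep the discrete relation $\tilde A^\gamma=\int_0^\cdot\tilde\Theta^\gamma_s\,ds$ with $\tilde\Theta^\gamma_s\in F(\tilde{\bs Y}^\gamma_s)$. This relation holds almost surely because it is a measurable property of the joint law of $(\bs Y^\gamma,A^\gamma)$: $A^\gamma$ is the integral of a piecewise constant selection of $F(\bs Y^\gamma)$, and both coincide at grid points. I drop the tildes below.

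The first deterministic ingredient is that $F$ has closed graph with uniformly bounded values on the compact set where all personal bests live (radius $R'$ as in Lemma~\ref{lem:moment-X}). Let $\bs y_k\to\bs y$ and $z_k\in F(\bs y_k)$. Passing to a subsequence, I may take $z_k=y_k^{i}$ with a fixed index $i$. Then $f(y_k^i)\le f(y_k^j)$ for all $j$, and continuity of $f$ gives $f(y^i)\le f(y^j)$, so $\lim z_k=y^i\in F(\bs y)$. Consequently, for every $\varepsilon>0$ there is $\eta>0$ such that $\|\bs y'-\bs y\|<\eta$ implies $F(\bs y')\subset F(\bs y)+B(0,\varepsilon)$. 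The right-hand side is contained in the closed convex set $C_\varepsilon\eqdef\conv F(\bs y)+\bar B(0,\varepsilon)$.

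Now fix a path in the full-measure event where the convergences hold and $A$ is $R'$-Lipschitz. I take $t>0$ such that $\bs Y$ is continuous at $t$ and $A$ is differentiable at $t$. Only countably many discontinuities are excluded, and Rademacher's theorem excludes only a null set, so this covers a.e. $t$. At such $t$, the backward limsup~\eqref{eq:def-A-prime-left} equals $A'_t$, hence
\[
\Theta_t=\lim_{h\downarrow0}\frac{A_t-A_{t-h}}{h}.
\]
The key step, which I expect to be the main technical point, is a local uniform convergence: for every $\eta>0$ there exist $h_0>0$ and $\gamma_0>0$ such that
\[
\sup_{s\in[t-h_0,t]}\|\bs Y^\gamma_s-\bs Y_t\|<\eta\qquad\text{for all }\gamma<\gamma_0.
\]
This follows from $J_1$ convergence at a continuity point of the limit. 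Take time changes $\lambda_\gamma$ with $\|\lambda_\gamma-\mathrm{id}\|_\infty\to0$ and $\|\bs Y^\gamma-\bs Y\circ\lambda_\gamma\|_{\infty,T}\to0$. Then $\lambda_\gamma([t-h_0,t])\subset[t-2h_0,t+h_0]$ for small $\gamma$, and continuity of $\bs Y$ at $t$ makes $\|\bs Y_u-\bs Y_t\|$ small on that window.

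Combining the two ingredients: for $h\le h_0$ and $\gamma<\gamma_0$, we have $\Theta^\gamma_s\in F(\bs Y^\gamma_s)\subset C_\varepsilon$ for all $s\in[t-h,t]$. Since $C_\varepsilon$ is closed and convex, the average
\[
\frac{A^\gamma_t-A^\gamma_{t-h}}{h}=\frac1h\int_{t-h}^t\Theta^\gamma_s\,ds
\]
also lies in $C_\varepsilon$. Letting $\gamma\downarrow0$, uniform convergence $A^\gamma\to A$ gives $(A_t-A_{t-h})/h\in C_\varepsilon$. Letting $h\downarrow0$ gives $\Theta_t\in C_\varepsilon$, and since $\varepsilon$ is arbitrary and $\conv F(\bs Y_t)$ is compact, $\Theta_t\in\conv F(\bs Y_t)$. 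This holds for a.e. $t$ on almost every path of the Skorokhod copy. The set $\{(\omega,t):\Theta_t\notin\conv F(\bs Y_t)\}$ is jointly measurable, since $\Theta$ is progressively measurable and $\bs y\mapsto\conv F(\bs y)$ is upper semicontinuous. Therefore the event $E$ is a measurable functional of $(\bs Y,A)$, and equality in law transfers the conclusion to the canonical space, proving Prop.~\ref{prop:limit-global-best}.
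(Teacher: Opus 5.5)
Your proof is correct, and it takes a somewhat different route from the paper's. Both arguments start from the Skorokhod representation and the discrete inclusion $\Theta^\gamma_s\in F(\bs Y^\gamma_s)$, and both finish by averaging over a shrinking window and using closedness and convexity. The paper, however, relies only on \emph{pointwise} convergence $\tilde{\bs Y}^\gamma_s\to\tilde{\bs Y}_s$ at continuity points $s$ of the limit. From this it shows $d(\tilde\Theta^\gamma_s,F(\tilde{\bs Y}_s))\to0$ on a full-measure set $S$. It then bounds the distance of the centered difference quotient to the closed convex set $C_h=\conv\bigl(\bigcup_{s\in[t-h,t+h]}F(\tilde{\bs Y}_s)\cup F(\tilde{\bs Y}_{s-})\bigr)$ via Jensen and dominated convergence. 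Finally it needs Lemmas~\ref{lem:lim-F-leftright} and~\ref{lem:small-h} to shrink $C_h$ to $\conv F(\tilde{\bs Y}_t)$ as $h\downarrow0$.

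You instead use the stronger fact that $J_1$ convergence is locally uniform near a continuity point $t$ of the limit, combined with upper semicontinuity of $F$ at the single point $\bs Y_t$. This places every $\Theta^\gamma_s$, $s\in[t-h,t]$, in the fixed enlargement $\conv F(\bs Y_t)+\bar B(0,\varepsilon)$ at once. As a result you need no dominated convergence, no left-limit sets, and no limsup-of-convex-hulls lemma. Your backward window also matches the backward definition~\eqref{eq:def-A-prime-left} of $\Theta$ directly. Your transfer back to the canonical space is more explicit than the paper's: you note that $\Theta$ is a measurable functional of $A$ and that the event is jointly measurable.

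One small notational point: convergence in $(\cD_d)^n$ is in the product topology, so each component $Y^{i,\gamma}$ has its own time change $\lambda^i_\gamma$ rather than a common $\lambda_\gamma$. Run your local-uniform estimate componentwise and take the minimum of the resulting $h_0$'s and of the $\gamma_0$'s; nothing else in the argument changes.
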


\begin{proof}

Using Skorokhod's theorem, consider processes $(\tilde{\bs Y}^\gamma, \tilde A^\gamma)$ and
$(\tilde{\bs Y}, \tilde A)$ on some other probability space $(\tilde\Omega, \tilde\cF,\tilde\bP)$, having the same distributions as $(\bs Y^\gamma, A^\gamma)$ and
$(\bs Y,  A)$ respectively, and such that  $(\tilde{\bs Y}^\gamma, \tilde A^\gamma)\to (\tilde{\bs Y}, \tilde A)$
everywhere, in the product topology of $(\cD_d)^n\times \cC_d$, along the same subsequence $\gamma_k$. Throughout this proof
we assume $\gamma\in\{\gamma_k, k\in \bN\}$.

With $\tilde \bP$-probability one, the paths of $\tilde A^\gamma$ are Lipschitz, and thus differentiable Lebesgue-almost everywhere. 
We define $\tilde{\Theta}^\gamma_s(\omega) = \frac{d}{ds}\tilde{A}^\gamma_s(\omega)$ at points $(s, \omega)$ where the derivative exists, 
and $\tilde{\Theta}^\gamma_s(\omega) = 0$ otherwise. This ensures that $(s, \omega) \mapsto \tilde{\Theta}^\gamma_s(\omega)$ is jointly 
measurable in $\cB(\bR_+)\otimes \tilde \cF$, and that almost surely, for all $t \geqslant h > 0$,
\begin{equation*}
    \tilde{A}^\gamma_{t+h} - \tilde{A}^\gamma_{t-h} = \int_{t-h}^{t+h} \tilde{\Theta}^\gamma_s \, ds.
\end{equation*}
Note that $\frac{d}{ds}A^\gamma_s = \Theta^\gamma_s \in F(Y^\gamma_s)$ by definition \eqref{eq:def_Atgamma}. Therefore, the equality in law 
guarantees that $\tilde{\Theta}^\gamma_s \in F(\tilde{Y}^\gamma_s)$ Lebesgue-a.e., with $\tilde \bP$-probability one.
We fix an $\omega\in \tilde \Omega$ such that this property holds for every $\gamma_k$ and omit the dependence on $\omega$ in the sequel.

Let $D_k$ denote the set of points $s$ where $\tilde A^{\gamma_k}$ is differentiable, and let $\operatorname{Cont}(\tilde Y)$ 
be the continuity set of $\tilde Y$. We define 
\[S =  \operatorname{Cont}(\tilde Y)\cap \bigcap_{k\in \bN} D_k.\] Note that because $\tilde Y$ is
càdlàg, $\tilde A^\gamma_k$ are Lipschitz, and the latter intersection is countable, the complement $\bR_+\setminus S$ has zero Lebesgue measure.
Furthermore, since $\tilde Y^{i,\gamma}_s \to \tilde Y^{i}$ in $\cD_d$, for every $i\in[n]$, we have pointwise convergence at any continuity point along $\gamma_k$:
$$
\tilde Y^{i,\gamma}_s \to \tilde Y^i_s, \qquad \forall s \in S .
$$

Let $s\in S$, and let $I_s = \argmin_{i\in[n]} f(\tilde Y^i_s)$. For any $j\notin I_s$, $f(\tilde Y^j_s)> \min_i f(\tilde Y^i_s)$. Since $f$
is continuous and $\tilde Y^{i,\gamma}_s \to \tilde Y^i_s$, there exists $\gamma_0>0$ such that, for any $\gamma < \gamma_0$ and any $j\notin I_s$, $f(\tilde Y^{j,\gamma}_s)> \min_i f(\tilde Y^{i,\gamma}_s)$. Hence, if $\gamma$ is sufficiently small, then $\argmin_i f(\tilde Y^{i,\gamma}_s) \subset I_s$, and therefore 
$\tilde \Theta^\gamma_s \in \{\tilde Y^{i,\gamma}_s\,:\, i \in I_s\}$.
Consequently, the distance
 $d(\tilde \Theta^\gamma_s, F(\tilde{\bs Y}_s)) = \inf_{y\in F(\tilde{\bs Y}_s)} \|\tilde \Theta^\gamma_s - y\|$ 
admits the bound
$$
  d(\tilde \Theta^\gamma_s, F(\tilde{\bs Y}_s)) \leq \max_{i\in I_s} \|\tilde Y^{i,\gamma}_s - \tilde Y^i_s\| .
$$
Thus, $d(\tilde \Theta^\gamma_s, F(\tilde{\bs Y}_s)) \to 0$ along the sequence $\gamma_k$ for any 
$s\in S$.

Let $C_h = \conv\big( \bigcup_{s\in [t-h,t+h]}  F(\tilde{\bs Y}_s)\cup F(\tilde{\bs Y}_s^-) \big)$, where we recall that $\tilde{\bs Y}_s^-$ denotes the left limit of $\tilde{\bs Y}$ at $s$.
Since $C_h$ is convex, $y\mapsto d(y, C_h)$ is a convex continuous function on $\bR^d$. Applying Jensen's inequality, 
and recalling that $[t-h, t+h]\setminus S$ has zero Lebesgue measure, we obtain
\begin{align}\label{eq:dist-A-quotient}
  d\left( \frac{\tilde A^\gamma_{t+h} - \tilde A^\gamma_{t-h}}{2h}, C_h \right) & = 
  d\left(\frac{1}{2h} \int_{[t-h, t+h]} \tilde \Theta^\gamma_s  ds , C_h\right)\\
  &\leq
  \frac{1}{2h} \int_{[t-h, t+h]} d(\tilde \Theta^\gamma_s, C_h) ds
  \\
  &
  = \frac{1}{2h} \int_{[t-h, t+h]\cap S} d(\tilde \Theta^\gamma_s, C_h) ds.
\end{align}
The integrand $d(\tilde \Theta^\gamma_s, C_h)$ is uniformly bounded by 
$2 R^\prime$ and converges to zero pointwise on $S$ along $\gamma_k$, as proven above. 
By dominated convergence theorem, we have
$$
  \lim_{\gamma_k\downarrow 0} \frac{1}{2h} \int_{[t-h, t+h]\cap S} d(\tilde \Theta^\gamma_s, C_h)ds = 0.
$$
It follows that
$
  d\left( \frac{\tilde A_{t+h} - \tilde A_{t-h}}{2h}, C_h \right) = 
  \lim_{\gamma\downarrow 0} d\left( \frac{\tilde A^\gamma_{t+h} - \tilde A^\gamma_{t-h}}{2h}, C_h \right) = 0$. Since $\tilde {\bs Y}$ is càdlàg, $C_h$ contains all its limit points, i.e. it is closed. Hence, we obtain
$$
  \frac{\tilde A_{t+h} - \tilde A_{t-h}}{2h} \in 
  \conv\Bigg( \bigcup_{s\in [t-h,t+h]}  F(\tilde{\bs Y}_s)\cup F(\tilde{\bs Y}_s^-) \Bigg).
$$
Let $t$ be a point of differentiability of $\tilde A$ and $t\in\operatorname{Cont}(\tilde{\bs Y})$. Then, denoting by $\tilde A'_t$ the derivative of $\tilde A$ at this point,
\begin{align*}
  \tilde A_t' &\in \limsup_{h\downarrow 0} \mathrm{conv}\!\!\bigcup_{s\in[t-4h,t+4h]}F(\tilde{\bs Y}_s)\cup F(\tilde{\bs Y}_s^-)\\
  &\subset \mathrm{conv} \left( F(\tilde{\bs Y}_t)\cup F(\tilde{\bs Y}_t^-) \right)\,,
\end{align*}
where the last inclusion is due to Lem.~\ref{lem:small-h}. By continuity of $\tilde{\bs Y}$ at $t$,
$ F(\tilde{\bs Y}_t)\cup F(\tilde{\bs Y}_t^-) =  F(\tilde{\bs Y}_t)$. Then for Lebesgue-a.e. $t>0$, $\tilde A'_t \in \mathrm{conv} F(\tilde{\bs Y}_t)$. 
Recalling that this holds for $\tilde \bP$-a.e. $\omega\in\tilde\Omega$, we conclude that with probability one, 
$\Theta_t = A^\prime_t  \in \mathrm{conv} F(\bs Y_t)$ for Lebesgue-a.e. $t>0$.

\end{proof}


\subsection{Properties of the limiting personal best}

Recall that $(\bs X,\bs Y)$ is the limit in law of the family of processes $(\bs X^\gamma,\bs Y^\gamma)_\gamma$ as $\gamma\downarrow 0$ along a subsequence.
\begin{prop}
  \label{prop:limit-personal-best} With probability one, for all $i\in [n]$,
$$
\forall t\geq 0, \forall s\in [0,t),\ Y^i_t\in\arg\min \{f(u):u\in \{Y^i_s\}\cup X^i([s,t])\}\,.
$$
\end{prop}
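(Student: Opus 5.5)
Our plan is to show that the property holds for the discrete interpolated processes, express it through a closed subset of $\cC_d\times\cD_d$, and pass to the limit with the Portmanteau theorem (equivalently, through a Skorokhod representation). Fix $i$. Let $\mathcal G$ be the set of pairs $(x,y)\in\cC_d\times\cD_d$ such that for all $0\le s<t$ we have $f(y_t)\le f(y_s)$, $f(y_t)\le \min_{u\in[s,t]} f(x_u)$, and $y_t\in\{y_s\}\cup x([s,t])$. Being in $\mathcal G$ is exactly the relaxed record condition of the statement. We will show three things: $\bP((X^{i,\gamma},Y^{i,\gamma})\in\mathcal G_\gamma)=1$ for a suitable approximation $\mathcal G_\gamma$ of $\mathcal G$; any limit of pairs in $\mathcal G_\gamma$ lies in $\mathcal G$; and therefore the weak limit $(X^i,Y^i)$ is in $\mathcal G$ almost surely.

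\emph{Discrete step.} For the interpolations we have $Y^{i,\gamma}_t=y_k$ with $k=\lfloor t/\gamma\rfloor$. If $l=\lfloor s/\gamma\rfloor$, then $y_k$ is either $y_l$ or some $x_m$ with $l<m\le k$, and $f(y_k)\le f(x_m)$ for every $m\le k$. The interpolation $X^{i,\gamma}$ passes through the grid points $x_m$, and $x_m=X^{i,\gamma}_{m\gamma}$ with $m\gamma\in[s-\gamma,t]$. Only the grid values are controlled, though: $f(Y^{i,\gamma}_t)\le f(X^{i,\gamma}_u)+\omega_f(w_T(X^{i,\gamma},\gamma))$, where $\omega_f$ is the modulus of continuity of $f$ on a compact set. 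So the condition holds exactly at grid points and up to an error $\varepsilon_\gamma\to0$ in probability in general. We therefore define $\mathcal G^\varepsilon$ (with the window $[s-\varepsilon,t]$ and additive slack $\varepsilon$ in the inequalities) and check that $\bP((X^{i,\gamma},Y^{i,\gamma})\in\mathcal G^{\varepsilon})\to1$ for each fixed $\varepsilon>0$. The tightness established in Prop.~\ref{prop:tightness-gamma} controls $w_T(X^{i,\gamma},\gamma)$.

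\emph{Closedness step (main obstacle).} We must show that $\mathcal G^\varepsilon$ is closed in $\cC_d\times\cD_d$ for the $J_1$ topology, or at least that its limit points satisfy $\bigcap_\varepsilon \overline{\mathcal G^\varepsilon}\subset\mathcal G$. Then Portmanteau gives $\bP((X^i,Y^i)\in\overline{\mathcal G^\varepsilon})=1$ for every $\varepsilon$, and hence the result. The difficulty is that $J_1$ convergence $y^n\to y$ only gives $y^n_{\lambda_n(t)}\to y_t$ with $\lambda_n\to\mathrm{id}$. We will therefore first prove the property at pairs $s<t$ that are continuity points of $y$, where $y^n_s\to y_s$ and $y^n_t\to y_t$. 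The membership $y^n_t\in\{y^n_s\}\cup x^n([s-\varepsilon,t])$ passes to the limit because $x^n\to x$ uniformly on compacts and $x([s-\varepsilon,t])$ is compact: either infinitely often $y^n_t=y^n_s$, which gives $y_t=y_s$, or $y^n_t=x^n_{u_n}$ with $u_n\to u$ along a subsequence, which gives $y_t=x_u$. The inequalities pass by continuity of $f$. Letting $\varepsilon\downarrow0$ yields the condition for continuity times $s<t$.

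Finally we extend to all $0\le s<t$ using right-continuity. Take continuity points $s_k\downarrow s$ and $t_k\downarrow t$ with $s_k<t_k$. The inequalities pass to the limit directly. For the membership, $y_{t_k}\in\{y_{s_k}\}\cup x([s_k,t_k])$ with $y_{s_k}\to y_s$ and $x([s_k,t_k])$ converging to within $x([s,t])$ up to the compact neighbourhood, so $y_t\in\{y_s\}\cup x([s,t])$. The only subtlety is the case $s=0$ when $0$ is not a continuity point. This is handled using $Y_0=X_0$, which follows from $Y^\gamma_0=X^\gamma_0$ together with the fact that the projection at time $0$ is continuous for $J_1$. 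The membership combined with the two inequalities is precisely the $\arg\min$ condition, which completes the proof.
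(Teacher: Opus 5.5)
Your proposal is correct. It follows the paper's overall scheme (the discrete property holds up to a vanishing slack, closed sets, Portmanteau), but it organizes the closedness step differently.

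\textbf{The paper's route.} The paper splits the condition into two correspondences:
\begin{itemize}
\item the record inequality $f(y_t)\le\min_{[0,t]}f(x)+\delta$ on $[0,T)$, handled in Lem.~\ref{lem:A1-closed};
\item the exact, slack-free consistency condition $y_t\in\{y_s\}\cup x((s,t])$, handled in Lem.~\ref{lem:A2-closed}.
\end{itemize}
Closedness of the second is proved for arbitrary $s<t$ directly in $J_1$. It uses the time changes $\lambda_n$ and an auxiliary $r\in(s,t)$ with $y_r\neq y_t$, supplied by right-continuity. The $\arg\min$ statement is assembled afterwards: $Y_0=X_0$ gives $f(Y_t)=\min_{[0,t]}f(X)$, and hence $f(Y_t)$ is monotone.

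\textbf{Your route.} You put all three ingredients, including $f(y_t)\le f(y_s)$, into one set $\mathcal G$. You check the limit property only at continuity times of $y$, where $J_1$ convergence reduces to pointwise convergence, so no time changes are needed. You then recover all pairs $s<t$ from right-continuity of $y$ and continuity of $x$ and $f$. This is more elementary and self-contained, and it does not need $Y_0=X_0$ at all. What you give up is the sharper half-open interval $x((s,t])$ of the paper's intermediate statement, which the proposition does not need.

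\textbf{One fix is needed.} Your $\mathcal G^\varepsilon$ must carry a finite horizon ($t\le T$), and you then intersect over $T\in\bN$. The slack $\omega_f\bigl(w_T(X^{i,\gamma},\gamma)\bigr)$ is only controlled on compact time intervals. It also requires localizing $X^{i,\gamma}$ in a compact set with high probability. This mirrors the role of $T$ in the paper's $\cA_1^{\delta,T}$.

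\textbf{Two things you can drop.}
\begin{itemize}
\item The enlarged window $[s-\varepsilon,t]$ is unnecessary. If $y_k\neq y_l$, the latest record index lies in $(l,k]$, so the stored position is $X^{i,\gamma}_{m\gamma}$ with $m\gamma\in(s,t]$ exactly.
\item The ``subtlety at $s=0$'' is not real. In your extension step, taking continuity points $s_k\downarrow 0$ and using right-continuity of $y$ at $0$ already covers $s=0$, without invoking $Y_0=X_0$.
\end{itemize}
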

\begin{proof}
  We show that the following three statements
hold with probability one, for every $i\in[n]$:
\begin{align}
&Y_0^i=X_0^i, \label{eq:Y0-X0}\\
&\forall t\geq0,\quad
f(Y_t^i)\leq\min_{0\leq s \leq t}f(X_s^i),
\label{eq:Yrecord}\\
&\forall\,0\leq s<t,\quad
Y_t^i\in\{Y_s^i\}\cup X^i((s,t]).
\label{eq:Y-belongs-to-X}
\end{align}
Since $Y_0^{i,\gamma}=X_0^{i,\gamma}$ for every $\gamma>0$ and
evaluation at time zero is continuous on both $\cC_d$ and $\cD_d$,
the convergence $(X^{i,\gamma},Y^{i,\gamma})\Rightarrow(X^i,Y^i)$
implies \eqref{eq:Y0-X0}.

  We set $i\in [n]$, and omit the superscript $i$. We denote $(X,Y)$ for $(X^i,Y^i)$, and $(X^\gamma,Y^\gamma)$ for $(X^{i,\gamma},Y^{i,\gamma})$.
  Let us prove Eq. (\ref{eq:Yrecord}). For any $\delta,T>0$, define
  $$
  \cA_1^{\delta,T}(x) \eqdef \{y\in \cD_d\,:\,\forall 0\leq t< T,\ f(y_t)\leq \min_{s\leq t} f(x_s) +\delta\}.
  $$
  Observe that for any $0\leq t \leq T$,
$$
f(Y^{\gamma}_t) = \min_{0\leq k \leq \lfloor t/\gamma\rfloor} f( X^{\gamma}_{k\gamma}) \leq \min_{s\leq t} f( X^{\gamma}_s) + w_{f\circ  X^{\gamma}, T}(\gamma),
$$
where $ w_{f\circ  X^\gamma, T}(\gamma) = \sup_{s,t\in[0,T],\,|s-t|\leq \gamma} |f( X^\gamma_t) - f( X^\gamma_s)|$. Therefore, $ Y^\gamma \in \cA_1^{\delta, T} ( X^\gamma)$ whenever 
$w_{f\circ  X^{\gamma}, T}(\gamma) \leq \delta$. Since $f$ is continuous and $ X^\gamma$ is tight in $C([0,T],\bR^d)$, $f\circ  X^\gamma$ is tight in $C([0,T],\bR)$, which implies that $\bP(w_{f\circ  X^{\gamma}, T}(\gamma) \geq \delta) \to 0$ as $\gamma\downarrow 0$ for any $\delta>0$.
As a consequence, $$\bP( Y^\gamma \in \cA_1^{\delta,T} ( X^\gamma) ) \geq \bP\big(w_{f\circ  X^{\gamma}, T}(\gamma) \leq \delta\big) \to 1 \text{ as } \gamma \downarrow 0, \quad \forall \delta>0\, .
$$
By Lem.~\ref{lem:A1-closed}, $\graph(\cA_1^{\delta,T})$ is closed. By the Portmanteau theorem, $\bP( Y \in \cA_1^{\delta,T} ( X) )= 1$ for every $\delta>0$.
Thus, $Y \in \bigcap_{\delta\in 1/\bN^*, T\in \bN^*}\cA_1^{\delta,T} ( X)$ a.s. We have that (\ref{eq:Yrecord}) holds w.p.1.

Now consider the statement~(\ref{eq:Y-belongs-to-X}) (we still omit the superscript $i$).
  By construction of $Y^{\gamma}$, the condition stated in the proposition is satisfied for $(X^{\gamma},Y^{\gamma})$:
  \[\forall t\geq 0, \, \forall s\in [0,t),\;\; Y^{\gamma}_t \in \{Y^{\gamma}_s\}\cup X^{\gamma}((s,t]).
  \]
  Indeed, if $Y_t \neq Y_s$, the position stored at time $t$ must have been selected at a grid time belonging to $(s,t]$.
  By Lem.~\ref{lem:A2-closed} and the fact that $(X^{\gamma},Y^{\gamma})\Rightarrow (X,Y)$,
  one can pass to the limit as $\gamma\downarrow 0$, by the Portmanteau theorem. This completes the proof.
  
  It remains to show why these properties imply the statement of
the proposition. Taking $s=0$ in
\eqref{eq:Y-belongs-to-X} and using $Y_0^i=X_0^i$, we obtain $Y_t^i\in X^i([0,t])$.
Together with \eqref{eq:Yrecord}, this yields
\[
f(Y_t^i)=\min_{0\leq r\leq t}f(X_r^i),
\qquad t\geq0.
\]
Therefore, $t\mapsto f(Y_t^i)$ is nonincreasing. Hence, for
$0\leq s<t$,
\[
f(Y_t^i)\leq f(Y_s^i)
\quad\text{and}\quad
f(Y_t^i)\leq\min_{r\in[s,t]}f(X_r^i).
\]
Combining these inequalities with
\eqref{eq:Y-belongs-to-X} allows to conclude.
\end{proof}

\subsection{End of the proof of Th.~\ref{th:sdi}}

The proof is completed by putting together Eq.~(\ref{eq:prop-decomposition}), Prop.~\ref{prop:tightness-gamma}, Prop.~\ref{prop:brownian}, Prop.~\ref{prop:limit-global-best} and Prop.~\ref{prop:limit-personal-best}.

\section{Recovery of the personal- and global-best selectors (Th.~\ref{th:continuous-time-limit})}
\label{s:girsanov}

We now use the additional nondegeneracy assumptions to recover
the exact selectors from the relaxed characterization of Th.~\ref{th:sdi}, thereby proving
Th.~\ref{th:continuous-time-limit}. This crucially relies on the presence of Brownian noise (hence $\sigma > 0$). We describe below our standing assumptions for this section.

\begin{assumption}[Standing Assumption]\label{hyp:standing}
Assumptions~\ref{hyp:stab} and~\ref{hyp:random} are in force.
Moreover, consider a filtered probability space
$(\Omega,\cF,(\cF_t)_{t\geq0},\bP)$ 
supporting an adapted process $(\bs X,\bs Y)$ with paths in
$\cC_{dn}\times(\cD_d)^n$, a progressively measurable process
$\Theta$ with values in $\bR^d$, and a standard $dn$-dimensional
$(\cF_t)$-Brownian motion $\bs B$. We assume that $(\Omega,\cF,(\cF_t)_{t\geq0},\bP,\bs X,\bs Y,\Theta,\bs B)$
satisfy statements i)--iv) of Th.~\ref{th:sdi}, and that $(\cF_t)_{t \geq 0}$ satisfies the usual conditions.
\end{assumption}
In particular, the additional assumptions that we make with respect to the previous section are that $d\geq2$, $\sigma>0$,  $f$ has isolated
critical points, and $\mu_0$ is absolutely continuous with
respect to Lebesgue measure on $\bR^{dn}$.

\subsection{Avoidance of critical points}

\begin{prop}
\label{prop:avoid-critical-points}
Under Assumption~\ref{hyp:standing}, with probability one,
$X_t^i\notin\crit f$,
$\forall i\in[n],\ \forall t\geq0$.
\end{prop}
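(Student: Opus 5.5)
The plan is to use Girsanov's theorem to remove the drift, and then the polarity of points for Brownian motion in dimension $d\geq2$. Fix $i\in[n]$ and $T>0$. By Th.~\ref{th:sdi} ii), on $[0,T]$ we have
\[
X^i_t = X^i_0 + \int_0^t b^i_s\,ds + \sigma B^i_t,\qquad b^i_s\eqdef \beta\Theta_s+\bar\beta Y^i_s - X^i_s .
\]
The terms $\Theta_s$ and $Y^i_s$ are bounded by a deterministic constant $R'$. Indeed, $Y^i_s\in X^i([0,s])$ with $f(Y^i_s)\leq f(X^i_0)$ by Th.~\ref{th:sdi} iv), and $\Theta_s\in\conv F(\bs Y_s)$ by iii). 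Coercivity of $f$ and the compact support of $\mu_0$ then confine these terms to a fixed compact set. The term $X^i_s$ is not bounded, so Novikov's condition cannot be applied directly.

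To handle this, localize with $\tau_N\eqdef\inf\{t: \|X^i_t\|\geq N\}\wedge T$. The drift stopped at $\tau_N$ is bounded, so Novikov's condition holds. Girsanov's theorem then gives a probability $\bQ_N\sim\bP$ on $\cF_T$ under which
\[
W_t\eqdef X^i_0/\sigma + B^i_t + \sigma^{-1}\int_0^{t\wedge\tau_N} b^i_s\,ds
\]
is an $(\cF_t)$-Brownian motion started at $X^i_0/\sigma$, stopped appropriately. Under $\bQ_N$, the law of $X^i_0$ is unchanged because $\cF_0$-measurable variables keep their law. On $[0,\tau_N]$ we have $X^i_t=\sigma W_t$.

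The set $\crit f$ consists of isolated points, so it is countable; write it as $\{c_1,c_2,\dots\}$. In dimension $d\geq 2$, a Brownian motion started from any point hits a fixed point $c/\sigma$ at a positive time with probability zero. It also starts at $c$ only if $X^i_0=c$, and this has probability zero because $\mu_0\ll\mathrm{Leb}$, so the marginal of $X^i_0$ has a density. Taking the countable union over $c_k$, we get
\[
\bQ_N\bigl(\exists t\le\tau_N: X^i_t\in\crit f\bigr)=0 .
\]
By equivalence of measures, the same event has $\bP$-probability zero.

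Finally, let $N\to\infty$. Since $X^i$ is continuous, $\tau_N\uparrow T$ a.s. We then take the union over $T\in\bN$ and over $i\in[n]$.

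The main care point is the Girsanov step. We need the localization so that the exponential martingale is a true martingale. We also need that Girsanov applies on the given filtration, which satisfies the usual conditions, where $\bs B$ is an $(\cF_t)$-Brownian motion and $b^i$ is progressively measurable. An alternative that avoids the localization is to note that the drift has linear growth, so the Beneš condition gives a true martingale on $[0,T]$ directly.

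The polarity fact itself, $\bP_x(\exists t>0: W_t=c)=0$ for $d\geq2$, is classical and can be cited.
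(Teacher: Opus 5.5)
Your proposal is correct and follows essentially the same route as the paper. Both proofs use a localized Girsanov change of measure under which $X^i$ becomes $X^i_0$ plus $\sigma$ times a Brownian motion independent of $\cF_0$. They then combine polarity of points in dimension $d\geq2$ with countability of $\crit f$ and absolute continuity of the law of $X^i_0$, which is unchanged on $\cF_0$, and finish with equivalence of measures and removal of the localization. The only difference is cosmetic: you localize with the exit time of $X^i$ from a ball, using the deterministic bounds on $Y^i$ and $\Theta$, whereas the paper uses $\rho_m=m\wedge\inf\{t\geq0:\int_0^t\|b_s\|^2\,ds\geq m\}$, which needs only local integrability of the drift.
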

\begin{proof}
  We remove the drift by a localized change of measure, apply the fact that Brownian motion in dimension at least two avoids prescribed points, and then transfer this property back to the original measure.
  Let $i\in [n]$ be fixed. We omit the index $i$ in the sequel, and just write $(X_t,Y_t,B_t)$ instead of $(X_t^i,Y_t^i,B^i_t)$.
  Set $b_t
\eqdef
\beta\Theta_t+\bar\beta Y_t-X_t$.
The process $b$ is progressively measurable, and bounded on every compact time interval.
For $m\in\bN^*$, define the stopping time
$\rho_m
\eqdef
m\wedge
\inf\left\{
t\geq0:
\int_0^t\|b_s\|^2\,ds\geq m
\right\}$. Set
\[
H_s^{(m)}
\eqdef
-\frac1\sigma b_s\un_{\{s\leq\rho_m\}},
\qquad 0\leq s\leq m.
\]
Note that $
\int_0^m\|H_s^{(m)}\|^2\,ds
=
\frac1{\sigma^2}
\int_0^{\rho_m}\| b_s\|^2\,ds
\leq\frac m{\sigma^2}$.
Therefore,
\[
\bE\left[
\exp\left(
\frac12\int_0^m\|H_s^{(m)}\|^2\,ds
\right)
\right]
\leq
\exp\left(\frac{m}{2\sigma^2}\right)<\infty.
\]
Novikov's criterion, applied on the time interval
$[0,m]$ to the integrand $H^{(m)}$, shows that
\[
L_t^{(m)}
=
\exp\left(
\int_0^t(H_s^{(m)})^\top\,d B_s
-\frac12\int_0^t\|H_s^{(m)}\|^2\,ds
\right),
\qquad 0\leq t\leq m,
\]
is a martingale. In particular,
$\bE[L_m^{(m)}]=L_0^{(m)}=1$.
Define a probability measure $\bQ_m$ on
$(\Omega,\cF_m)$ by $\bQ_m(A) \eqdef \bE(\bs 1_A L_m^{(m)})$.
Since $L_m^{(m)}>0$ almost surely, $\bQ_m$ and $\bP$ are equivalent
on $\cF_m$.
By Girsanov's theorem, the process
$\widetilde{B}_t^{(m)}
\eqdef
B_t-\int_0^t H_s^{(m)}ds$, defined for $t\in [0,m]$,
is a standard $d$-dimensional $(\cF_t)$-Brownian motion under
$\bQ_m$. In particular, it is independent of $\cF_0$ under
$\bQ_m$ \cite[Th. 5.1]{shreve1991brownian}.
Since $d\geq2$, every point of $\bR^d$ is polar for this
process \cite[Cor~2.26]{morters2010brownian}.  Using that $\crit f$ is countable,
\begin{equation}
\forall x\notin\crit f,\ \bQ_m\left(
\exists t\in[0,m]:
x+\sigma\widetilde B_t^{(m)}\in\crit f
\right)
=0.\label{eq:nocrit-Btilde}
\end{equation}
From statement ii) of Th.~\ref{th:sdi}, for every $0\leq t\leq \rho_m$, $X_t=X_0+\sigma\widetilde B_{t}^{(m)}$.
Define $$
p_m \eqdef \bQ_m\left(\exists t\in[0,\rho_m]:X_t\in\crit f\right)\,,
$$
and let us prove that $p_m=0$. By disintegration, and using the fact that $\widetilde B$ is independent of $\cF_0$,
\begin{align*}
p_m 
      &=  \bQ_m\left(\exists t\in[0,m]:X_0+\sigma\widetilde B_{t\wedge \rho_m}^{(m)}\in\crit f\right) \\
    &\leq  \bQ_m\left(\exists t\in[0,m]:X_0+\sigma\widetilde B_{t}^{(m)}\in\crit f\right)\\
      &=  \int \bQ_m\left(\exists t\in[0,m]:x+\sigma\widetilde B_{t}^{(m)}\in\crit f\right)d\bQ_m(X_0\in dx) \\
  &\leq  \bQ_m(X_0\in \crit f)\,,
\end{align*}
where the last inequality uses Eq.~(\ref{eq:nocrit-Btilde}). Moreover, $\bQ_m$ and $\bP$ coincide on $\cF_0$,
because of the martingale property of $L^{(m)}$. Thus, $p_m\leq \bP(X_0\in\crit f)$, which is equal to zero, because
$\crit f$ is at most countable, and the law of $X_0$ is absolutely continuous with
respect to Lebesgue measure on $\bR^d$.
As $\bQ_m$ and $\bP$ are equivalent on $\cF_m$, $\bP\left(\left\{
\exists t\in [0,\rho_m]:X_t\in\crit f
\right\}\right)=0$. Since $\rho_m\uparrow\infty$ almost surely as $m\to\infty$, the proof is concluded by
\[
\bP\left(\left\{
\exists t\geq0:X_t\in\crit f
\right\}\right)
= \lim_{m\to\infty} \bP\left(\left\{
\exists t\in [0,\rho_m]:X_t\in\crit f
\right\}\right)=0\,.
\]
\end{proof}

\subsection{Recovery of the personal best selector}

We start by proving that, at a given stopping time, the function value of $X$ almost surely instantly improves, due to Brownian noise. This is a consequence of the law of the iterated logarithm.

\begin{lemma}[Immediate improvement at stopping times]
\label{lem:infinite-crossing}
Under Assumption~\ref{hyp:standing}, let $\tau$ be an
$(\cF_t)$-stopping time. Then, with probability one, either
$\tau=\infty$, or, for every $i\in[n]$
\[
\forall \varepsilon > 0, \qquad
f(X_\tau^i)
>
\min_{s\in[\tau,\tau+\varepsilon]}f(X_s^i).
\]
\end{lemma}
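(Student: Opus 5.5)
Fix $i\in[n]$; a finite union over $i$ then gives the statement. On $\{\tau<\infty\}$, Prop.~\ref{prop:avoid-critical-points} gives $\nabla f(X_\tau^i)\neq0$ almost surely. The idea is that near $\tau$ the Brownian part of $X^i$ dominates the bounded drift, and that along the direction $-\nabla f(X^i_\tau)$ it moves by order $\sqrt{h\log\log(1/h)}$ infinitely often as $h\downarrow0$. The first-order Taylor term then makes $f$ strictly smaller than $f(X^i_\tau)$ at times arbitrarily close to $\tau$.

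\textbf{Reduction.} We may localize, replacing $\tau$ by $\tau\wedge m$ and using $\{\tau<\infty\}=\bigcup_m\{\tau\le m\}$, so that $\tau$ is bounded. By the strong Markov property of Brownian motion for $(\cF_t)$-stopping times under the usual conditions, $W_h\eqdef B^i_{\tau+h}-B^i_\tau$ is a standard $d$-dimensional Brownian motion independent of $\cF_\tau$. Set $g\eqdef-\nabla f(X^i_\tau)/\|\nabla f(X^i_\tau)\|$ on $\{\nabla f(X^i_\tau)\neq0\}$, and $g$ equal to a fixed unit vector otherwise. Then $g$ is $\cF_\tau$-measurable and $\|g\|=1$, so $w_h\eqdef g^\top W_h$ is a standard one-dimensional Brownian motion; this follows by conditioning on $\cF_\tau$ and using independence. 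The law of the iterated logarithm at $0$ gives, almost surely,
\[
\limsup_{h\downarrow0}\frac{w_h}{\sqrt{2h\log\log(1/h)}}=1 .
\]

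\textbf{Expansion.} By statement ii) of Th.~\ref{th:sdi},
\[
X^i_{\tau+h}-X^i_\tau=\int_\tau^{\tau+h}b_s\,ds+\sigma W_h .
\]
Here $b_s$ is bounded near $\tau$, because $Y^i$ and $\Theta$ lie in a compact set (as in Lem.~\ref{lem:moment-X}) and $X^i$ is continuous. Hence the drift term is $O(h)$, while $\|W_h\|=O(\sqrt{h\log\log(1/h)})$ almost surely. Since $f$ is $C^2$, a Taylor expansion gives
\[
f(X^i_{\tau+h})-f(X^i_\tau)=-\sigma\|\nabla f(X^i_\tau)\|\,w_h+O(h)+O\bigl(h\log\log(1/h)\bigr).
\]
Along a sequence $h_k\downarrow0$ with $w_{h_k}\ge\sqrt{h_k\log\log(1/h_k)}$, the first term is negative and of order $\sqrt{h_k\log\log(1/h_k)}$, which dominates the remainders. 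Therefore $f(X^i_{\tau+h_k})<f(X^i_\tau)$ for all large $k$. Since $h_k$ can be taken below any $\varepsilon>0$, this gives the claim.

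\textbf{Main obstacle.} The delicate step is applying the strong Markov property of $\bs B$ at $\tau$ in the abstract filtration of Assumption~\ref{hyp:standing}. This holds because $\bs B$ is an $(\cF_t)$-Brownian motion and the filtration satisfies the usual conditions. We also need to check that the random direction $g$ does not interfere with the iterated-logarithm argument, which the conditioning on $\cF_\tau$ handles. Finally, the $O(h)$ bound on the drift must hold pathwise near $\tau$, and this follows from local boundedness of $b$.
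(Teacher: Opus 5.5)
Your proof is correct and follows the paper's argument: localization at $\tau\wedge m$, the strong Markov property of $\bs B$, the scalar law of the iterated logarithm along the normalized gradient direction, and a Taylor expansion with an $O(h)$ drift term. The paper packages this as the generic Lemma~\ref{lem:infinite-crossing-generic}. One citation should be fixed: in this abstract setting, the boundedness of $Y^i$ and $\Theta$ comes from statements i), iii) and iv) of Th.~\ref{th:sdi} together with coercivity (and $\Theta$ is only essentially bounded, which suffices for the integral), not from Lem.~\ref{lem:moment-X}, which concerns the discrete iterates.
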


We deduce Lemma~\ref{lem:infinite-crossing} from the following
more general result, which we will also apply to an auxiliary
Brownian process in Section~\ref{ss:global_best_selector}.

\begin{lemma}
\label{lem:infinite-crossing-generic}
Let $\sigma>0$, and $f$ twice continuously differentiable on $\bR^d$. 
Let $(\Omega,\cF,(\cF_t)_{t\geq0},\bP)$ be a filtered probability
space satisfying the usual conditions, and let $B$ be a standard
$d$-dimensional $(\cF_t)$-Brownian motion. Let $X$ be an
$(\cF_t)$-adapted continuous process and let $Z$ be an
$(\cF_t)$-progressively measurable process such that
\[
\operatorname*{ess\,sup}_{0\leq s\leq T}\|Z_s\|<\infty
\qquad\text{a.s., for every }T>0,
\]
and, almost surely, for every $t\geq0$,
\[
X_t
=
X_0+\int_0^t(Z_s-X_s)\,ds+\sigma(B_t-B_0).
\]
Assume moreover that $\bP\bigl(X_t\notin\crit f\text{ for every }t\geq0\bigr)=1$.
Let $\tau$ be an $(\cF_t)$-stopping time. Then, with probability one,
either $\tau=\infty$, or
\[
\forall\varepsilon>0,\qquad
f(X_\tau)
>
\min_{s\in[\tau,\tau+\varepsilon]}f(X_s).
\]
\end{lemma}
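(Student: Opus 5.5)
The plan is to reduce to a one-dimensional statement about a Brownian motion started at time $\tau$, and then use the law of the iterated logarithm.

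\textbf{Step 1: restart at $\tau$.} Work on $\{\tau<\infty\}$. By the strong Markov property, or simply because $W_u\eqdef B_{\tau+u}-B_\tau$ is an $(\cF_{\tau+u})$-Brownian motion independent of $\cF_\tau$, we can write
\[
X_{\tau+u}=X_\tau+\int_\tau^{\tau+u}(Z_s-X_s)\,ds+\sigma W_u .
\]
By the local ess sup bound on $Z$ and the continuity of $X$, the drift term is $O(u)$ as $u\downarrow0$, almost surely on $\{\tau<\infty\}$.

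\textbf{Step 2: Taylor expansion.} Set $g\eqdef\nabla f(X_\tau)$, which is $\cF_\tau$-measurable. The hypothesis $X_t\notin\crit f$ for all $t$ gives $g\neq0$ almost surely. Since $f$ is $C^2$,
\[
f(X_{\tau+u})-f(X_\tau)=\langle g,X_{\tau+u}-X_\tau\rangle+O(\|X_{\tau+u}-X_\tau\|^2).
\]
By Lévy's modulus of continuity (or the LIL applied to each coordinate of $W$), $\|X_{\tau+u}-X_\tau\|^2=O(u\log\log(1/u))$. The linear term equals $\sigma\langle g,W_u\rangle+O(u)$.

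\textbf{Step 3: LIL for the projection.} Define the scalar process $\beta_u\eqdef\langle g,W_u\rangle/\|g\|$. Because $g$ is $\cF_\tau$-measurable and $W$ is independent of $\cF_\tau$, conditionally on $\cF_\tau$ the process $\beta$ is a standard one-dimensional Brownian motion. This is the step needing most care: one can condition on $\cF_\tau$ (regular conditional law), or check via Lévy's characterization that $\beta$ is a continuous $(\cF_{\tau+u})$-martingale with quadratic variation $u$. The LIL at $0$ then gives, almost surely, a sequence $u_k\downarrow0$ with
\[
\beta_{u_k}\leq-\sqrt{u_k\log\log(1/u_k)}.
\]

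\textbf{Step 4: conclude.} Along $u_k$,
\[
f(X_{\tau+u_k})-f(X_\tau)\leq-\sigma\|g\|\sqrt{u_k\log\log(1/u_k)}+O(u_k\log\log(1/u_k))<0
\]
for all large $k$, since $\sqrt{u\log\log(1/u)}$ dominates $u\log\log(1/u)$ as $u\downarrow0$. Hence every interval $[\tau,\tau+\varepsilon]$ contains a time $s$ with $f(X_s)<f(X_\tau)$, which is the claim.

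The main obstacle is rigor in Steps 1 and 3: the random direction $g$ and the random remainder constants. The random constants are handled pathwise, since the LIL gives an almost sure event on which we then argue deterministically.

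Lemma~\ref{lem:infinite-crossing} then follows by applying this lemma to each $X^i$ with $Z=\beta\Theta+\bar\beta Y^i$, which is bounded because $\Theta$ and $Y^i$ lie in a compact set. Prop.~\ref{prop:avoid-critical-points} supplies the non-criticality hypothesis.
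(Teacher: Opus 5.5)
Your proposal is correct and follows essentially the same route as the paper: restart the Brownian motion at $\tau$, apply the law of the iterated logarithm to the projection on the unit gradient direction, and absorb the $O(u)$ drift and the quadratic Taylor remainder pathwise. The paper restarts at the bounded times $\tau\wedge m$ and intersects over $m$, which avoids working directly on $\{\tau<\infty\}$. One small correction: Lévy's modulus gives $\|W_u\|^2=O(u\log(1/u))$, not $O(u\log\log(1/u))$; this is still negligible against $\sqrt{u\log\log(1/u)}$, so either bound suffices.
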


\begin{proof}
For $m\in\bN^*$, let $\rho_m=\tau\wedge m$ and set
\[
U_m\eqdef
\frac{\nabla f(X_{\rho_m})}
     {\|\nabla f(X_{\rho_m})\|},
\]
with an arbitrary fixed unit vector as its value when the
denominator vanishes. By assumption, this exceptional event
has probability zero. $U_m$ is
$\cF_{\rho_m}$-measurable.

By the strong Markov property, $ \widehat B^{\,m} 
\eqdef (B_{\rho_m+h}-B_{\rho_m})_{h\geq0}$
is a standard $d$-dimensional Brownian motion independent of
$\cF_{\rho_m}$. Therefore, conditionally on
$\cF_{\rho_m}$, the process
$U_m^\top\widehat B^{\,m}$ is a standard one-dimensional
Brownian motion.
The scalar law of the iterated logarithm gives, setting $a(h)\eqdef \sqrt{2h\log\log(1/h)}$,
\[
\liminf_{h\downarrow0}
\frac{U_m^\top\widehat B_h^{\,m}}{a(h)}
=-1, \qquad \limsup_{h\downarrow0}
\frac{U_m^\top\widehat B_h^{\,m}}{a(h)}
=1,
\]  
with probability one; see \cite[Cor.~5.3]{morters2010brownian}.
Applying the same law to each coordinate also gives
\[
\|\widehat B_h^{\,m}\|=O(a(h))
\qquad\text{as }h\downarrow0,
\quad\text{almost surely}.
\]
Intersecting these probability-one events over
$m\in\bN^*$, and using $\rho_m=\tau$ on $\{\tau\leq m\}$,
we obtain, almost surely on $\{\tau<\infty\}$,
\[
\liminf_{h\downarrow0}
\frac{U^\top(B_{\tau+h}-B_\tau)}{a(h)}
=-1,
\qquad
\|B_{\tau+h}-B_\tau\|=O(a(h)),
\]
where $g\eqdef\nabla f(X_\tau)$ and $U\eqdef g/\|g\|$. 
Here, $g\neq0$ by the critical-point avoidance assumption.

We now work pathwise on this probability-one event and on
$\{\tau<\infty\}$.
Write
\[
X_{\tau+h}-X_\tau
=\sigma(B_{\tau+h}-B_\tau)+R_h,
\qquad
R_h\eqdef\int_\tau^{\tau+h}(Z_s-X_s)\,ds.
\]
The local essential boundedness of $Z$ and continuity of $X$
imply that $\|R_h\|=O(h)$. 
Since $h=o(a(h))$, it follows that $$\|X_{\tau+h}-X_\tau\|=O(a(h)).$$ 
As $f$ is $C^2$, Taylor's formula yields
\begin{align*}
f(X_{\tau+h})-f(X_\tau)
&=g^\top(X_{\tau+h}-X_\tau)
  +O(\|X_{\tau+h}-X_\tau\|^2)\\
&=\sigma g^\top(B_{\tau+h}-B_\tau)
  +O(h)+O(a(h)^2).
\end{align*}
All the $O(\cdot)$ estimates here are pathwise.
Dividing by $a(h)$, using
$h/a(h)\to0$ and $a(h)\to0$, we obtain
\[
\liminf_{h\downarrow0}
\frac{f(X_{\tau+h})-f(X_\tau)}{a(h)}
=-\sigma\|g\|<0.
\]
Hence for every $\varepsilon >0$, there exists $h \in [0, \varepsilon]$ such that $f(X_{\tau+h}) < f(X_\tau)$, which proves the claim.
\end{proof}

\begin{proof}[Proof of Lemma~\ref{lem:infinite-crossing}]
  Fix $i\in[n]$ and set $
Z^i_t\eqdef\beta\Theta_t+\bar\beta Y_t^i$.
Statement iv) of Th.~\ref{th:sdi}, together with coercivity of $f$ and boundedness of the support of $\mu_0$, implies that $Y^i$ takes values in a deterministic compact set. Statement iii) then implies that $\Theta$, and hence $Z^i$, is almost surely essentially bounded. Prop.~\ref{prop:avoid-critical-points} shows that
$X_t^i\notin\crit f$ for every $t\geq0$, almost surely.
Thus, Lemma~\ref{lem:infinite-crossing-generic} applies to
$X^i$. Applying it for every $i\in[n]$ and intersecting the
results allows to conclude.
\end{proof}

We are now ready to prove the recovery of the personal best selector, defined in Eq.~\eqref{eq:pbest}.

\begin{prop}
\label{prop:pbest}
Under Assumption~\ref{hyp:standing}, with probability one,
\[
Y_t^i=\pbest(X^i)_t,
\qquad
\forall i\in[n],\ \forall t\geq0.
\]
\end{prop}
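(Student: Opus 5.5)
Fix $i\in[n]$ and drop the superscript. From statement iv) of Th.~\ref{th:sdi}, together with $Y_0=X_0$, the proof of Prop.~\ref{prop:limit-personal-best} already gives, almost surely for every $t$,
\[
f(Y_t)=\min_{0\le r\le t}f(X_r),\qquad Y_t\in X([0,t]),\qquad Y_t\in\{Y_s\}\cup X((s,t])\ \ (0\le s<t).
\]
So $Y_t=X_u$ for some record time $u\le t$, i.e.\ $u\in\argmin_{[0,t]}f\circ X$. The only ambiguity is that $u$ might not be the \emph{latest} minimizer $\tau_t(X)$. We therefore aim to show that, almost surely, for every $t$ the set $\argmin_{[0,t]}f\circ X$ contains only points $u$ with $X_u=X_{\tau_t(X)}$. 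The cleanest route is to prove that for every $t$ the argmin is a singleton, or at least that two distinct minimizing times carrying distinct positions cannot occur.

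The key step is to rule out the event $E$ that there exist $u<v\le t$ with $f(X_u)=f(X_v)=\min_{[0,t]}f\circ X$. On this event the running minimum $m_r\eqdef\min_{[0,r]}f\circ X$ is flat on $[u,v]$, so no strict improvement occurs right after $u$. I would countabilize using rational $q$ and the stopping times
\[
\sigma_q\eqdef\inf\{r\ge q:\ f(X_r)=m_q\},
\]
the first time after $q$ that the past record level $m_q$ is attained again; this is a stopping time because $m_q$ is $\cF_q$-measurable and $X$ is continuous. Lemma~\ref{lem:infinite-crossing} applied at $\sigma_q$ gives, almost surely on $\{\sigma_q<\infty\}$, that $f(X)$ drops strictly below $f(X_{\sigma_q})=m_q$ immediately after $\sigma_q$. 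Now suppose $u<v$ are both minimizers on $[0,t]$. Choose a rational $q\in(u,v)$. Then $m_q=f(X_u)$, and $f(X_r)\ge m_q$ on $[q,t]$ with $f(X_v)=m_q$, so $\sigma_q\le v\le t$. Moreover $f(X)\ge m_q$ on $[\sigma_q,t]$, so if $\sigma_q<t$ this contradicts the immediate-improvement property. If instead $\sigma_q=v=t$, there is no contradiction, but this configuration is exactly the latest record $\tau_t=t$, which is harmless provided the memory picks it. Intersecting over rational $q$ gives, almost surely for all $t$, that every minimizer other than possibly $t$ itself is unique. Hence $\argmin_{[0,t]}f\circ X$ is either $\{u\}$, or $\{u,t\}$ with $u<t$.

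It remains to identify $Y_t$ in the two-point case $\{u,t\}$. This is where I expect the main difficulty, since the relaxed condition permits $Y_t=X_u$ and in general $X_u\ne X_t$. The plan is to exploit right-continuity of $Y$ together with the consistency condition. If $f(X_t)=m_t$, then $t$ is a record time and, by the argument above applied on $[0,t']$ for $t'>t$, every time in $[0,t')$ is a strict record, so for $t'$ slightly larger than $t$ the argmin on $[0,t']$ is a single point $\tau_{t'}$. Immediate improvement at $t$ forces this single point to lie in $(t,t']$. Since then $Y_{t'}=X_{\tau_{t'}}$ and $\tau_{t'}\downarrow t$ as $t'\downarrow t$, continuity of $X$ and right-continuity of $Y$ give $Y_t=\lim Y_{t'}=X_t=\pbest(X)_t$. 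In the singleton case, $Y_t=X_u=X_{\tau_t}$ directly. Care is needed because Lemma~\ref{lem:infinite-crossing} applies only at stopping times, not at arbitrary $t$. The uncountable family of times has to be handled through the rational-indexed $\sigma_q$ above, and I would double-check that the immediate-improvement fact at an arbitrary record time $t$ follows from the $\sigma_q$ argument: choosing $q<t$ close to $t$ gives $\sigma_q$ equal to the first return to level $m_q$.
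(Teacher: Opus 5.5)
Your argument is correct and takes a genuinely different route from the paper's.

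\textbf{The paper's route.} The paper never analyses the shape of $\argmin_{[0,t]} f\circ X$. It works with the $\delta$-separated record stopping times $t_j^\delta$ (the first record time after $t_{j-1}^\delta+\delta$). Immediate improvement and right-continuity give $Y=X$ at those times. It then propagates $Y_t=X_t$ to every record time $t$ by applying the consistency condition iv) between the last selected record time $\tau^\delta<t$ and $t$, and letting $\delta\to0$. Finally it applies iv) once more between $\tau_t(X)$ and $t$.

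\textbf{Your route.} You use the first-return times $\sigma_q$ to prove a structural fact: almost surely, for all $t$, the argmin is contained in $\{u,t\}$. After that you only need the case $s=0$ of iv), i.e.\ that $Y_t$ is some record position, together with right-continuity of $Y$.

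\textbf{What each buys.} Your route shows that, under the nondegeneracy assumptions, the consistency part of iv) for $s>0$ is redundant for this proposition. It also gives a sharper picture: ties in the running minimum occur only at first returns to a previous record level, and there right-continuity selects the latest record. The paper's route avoids the tie analysis but relies on consistency.

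\textbf{Points to tighten.}

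First, immediate improvement at $t$ in the two-point case $\{u,t\}$. It holds because $f(X_r)>m_t$ on $(u,t)$ forces $\sigma_q=t$ for every rational $q\in(u,t)$, so $t$ is one of your countably many stopping times. Your phrase ``at an arbitrary record time'' overstates this. At a record time $u$ after which $f\circ X$ stays strictly above its running minimum for a while, there is no immediate improvement. So the justification must go through the two-point structure, exactly as above.

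Second, the claim that $\argmin_{[0,t']}f\circ X$ is a singleton for all $t'$ slightly larger than $t$ is unjustified; two-point times may accumulate at $t$ from the right. It is also unnecessary. Immediate improvement at $t$ gives $m_{t'}<m_t$ for every $t'>t$, so every minimizer on $[0,t']$ lies in $(t,t']$. Hence $Y_{t'}=X_{r'}$ for some $r'\in(t,t']$, whichever minimizer is chosen, and right-continuity gives $Y_t=\lim_{t'\downarrow t}Y_{t'}=X_t$.

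Third, under Assumption~\ref{hyp:standing}, derive $f(Y_t)=m_t$ and $Y_t\in X([0,t])$ directly from statements i) and iv) of Th.~\ref{th:sdi} with $s=0$. Do not cite the proof of Prop.~\ref{prop:limit-personal-best}, which concerns the specific limit constructed from the discrete scheme. The inclusion $Y_t\in\{Y_s\}\cup X((s,t])$ you list is not available in that form under the standing assumption, but you never use it.
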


\begin{proof}
Fix $i\in[n]$ and omit the superscript $i$.
For every $\delta\in1/\bN^*$, set $t_0^\delta=0$ and define
recursively, for $j\geq1$,
\[
t_j^\delta
\eqdef
\inf\left\{
t\geq t_{j-1}^\delta+\delta:
f(X_t)=\min_{0\leq s\leq t}f(X_s)
\right\},
\]
with the convention $\inf\varnothing=\infty$. The process
$
f(X_t)-\min_{0\leq s\leq t}f(X_s)
$
is adapted and has continuous paths. Consequently, each
$t_j^\delta$ is a stopping time.
Since the family
$\{t_j^\delta:\delta\in1/\bN^*,\,j\geq0\}$ is countable, we may work
on a probability-one event on which statement iv) of
Th.~\ref{th:sdi} holds and, for every
$\delta\in1/\bN^*$ and every $j\geq0$, either
$t_j^\delta=\infty$, or
\[
\forall\varepsilon>0,\qquad
f(X_{t_j^\delta})
>
\min_{s\in[t_j^\delta,t_j^\delta+\varepsilon]}f(X_s),
\]
by Lemma~\ref{lem:infinite-crossing}. Let $t_j^\delta<\infty$.
Since $t_j^\delta$ is a record time and $Y_0=X_0$, statement iv) gives
$f(Y_{t_j^\delta})=f(X_{t_j^\delta})$. For every
$\varepsilon>0$, Lemma~\ref{lem:infinite-crossing} provides
$s_\varepsilon\in(t_j^\delta,t_j^\delta+\varepsilon]$ such that
$f(X_{s_\varepsilon})<f(Y_{t_j^\delta})$. Applying statement iv)
between $t_j^\delta$ and $s_\varepsilon$, we obtain
$Y_{s_\varepsilon}=X_{u_\varepsilon}$ for some
$u_\varepsilon\in[t_j^\delta,s_\varepsilon]$. Letting
$\varepsilon\downarrow0$, the right continuity of $Y$ and the
continuity of $X$ yield
$Y_{t_j^\delta}=X_{t_j^\delta}$.

We now show that $Y_t=X_t$ at every record time $t$. This is already
proved if $t=t_j^\delta$ for some $\delta\in1/\bN^*$ and $j\geq0$.
Otherwise, for each $\delta\in1/\bN^*$, let
$\tau^\delta$ be the largest selected record time $t_j^\delta<t$.
Such a time exists since $t_0^\delta=0$, and the definition of the
sequence $(t_j^\delta)_j$ implies
$t-\tau^\delta<\delta$. By statement iv),
$Y_t\in\{Y_{\tau^\delta}\}\cup X([\tau^\delta,t])$. Since
$Y_{\tau^\delta}=X_{\tau^\delta}$, there exists
$u^\delta\in[\tau^\delta,t]$ such that $Y_t=X_{u^\delta}$.
As $\delta\downarrow0$, $u^\delta\to t$, and continuity of $X$ gives
$Y_t=X_t$.

Set now $t\geq0$ and
$\tau_t=\sup\arg\min_{0\leq r\leq t}f(X_r)$. The time $\tau_t$ is a
record time, hence $Y_{\tau_t}=X_{\tau_t}$. If $\tau_t=t$, then
$Y_t=X_{\tau_t}$. If $\tau_t<t$, statement iv), applied between
$\tau_t$ and $t$, shows that $Y_t$ minimizes $f$ over
$\{X_{\tau_t}\}\cup X([\tau_t,t])$. By definition of $\tau_t$, no
time in $(\tau_t,t]$ attains the value $f(X_{\tau_t})$; therefore
$Y_t=X_{\tau_t}=\pbest(X)_t$.
\end{proof}

\subsection{Recovery of the global best selector}\label{ss:global_best_selector}

Having identified the personal-best processes, it remains to prove that the set of global best particles is a singleton, for almost every $t$ and almost surely. This is achieved by an argument relying on Brownian noise and Girsanov's theorem.

\begin{prop}
  \label{prop:different-f}
  Let $i,j \in [n]$ such that $i\neq j$. Under Assumption~\ref{hyp:standing}, the following holds with probability one:
  $f(Y^i_t)\neq f(Y^j_t)$ for Lebesgue-almost every $t>0$.
\end{prop}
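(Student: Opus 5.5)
Fix $i\neq j$ and a horizon $T>0$. We write $g_t\eqdef f(Y^i_t)-f(Y^j_t)$. By Prop.~\ref{prop:pbest} and statement iv) of Th.~\ref{th:sdi}, $f(Y^i_t)=\min_{0\leq r\leq t}f(X^i_r)$ and $f(Y^j_t)=\min_{0\leq r\leq t}f(X^j_r)$. Both are nonincreasing continuous functions of $t$, so $g$ is continuous and of bounded variation. By Fubini it suffices to show $\bP(g_t=0)=0$ for Lebesgue-a.e.\ $t\in(0,T]$. On the event $\{g_t=0\}$, since the running minima are attained, there exist record times $u\leq t$ of $X^i$ and $v\leq t$ of $X^j$ with $f(Y^i_t)=f(X^i_u)=f(X^j_v)=f(Y^j_t)$. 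We then split into the subevents where the last record time of $X^i$ before $t$ is $\tau_t(X^i)$, and similarly for $X^j$.

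First I remove the interaction by the localized Girsanov change of measure of Prop.~\ref{prop:avoid-critical-points}, now applied jointly to the $dn$-dimensional Brownian motion with drift $-\sigma^{-1}(b^1,\dots,b^n)$ stopped at $\rho_m$. Under $\bQ_m$, on $[0,\rho_m]$, the pair $(X^i,X^j)$ equals $(X^i_0+\sigma\widetilde B^i,\,X^j_0+\sigma\widetilde B^j)$, where $\widetilde B^i,\widetilde B^j$ are independent Brownian motions, independent of $\cF_0$. Equivalence of $\bQ_m$ and $\bP$ on $\cF_m$, together with $\rho_m\uparrow\infty$, reduces the problem to the following statement. For two independent Brownian motions $W^i,W^j$ started from independent-enough initial points (the law of $(X^i_0,X^j_0)$ is absolutely continuous), the set $\{t: m^i_t=m^j_t\}$ has Lebesgue measure zero a.s., where $m^i_t=\min_{r\leq t}f(X^i_0+\sigma W^i_r)$. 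Ties only matter up to the event $\{t\leq\rho_m\}$, so it is enough to handle the driftless case on $[0,T]$.

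In the driftless case I condition on $W^j$ and $X^j_0$, which freezes the continuous nonincreasing function $m^j$. It is then enough to show that, for a fixed continuous function $c$, $\mathrm{Leb}\{t\leq T: m^i_t=c_t\}=0$ a.s. Split $[0,T]$ into times when $m^i$ is strictly decreasing at $t$, i.e.\ when $X^i$ is at a record, $t=\tau_t$, and times inside a flat interval of $m^i$. I will handle the two parts as follows.

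The first part is the record set $\{t: f(X^i_t)=m^i_t\}$. For fixed $t$ it has probability zero by Lemma~\ref{lem:infinite-crossing-generic} applied backward, or more simply because $f(X^i_t)$ has a density: by time reversal, the probability that $f(X^i_t)\leq\min_{r<t}f(X^i_r)$ is zero by the law of the iterated logarithm argument at the deterministic time $t$. Fubini then gives measure zero.

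The second part consists of the countably many flat intervals of $m^i$. On each such interval $m^i$ is constant, equal to some record value $f(X^i_u)$. Since $c$ is monotone, $c$ equals that constant on a set of positive measure only if $c$ has a flat piece at that exact level. A continuous nonincreasing $c$ has at most countably many such levels $\ell_1,\ell_2,\dots$. It therefore suffices that, for each fixed $\ell$, $\bP(\exists$ a record time $u$ with $f(X^i_u)=\ell$ and $m^i$ flat after $u)=0$. For this, let $u$ be the hitting time of the level set $\{f\leq\ell\}$. This is a stopping time, and $X^i_u\notin\crit f$ by Prop.~\ref{prop:avoid-critical-points}. Lemma~\ref{lem:infinite-crossing-generic} at this stopping time shows that $f$ immediately goes strictly below $\ell$, so $m^i$ cannot stay at level $\ell$ on a nondegenerate interval.

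The main obstacle is the conditioning step: Lemma~\ref{lem:infinite-crossing-generic} is stated for one filtration, and I need it for $X^i$ while $c=m^j$ is treated as frozen. Independence of $W^i$ from $(W^j,X^j_0)$ under $\bQ_m$ lets me enlarge $\cF_0$ by $\sigma(W^j,X^j_0)$ while keeping $W^i$ a Brownian motion, and then the levels $\ell_k$ become $\cF_0$-measurable. I will also have to handle the stopping at $\rho_m$ carefully, since after $\rho_m$ the driftless representation fails. Restricting all events to $[0,\rho_m]$ before transferring back to $\bP$ handles this.
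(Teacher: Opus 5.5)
Your argument is correct in outline but takes a different route from the paper. Both proofs use the same localized Girsanov step.

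\textbf{The paper's route.} It works at a fixed time $t$. For $x\notin\crit f$, it shows that $\min_{0\leq s\leq t}f(x+\sigma\widetilde B_s)$ has an atomless law. Lemma~\ref{lem:infinite-crossing-generic} at the hitting time $T_c$ of $\{f\leq c\}$, for a deterministic level $c$, disposes of $\{T_c<t\}$. The event $\{T_c=t\}\subset\{f(x+\sigma\widetilde B_t)=c\}$ is null because $x+\sigma\widetilde B_t$ has a density and $f^{-1}(\{c\})$ is Lebesgue-null. Independence of the two driving Brownian motions, disintegration over $(X^i_0,X^j_0)$ and Tonelli then conclude.

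\textbf{Your route.} You argue pathwise in $t$. You split the tie set into the record set $R^i\eqdef\{t:f(X^i_t)=m^i_t\}$ (in the driftless picture) and the flat stretches of $m^i$. You then apply the same crossing lemma at hitting times of the countably many flat levels of $m^j$. This gives finer information: level sets of $m^i$ at levels independent of $W^i$ are at most singletons.

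It costs two points that the paper avoids and that you only sketch.
\begin{enumerate}
\item The flat levels must be enumerated measurably, e.g.\ via $m^j_q$ on the events $\{m^j_q=m^j_{q'}\}$ for rationals $q<q'$. They are then $\mathcal G_0$-measurable in the augmented filtration generated by $\cF_0$, $W^j$ and $(W^i_s)_{s\leq t}$. $W^i$ is still a Brownian motion there, since it is independent of $\cF_0\vee\sigma(W^j)$ under $\bQ_m$.
\item The fixed-time claim that $f(X^i_t)=m^i_t$ has probability zero does not follow from $f(X^i_t)$ having a density. The time reversal needs care, because the direction $\nabla f(X^i_t)$ is not independent of the reversed increments $\hat W_h\eqdef W^i_t-W^i_{t-h}$. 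It does go through with the multidimensional law of the iterated logarithm: almost surely, the cluster set of $\hat W_h/\sqrt{2h\log\log(1/h)}$ as $h\downarrow0$ is the whole closed unit ball, so all directions are handled simultaneously.
\end{enumerate}

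You can also bypass the time reversal entirely. Define $R^j$ likewise. A tie at $t\in R^i\cap R^j$ forces $f(X^i_0+\sigma W^i_t)=f(X^j_0+\sigma W^j_t)$. By Fubini this happens only on a Lebesgue-null set of times: the pair has a density for $t>0$, and $\{(x,y):f(x)=f(y)\}$ is null, as in the proof of Prop.~\ref{prop:unique-best-discrete}. Ties outside $R^i$ are handled by your flat-level argument, and ties outside $R^j$ by the same argument with $i$ and $j$ swapped. With this, your opening reduction to fixed $t$ and the split by last record times are unnecessary.
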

This proposition immediately leads to the following corollary. The proof of the corollary is omitted.
\begin{coro}  \label{cor:gbest}
  Under Assumption~\ref{hyp:standing}, with probability one, $F(\bs Y_t)$ is a singleton for Lebesgue-almost every $t\geq 0$.
  In particular, $F(\bs Y_t) = \{\Theta_t\}$ $\bP\otimes dt$-a.e.
\end{coro}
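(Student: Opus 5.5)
The plan is to derive the corollary directly from Prop.~\ref{prop:different-f} by a countable union argument, and then to combine it with statement iii) of Th.~\ref{th:sdi}. There are three steps: a measurability check, the union bound over pairs, and the collapse of the convex hull.

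\emph{Step 1: measurability.} For each pair $i\neq j$ in $[n]$, consider the set
\[
N_{ij}\eqdef\{(\omega,t):f(Y^i_t(\omega))=f(Y^j_t(\omega))\}.
\]
Each $Y^i$ is càdlàg and adapted, hence progressively measurable, and $f$ is continuous. Therefore $N_{ij}$ belongs to $\cF\otimes\cB(\bR_+)$, and Fubini's theorem can be used freely.

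\emph{Step 2: the union bound.} Prop.~\ref{prop:different-f} states that, for $\bP$-almost every $\omega$, the section $\{t:(\omega,t)\in N_{ij}\}$ has Lebesgue measure zero. Since there are finitely many pairs, the set $N\eqdef\bigcup_{i\neq j}N_{ij}$ has the same property: almost surely, its sections are Lebesgue-null. Equivalently, $(\bP\otimes dt)(N)=0$.

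Now fix $(\omega,t)\notin N$. Suppose $\arg\min_{k\in[n]}f(Y^k_t)$ contained two distinct indices $i\neq j$. Then $f(Y^i_t)=f(Y^j_t)$, which contradicts $(\omega,t)\notin N$. So the argmin is a single index $k^*$, and
\[
F(\bs Y_t)=\{Y^{k^*}_t\}
\]
is a singleton. This gives the first assertion.

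\emph{Step 3: the convex hull collapses.} Statement iii) of Th.~\ref{th:sdi} gives $\Theta_t\in\conv F(\bs Y_t)$ outside a $\bP\otimes dt$-null set. The convex hull of a singleton is that singleton. Hence, outside the union of this null set with $N$, we have
\[
\Theta_t=Y^{k^*}_t,\qquad\text{that is,}\qquad F(\bs Y_t)=\{\Theta_t\}.
\]
In addition, $k^*=J(\bs Y_t)$ regardless of the tie-breaking rule, so $\Theta_t=\gbest(\bs Y_t)$ $\bP\otimes dt$-a.e. This is the form needed to conclude Th.~\ref{th:continuous-time-limit}.

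\emph{Expected obstacle.} The only point that needs care is the passage between ``almost surely, for Lebesgue-a.e.\ $t$'' and ``$\bP\otimes dt$-a.e.'', which is why joint measurability must be checked in Step~1 before applying Fubini. All the genuine difficulty has already been absorbed into Prop.~\ref{prop:different-f}.
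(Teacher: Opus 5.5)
Your proposal is correct and is the argument the paper has in mind: the paper omits the proof as immediate, and the intended derivation is exactly yours, namely a finite union over pairs in Prop.~\ref{prop:different-f} followed by the collapse of $\conv F(\bs Y_t)$ to a singleton using statement iii) of Th.~\ref{th:sdi}. Your joint-measurability check is sound, though the proof of Prop.~\ref{prop:different-f} already yields the $\bP\otimes dt$-null form through its final Tonelli step.
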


\begin{proof}[Proof of Prop.~\ref{prop:different-f}]
Set $i,j\in[n]$ with $i\neq j$, and set
$m_t^k\eqdef\min_{0\leq s\leq t}f(X_s^k)$, for all $k\in[n]$.
By Prop.~\ref{prop:pbest}, $m_t^k=f(Y_t^k)$ for every $t\geq0$,
almost surely.
We first introduce the following localized change of measure, similarly to the one of Prop.~\ref{prop:avoid-critical-points}. Write
\[
\bs b_t
\eqdef
\bigl(\beta\Theta_t+\bar\beta Y_t^k-X_t^k\bigr)_{k\in[n]}
\]
and, for $m\in\bN^*$, define
$\rho_m
\eqdef
m\wedge
\inf\left\{
t\geq0:
\int_0^t\|\bs b_s\|^2\,ds\geq m
\right\}$ and $\bs H_t^{(m)}
\eqdef
-\frac1\sigma\bs b_t\un_{\{t\leq\rho_m\}}$.
As in the proof of Prop.~\ref{prop:avoid-critical-points}, the
following stochastic exponential, defined for all $t\geq 0$,
\[
L_t^{(m)}
\eqdef
\exp\left(
\int_0^t (\bs H_s^{(m)})^\top d\bs B_s
-\frac12\int_0^t\|\bs H_s^{(m)}\|^2\,ds
\right),
\]
is a martingale. Note that $L_t^{(m)} = L_m^{(m)}$ for $t\geq m$.
Define $\bQ_m$ on $(\Omega,\cF)$ by
$\bQ_m(A)
\eqdef
\bE\bigl[\un_A L_m^{(m)}\bigr]$.
Applying
Girsanov's theorem on every deterministic interval $[0,T]$ shows that
\[
\widetilde{\bs B}_t^{(m)}
\eqdef
\bs B_t-\int_0^t\bs H_s^{(m)}\,ds
\]
is a standard $dn$-dimensional $(\cF_t)$-Brownian motion under $\bQ_m$, independent of $\cF_0$.
For each $k\in [n]$, and each $t\in [0,m]$, 
\[
X_{t\wedge \rho_m}^k
=
X_0^k+\sigma\widetilde B_{t\wedge\rho_m}^{k,(m)}.
\]
Set $t>0$ and $m>t$. Let $c\in\bR$, and choose
$x\in\bR^d\setminus\crit f$. Consider the event
\[
A_c^{k,x}
\eqdef
\left\{
\min_{0\leq s\leq t}
f\bigl(x+\sigma\widetilde B_s^{k,(m)}\bigr)=c
\right\}.
\]
This event is empty if $c>f(x)$, so assume that $c\leq f(x)$. Define
\[
T_c
\eqdef
\inf\left\{
s\in[0,t]:
f\bigl(x+\sigma\widetilde B_s^{k,(m)}\bigr)\leq c
\right\},
\]
with the convention $\inf\varnothing=\infty$. Note that $A_c^{k,x}\subset \{T_c\in[0,t]\}$. Also, by continuity,
$f\bigl(x+\sigma\widetilde B_{T_c}^{k,(m)}\bigr)=c$.
Under $\bQ_m$, the process
\[
V_s^x\eqdef x+\sigma\widetilde B_s^{k,(m)},
\]
is a Brownian motion with covariance matrix $\sigma^2 I_d$. Since
$d\geq2$ and $\crit f$ is countable, $V^x$ almost surely never
visits $\crit f$. Moreover, it satisfies
\[
V_s^x
=
V_0^x+\int_0^s(V_r^x-V_r^x)\,dr
+\sigma\bigl(\widetilde B_s^{k,(m)}
             -\widetilde B_0^{k,(m)}\bigr).
\]
Thus, Lemma~\ref{lem:infinite-crossing-generic}, applied under
$\bQ_m$ to $V^x$ at the stopping time $T_c$, implies that, almost
surely,
\[
\forall\varepsilon>0,\ 
f(V_{T_c}^x)
>
\min_{s\in[T_c,T_c+\varepsilon]}f(V_s^x).
\]
On the event $\{T_c <t\}$, choosing $\varepsilon\in(0,t-T_c)$ yields
\[
\min_{0\leq s\leq t}f(V_s^x)
<
f(V_{T_c}^x)
=c.
\]
This is incompatible with $A_c^{k,x}$. We have shown that $A_c^{k,x} \cap \{T_c<t\}$ is a $\bQ_m$-negligible set.
Consequently,
\[
\bQ_m(A_c^{k,x})\leq \bQ_m(\{T_c=t\}) \leq \bQ_m\left( f(x+\sigma\widetilde B_t^{k,(m)}) = c\right).
\]
As $t>0$, the law of $x+\sigma\widetilde B_t^{k,(m)}$ is absolutely continuous w.r.t. the Lebesgue measure on $\bR^d$.
Moreover, the set $f^{-1}(\{c\})$ has zero Lebesgue measure, by the same argument as in the proof of Proposition~\ref{prop:unique-best-discrete}. 
We conclude that $\bQ_m(f(x+\sigma\widetilde B_t^{k,(m)})= c)=0$. Therefore, $\bQ_m(A_c^{k,x})=0$, for every $c\in \bR$, and every $x\notin \crit f$.
Define
\begin{align*}
  p_m &\eqdef \bQ_m\left(\{f(Y_t^i) = f(Y_t^j)\}\cap\{\rho_m\geq t\}\right) \\
      &= \bQ_m\left(
      \left\{\min_{0\leq s \leq t}  f(X_0^i+\sigma\widetilde B_s^{i,(m)})
      = \min_{0\leq s \leq t}  f(X_0^j+\sigma\widetilde B_s^{j,(m)})\right\}
      \cap\{\rho_m\geq t\}\right) \\
    &\leq \bQ_m\left(\min_{0\leq s \leq t}  f(X_0^i+\sigma\widetilde B_s^{i,(m)}) = \min_{0\leq s \leq t}  f(X_0^j+\sigma\widetilde B_s^{j,(m)})\right) \\
    &= \int \bQ_m\left(\min_{0\leq s \leq t}  f(x+\sigma\widetilde B_s^{i,(m)}) = \min_{0\leq s \leq t}  f(y+\sigma\widetilde B_s^{j,(m)})\right)\bP((X_0^i,X_0^j)\in dx\times dy)\,, 
\end{align*}
where we used disintegration, along with the fact that $\widetilde{\bs B}^{(m)}$ is independent of $\cF_0$, and that $\bQ_m$ coincides with $\bP$ on $\cF_0$.
As $\bP((X_0^i,X_0^j)\in \,\cdot\,)$ is absolutely continuous w.r.t. the Lebesgue measure on $\bR^{2d}$, and since $\crit f$ is countable,
the integral can be restricted to the set $(\bR^d\setminus\crit f) \times(\bR^d\setminus\crit f)$. But on that set, the integrand is zero.
Indeed, for a fixed pair $(x,y)$ on non critical points, $\widetilde B^{i,(m)}$ is independent of $\widetilde B^{j,(m)}$. Thus,
the integrand writes itself as the integral of $c\mapsto \bQ_m(A_c^{i,x})$ w.r.t. the law of $ \min_{0\leq s \leq t}  f(y+\sigma\widetilde B_s^{j,(m)})$.
Recalling $\bQ_m(A_c^{i,x})=0$, we conclude that $p_m=0$.
By the equivalence of $\bQ_m$ and $\bP$, it holds that, for every $m$, 
$\bP(f(Y_t^i) = f(Y_t^j), \rho_m\geq t)=0$.
Using that $\rho_m\to\infty$ as $m\to\infty$, it follows that $\bP(f(Y_t^i) = f(Y_t^j))=0$.
Finally, Tonelli's theorem gives
\[
\bE\left[
\int_0^\infty
\un_{\{f(Y_t^i)=f(Y_t^j)\}}\,dt
\right]
=
\int_0^\infty
\bP\bigl(f(Y_t^i)=f(Y_t^j)\bigr)\,dt
=0.
\]
Hence, with probability one,
$f(Y_t^i)\neq f(Y_t^j)$ for Lebesgue-almost every $t>0$.
\end{proof}

\subsection{End of proof of Th.~\ref{th:continuous-time-limit}}

Combine Th.~\ref{th:sdi}, Prop.~\ref{prop:pbest} and Cor.~\ref{cor:gbest}.

\section{Technical lemmas and omitted proofs}
\label{s:technical}

\begin{proof}[Proof of Proposition~\ref{prop:unique-best-discrete}]
    Let $k\geq 0$. Since $\sigma>0$ and the initial distribution $\mu_0$ is absolutely continuous w.r.t. the Lebesgue measure,
  a straightforward induction shows
that the joint distribution of
$(x_l^i:i\in[n],\ 0\leq l\leq k)$
has a density with respect to Lebesgue measure on
$\bR^{dn(k+1)}$. In particular, for arbitrary $i\neq j$ and
$l,m\leq k$, the random vector $(x_l^i,x_m^j)$ has a density with
respect to Lebesgue measure on~$\bR^{2d}$.

  Set $i\neq j$.  Note that the event $f(y^i_k) = f(y^j_k)$ is included in the union of events $\{f(x_l^i) = f(x_m^j)\}$
  over all indices $l,m\leq k$. It is thus sufficient to prove that $\bP(f(x_l^i) = f(x_m^j)) = 0$ for any fixed $(l,m)$.
  The proof is concluded as long as one shows that the set $g^{-1}(\{0\})$ is Lebesgue negligible, where  $g(x,y)\eqdef f(x)-f(y)$.
  Since the critical points of $f$ are isolated, $\crit g$ is at most countable. It is thus sufficient to prove that
  $g^{-1}(\{0\}) \setminus \crit g$ is Lebesgue negligible.
By the implicit function theorem, this forms a $(2d-1)$-dimensional manifold in $\bR^{2d}$, and therefore has zero Lebesgue measure (see
\cite[Th.~9.28]{rudin1976principles}). 
\end{proof}

\begin{proof}[Proof of Lemma~\ref{lem:pbest-well-defined}]
  For every $x\in\cC_d$, $\tau_t(x)$ is well defined for each $t$, and so is $x_{\tau_t(x)}$. We justify that  $x_{\tau_t(x)}$ is càdlàg,
  which makes $\pbest$ well defined. To this end, let $t$ be fixed, and denote $\tau \eqdef \tau_t(x)$ and $\delta\eqdef f(x_t)-f(x_\tau)$. 
  For any $\varepsilon>0$, we have $\tau\leq \tau_{t+\varepsilon}(x)\leq t+\varepsilon$. If $\delta=0$, then $\tau=t$, and thus 
  $\lim_{\varepsilon\downarrow 0} \tau_{t+\varepsilon}(x) = t = \tau$. If $\delta>0$, there exists 
  $\varepsilon>0$, such that $ \max_{s \in [t, t+\varepsilon]} |f(x_s)-f(x_t)| < \delta$. Then $\tau_s(x) = \tau_t(x)$ for any $s\in[t,t+\varepsilon]$, 
  so again, $\lim_{\varepsilon\downarrow 0} \tau_{t+\varepsilon}(x) = \tau$.
  Hence, we have shown that $t\mapsto\tau_t(x)$ is right-continuous, and so is $\pbest(x)$ by continuity of $x$.
  Let $s_k \uparrow t$. Then $\tau_{s_k}(x)$ is non-decreasing and bounded above, so $\tau_{s_k}$ admits a limit. Thus, $\tau_{t-}(x)$
  exists for every $t$. Consequently, $t \mapsto \pbest(x)_t$ admits a left limit at every $t$. 
  
  We have shown that $\pbest$ is well defined.
  It is clearly non anticipative. It remains to establish measurability. For this, it is enough to establish the measurability of the coordinate
  $x\mapsto \pbest(x)_t$ for any fixed $t>0$ (\cite[Th. 16.6]{billingsley1999convergence}). As measurability is a property closed under composition,
  it is sufficient to show $x\mapsto \tau_t(x)$ is measurable, for fixed $t$.

  Consider a fixed $c \in (0,t]$. The inequality $\tau_t(x) < c$ holds iff the
  minimum of $\min_{s\in[0,t]} f(x_s)$ is achieved strictly before
  $c$, and is not achieved on the interval $[c, t]$.  That is, the
  inequality is equivalent to
  \begin{equation*}
      \min_{s \in [c, t]} f(x_s) > \min_{s \in [0, t]} f(x_s) \,.
  \end{equation*}
Denoting $g_1(x) \eqdef \min_{s \in [c, t]} f(x_s)$ and $g_2(x) \eqdef \min_{s \in [0, t]} f(x_s)$, 
we obtain that
\begin{equation*}
      \{ x \in \cC_d : \tau_t(x) < c \} = \{ x \in \cC_d : g_1(x) - g_2(x) > 0 \} \,.
  \end{equation*}
By continuity of $f$, the functions $g_1$ and $g_2$ are continuous. 
  Thus, $\{ x \in \cC_d : \tau_t(x) < c \}$ is open. This proves that $x\mapsto \tau_t(x)$ is measurable. This completes the proof.
\end{proof}

\begin{lemma}\label{lem:iota-continuous}
  The map $\iota$ defined in Eq.~(\ref{eq:def-iota}) is continuous on $\cD_d\to\cC_d$ (and thus on $\cC_d\to\cC_d$).
\end{lemma}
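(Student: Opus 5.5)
To prove Lemma~\ref{lem:iota-continuous}, I will take a sequence $x_n\to x$ in $\cD_d$ and show that $\iota(x_n)\to\iota(x)$ uniformly on $[0,T]$ for every $T>0$. Since the topology of $\cC_d$ is that of locally uniform convergence, this suffices. Continuity on $\cC_d$ then follows, because the inclusion $\cC_d\hookrightarrow\cD_d$ is continuous: uniform convergence on compacts implies $J_1$ convergence, with $\lambda=\mathrm{id}$.

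\textbf{Step 1: uniform bound and pointwise convergence.} Fix $T$ and pick a continuity point $T'>T$ of $x$; such a point exists because the discontinuities of a càdlàg path are countable. By definition of $J_1$ convergence on $[0,T']$, there exist $\lambda_n\in\Lambda_{T'}$ with $\|\lambda_n-\mathrm{id}\|_{\infty,T'}\to0$ and $\|x_n-x\circ\lambda_n\|_{\infty,T'}\to0$. This gives two facts:
\begin{itemize}
\item[(a)] $\sup_n\|x_n\|_{\infty,T'}\le \|x\|_{\infty,T'}+\sup_n\|x_n-x\circ\lambda_n\|_{\infty,T'}<\infty$, since a càdlàg path is bounded on compacts.
\item[(b)] $x_n(s)\to x(s)$ at every continuity point $s\in[0,T']$ of $x$, because $\lambda_n(s)\to s$ and $x$ is continuous at $s$.
\end{itemize}
The discontinuity set of $x$ is countable, hence Lebesgue-null, so (b) holds for almost every $s$.

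\textbf{Step 2: uniform convergence of the integrals.} Dominated convergence, with the constant bound from (a), gives
\[
\sup_{t\le T}\|\iota(x_n)_t-\iota(x)_t\|\le\int_0^{T}\|x_n(s)-x(s)\|\,ds\longrightarrow0 .
\]

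The only delicate point is in Step 1: the metric $d_{J_1,T}$ is stated for continuity points $T$ of the limit, so the bound and the a.e. pointwise convergence must be extracted at such a $T'>T$ rather than at $T$ itself. Once that is done, the argument is routine.
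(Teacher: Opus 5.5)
Your proof is correct and follows essentially the same route as the paper: you take time changes on an interval $[0,T']$ containing $[0,T]$, use that the càdlàg limit is continuous off a countable (hence Lebesgue-null) set, and conclude by dominated convergence. The only cosmetic difference is that the paper splits $\|y^n(s)-y(s)\|$ through the intermediate point $y(\lambda_n^{-1}(s))$ and dominates the non-uniform part by a bound depending only on the limit $y$, whereas you prove a.e.\ pointwise convergence of $x_n$ directly and dominate using your uniform bound (a).
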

\begin{proof}
Let $y^n\to y$ in $\cD_d$ and fix $T>0$. By the definition of the
Skorokhod topology, there exist increasing bijections $\lambda_n$
such that, on a compact interval containing $[0,T]$,
\[
\|\lambda_n-I\|_\infty\longrightarrow0,
\qquad
\|y^n\circ\lambda_n-y\|_\infty\longrightarrow0.
\]
Consequently, $\lambda_n^{-1}\to I$ uniformly and
$\sup_{t\leq T}
\|y^n(t)-y(\lambda_n^{-1}(t))\|
\longrightarrow0.$
Therefore,
\begin{align*}
\sup_{t\leq T}
\|\iota(y^n)_t-\iota(y)_t\|
&\leq
\int_0^T\|y^n(s)-y(s)\|\,ds\\
&\leq
T\sup_{s\leq T}
\|y^n(s)-y(\lambda_n^{-1}(s))\|
+
\int_0^T
\|y(\lambda_n^{-1}(s))-y(s)\|\,ds.
\end{align*}
The first term converges to zero. The second one also converges to
zero by dominated convergence, since $y$ is locally bounded and is
continuous Lebesgue-almost everywhere. Hence
$\iota(y^n)\to\iota(y)$ uniformly on every compact time interval.
\end{proof}

\begin{lemma}
\label{lem:lim-F-leftright}
Let $f$ be continuous and let $\bs y\in (\cD_d)^n$. For every
$t>0$,
\begin{align*}
\limsup_{s\downarrow t}
\bigl(F(\bs y_s)\cup F(\bs y_{s-})\bigr)
&\subset F(\bs y_t),\\
\limsup_{s\uparrow t}
\bigl(F(\bs y_s)\cup F(\bs y_{s-})\bigr)
&\subset F(\bs y_{t-}).
\end{align*}
\end{lemma}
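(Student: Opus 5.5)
We have to prove Lemma~\ref{lem:lim-F-leftright}. The limsup of sets is understood in the Kuratowski sense: a point $z$ belongs to $\limsup_{s\downarrow t}(F(\bs y_s)\cup F(\bs y_{s-}))$ iff there are $s_k\downarrow t$, $s_k>t$, and $z_k\in F(\bs y_{s_k})\cup F(\bs y_{s_k-})$ with $z_k\to z$. The plan is a direct sequential argument. It rests on two facts: for càdlàg $\bs y$, both $\bs y_{s_k}$ and $\bs y_{s_k-}$ converge to $\bs y_t$ as $s_k\downarrow t$, and both converge to $\bs y_{t-}$ as $s_k\uparrow t$ with $s_k<t$. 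Closedness of the graph of $F$ is then used.

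\emph{Step 1: the graph of $F$ is closed.} Let $\bs y^{(k)}\to\bs y$ in $\bR^{dn}$ and $z_k\in F(\bs y^{(k)})$ with $z_k\to z$. Each $z_k$ equals $y^{(k),i_k}$ for some index $i_k\in\argmin_j f(y^{(k),j})$. Since $[n]$ is finite, we pass to a subsequence along which $i_k\equiv i$ is constant. Then $z=\lim y^{(k),i}=y^i$. Moreover $f(y^{(k),i})\le f(y^{(k),j})$ for all $j$, and by continuity of $f$ this gives $f(y^i)\le f(y^j)$. Hence $i\in\argmin_j f(y^j)$ and $z\in F(\bs y)$.

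\emph{Step 2: one-sided limits of $\bs y_s$ and $\bs y_{s-}$.} Let $s_k\downarrow t$ with $s_k>t$. Right continuity gives $\bs y_{s_k}\to\bs y_t$. For the left limits, pick $r_k\in(t,s_k)$ with $\|\bs y_{r_k}-\bs y_{s_k-}\|\le 1/k$. Then $r_k\downarrow t$, so $\bs y_{r_k}\to\bs y_t$, and hence $\bs y_{s_k-}\to\bs y_t$.

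Now let $s_k\uparrow t$ with $s_k<t$. The existence of the left limit gives $\bs y_{s_k}\to\bs y_{t-}$. For the left limits, pick $r_k\in(s_k-1/k,s_k)$ with $\|\bs y_{r_k}-\bs y_{s_k-}\|\le1/k$. Then $r_k\uparrow t$ with $r_k<t$, so $\bs y_{s_k-}\to\bs y_{t-}$.

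\emph{Step 3: conclusion.} Take $z$ in the right-hand limsup, with $s_k$ and $z_k$ as above. Passing to a subsequence, either $z_k\in F(\bs y_{s_k})$ for all $k$ or $z_k\in F(\bs y_{s_k-})$ for all $k$. In both cases the argument of $F$ converges to $\bs y_t$ by Step 2, so Step 1 gives $z\in F(\bs y_t)$. The left-sided case is identical, with the limit $\bs y_{t-}$.

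There is no real obstacle. The only points needing care are the strictness $s_k\neq t$ (for $s\uparrow t$ one must exclude $s=t$, otherwise $\bs y_t$ could enter) and the diagonal choice of $r_k$ used to handle $\bs y_{s_k-}$.
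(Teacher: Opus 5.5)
Your proof is correct and takes the same route as the paper. The paper observes that $\bs y_s$ and $\bs y_{s-}$ both converge to $\bs y_t$ (resp.\ $\bs y_{t-}$) as $s\downarrow t$ (resp.\ $s\uparrow t$), and then invokes closedness of the graph of $F$, which follows from continuity of $f$. You supply, correctly, the details the paper leaves implicit: the finite-index subsequence argument for the closed graph, the diagonal choice of $r_k$ for the left limits, and the requirement $s\neq t$.
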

\begin{proof}
As $s\downarrow t$, both $\bs y_s$ and $\bs y_{s-}$ converge to
$\bs y_t$. Similarly, as $s\uparrow t$, both converge to
$\bs y_{t-}$. The result follows from the closedness of the graph of
$F$, which is a consequence of the continuity of $f$.
\end{proof}

\begin{lemma}
  \label{lem:small-h}
  Let $f$ be continuous. Let $\bs y$ be a càdlàg process on $\bR^{dn}$. For every $t>0$,
  $$
  \limsup_{h\downarrow 0} \ \mathrm{conv}\left(\bigcup_{s\in[t-h,t+h]}F (\bs y_s)\cup F(\bs y_s^-)\right) = \mathrm{conv} (F (\bs y_t)\cup F(\bs y_t^-)) 
  $$
\end{lemma}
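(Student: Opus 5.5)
We prove the two inclusions separately. Write $K_h \eqdef \bigcup_{s\in[t-h,t+h]}F(\bs y_s)\cup F(\bs y_s^-)$ and $K_0\eqdef F(\bs y_t)\cup F(\bs y_t^-)$. Since the sets $\conv K_h$ decrease as $h\downarrow 0$, the $\limsup$ of this family is simply $\bigcap_{h>0}\overline{\conv K_h}$. We will identify this intersection with $\conv K_0$.

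\textbf{Inclusion $\supset$.} This is immediate: $K_0\subset K_h$ for every $h>0$, so $\conv K_0\subset\conv K_h$ for all $h$.

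\textbf{Inclusion $\subset$.} The key step is to show that $K_h$ concentrates near $K_0$. Precisely, we aim for
\[
\lim_{h\downarrow0}\sup_{z\in K_h} d(z,K_0)=0 .
\]
We argue by contradiction. Suppose there exist $h_k\downarrow 0$, times $s_k\in[t-h_k,t+h_k]$, and points $z_k\in F(\bs y_{s_k})\cup F(\bs y_{s_k}^-)$ with $d(z_k,K_0)\geq\varepsilon$. Since $\bs y$ is locally bounded, we may extract a subsequence along which $z_k\to z$. Passing to a further subsequence, we may assume one of three cases:
\begin{enumerate}[label=(\alph*)]
\item $s_k>t$ for all $k$;
\item $s_k<t$ for all $k$;
\item $s_k=t$ for all $k$.
\end{enumerate}
In case (a), Lemma~\ref{lem:lim-F-leftright} gives $z\in F(\bs y_t)$. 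In case (b), the same lemma gives $z\in F(\bs y_t^-)$. In case (c), $z_k$ already lies in $K_0$, which is closed because it is a finite set. Each case contradicts $d(z,K_0)\geq\varepsilon$.

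\textbf{Passing to convex hulls.} With the concentration in hand, we have $K_h\subset K_0+\bar B(0,\eta_h)$ with $\eta_h\to0$. The right-hand side has convex hull contained in $\conv K_0+\bar B(0,\eta_h)$, since the Minkowski sum of convex sets is convex. Hence $\overline{\conv K_h}\subset \conv K_0+\bar B(0,\eta_h)$. Now $\conv K_0$ is the convex hull of finitely many points, so it is compact. Intersecting over $h>0$ therefore yields exactly $\conv K_0$, which gives the inclusion $\subset$.

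The main obstacle is the concentration step, specifically the case bookkeeping: both right and left limits must be handled so that they match $F(\bs y_t)$ and $F(\bs y_t^-)$ respectively. The closedness of the graph of $F$, as used in Lemma~\ref{lem:lim-F-leftright}, is what makes each case go through.
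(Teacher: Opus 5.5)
Your proof is correct, and it handles the convex-hull step differently from the paper.

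\textbf{The paper's route.} The paper works directly with points of the hull. It writes $u_h\in\conv K_h$, by Carath\'eodory, as a convex combination of $d+1$ points $v_{l,h}\in F(\bs y_{s_{l,h}})\cup F(\bs y_{s_{l,h}}^-)$. It then extracts a subsequence along which both the weights and the points converge, and applies Lemma~\ref{lem:lim-F-leftright} to each limit $v_l$.

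\textbf{Your route.} You first prove a concentration statement for the generating sets, $\sup_{z\in K_h}d(z,K_0)\to 0$. You then pass it to the hulls via the inclusion $\conv\bigl(K_0+\bar B(0,\eta)\bigr)\subset\conv K_0+\bar B(0,\eta)$ and the compactness of $\conv K_0$.

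\textbf{Comparison.} Both arguments rest on the same compactness and on Lemma~\ref{lem:lim-F-leftright}.
\begin{itemize}
\item The paper's argument is shorter.
\item Yours avoids Carath\'eodory altogether and gives a uniform bound $\overline{\conv K_h}\subset\conv K_0+\bar B(0,\eta_h)$ with $\eta_h\to 0$.
\item Yours also spells out the split into $s_k>t$, $s_k<t$ and $s_k=t$. The paper leaves this implicit when it invokes Lemma~\ref{lem:lim-F-leftright} for $s_{l,h}\to t$. That lemma only treats one-sided approaches, so a further one-sided subsequence, plus the trivial case $s=t$, is tacitly needed there as well.
\end{itemize}
Your identification of the $\limsup$ of the decreasing family with $\bigcap_{h>0}\overline{\conv K_h}$ is correct and agrees with the sequential reading used in the paper.
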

\begin{proof}
  We prove the inclusion $\subset$, the other inclusion being trivial. Consider a collection  $(u_h)_{h>0}$ of points in $\bR^d$, converging to a point $u\in \bR^d$ as $h\downarrow 0$ along some subsequence, and such that, for every $h$,
  $$
  u_h = \sum_{l=1}^{d+1} w_{l,h} v_{l,h}\,,
  $$
  where $(w_{l,h})$ is on the $(d+1)$-dimensional simplex, and $v_{l,h} \in F (\bs y_{s_{l,h}})\cup F(\bs y_{s_{l,h}}^-)$ for some $s_{l,h}\in [t-h,t+h]$. In particular, $s_{l,h}\to t$.
  By relative compactness, one can extract a subsequence along which, for every $j$, $w_{l,h}$ and $v_{l,h}$ are converging, respectively to some $w_l$ and $v_l$.
  $(w_l)$ is also on the simplex, and $v_l\in F(\bs y_t)\cup F(\bs y_t^-)$ by Lem.~\ref{lem:lim-F-leftright}. Moreover, $u= \sum_l w_l v_l$, which concludes the proof.
\end{proof}


\begin{lemma}
\label{lem:A1-closed}
Let $f$ be continuous. For every $\delta,T>0$, define
$\cA_1^{\delta,T}:\cC_d\rightrightarrows\cD_d$ by
\[
\cA_1^{\delta,T}(x)
\eqdef
\left\{
y\in\cD_d:
\forall\,0\leq t<T,\quad
f(y_t)\leq\min_{0\leq s\leq t}f(x_s)+\delta
\right\}.
\]
Then $\graph(\cA_1^{\delta,T})$ is closed in
$\cC_d\times\cD_d$.
\end{lemma}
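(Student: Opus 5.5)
We take a sequence $(x^k,y^k)\in\graph(\cA_1^{\delta,T})$ converging to $(x,y)$ in $\cC_d\times\cD_d$, and we show that $y\in\cA_1^{\delta,T}(x)$. That is, we must prove $f(y_t)\leq\min_{0\leq s\leq t}f(x_s)+\delta$ for every $t\in[0,T)$. The difficulty is that Skorokhod convergence gives pointwise convergence $y^k_t\to y_t$ only at continuity points of $y$. We therefore first prove the inequality at those points, and then extend it to all $t$ by right-continuity.

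\textbf{Step 1: continuity points.} Let $t\in[0,T)$ be a continuity point of $y$. Then $y^k_t\to y_t$, so by continuity of $f$ we have $f(y^k_t)\to f(y_t)$. The map $x\mapsto\min_{0\leq s\leq t}f(x_s)$ is continuous on $\cC_d$: locally uniform convergence of $x^k$ to $x$ makes $f\circ x^k$ converge uniformly to $f\circ x$ on $[0,t]$, because $f$ is uniformly continuous on a compact set containing all the paths restricted to $[0,t]$. Passing to the limit in $f(y^k_t)\leq\min_{s\leq t}f(x^k_s)+\delta$ gives the inequality for $(x,y)$ at time $t$.

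\textbf{Step 2: all $t\in[0,T)$.} Since $y$ is càdlàg, its discontinuity set is countable. Fix any $t\in[0,T)$ and choose continuity points $t_m\downarrow t$ with $t_m<T$; this is possible because $t<T$. By Step 1,
\[
f(y_{t_m})\leq\min_{0\leq s\leq t_m}f(x_s)+\delta\leq\min_{0\leq s\leq t}f(x_s)+\delta,
\]
where the second inequality holds because the running minimum is nonincreasing in time. Right-continuity of $y$ and continuity of $f$ then give $f(y_{t_m})\to f(y_t)$, which yields the inequality at $t$. (Alternatively, using the continuity of the running minimum in $t$ avoids the monotonicity remark.)

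The only delicate point is Step 1: one must use the fact that $J_1$ convergence implies pointwise convergence at continuity points of the limit, as in \cite{billingsley1999convergence}. This is also why the condition is imposed on the half-open interval $[0,T)$, since it lets us approximate every $t$ from the right within the interval.
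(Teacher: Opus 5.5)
Your proof is correct. It differs from the paper in how it deals with times at which $y$ is discontinuous.

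The paper fixes an arbitrary $t<T$ and works directly with the Skorokhod time changes $\lambda_n$ on $[0,T']$, where $T'>T$ is a continuity point of $y$. It evaluates the constraint for $(x^n,y^n)$ at the moving time $t_n=\lambda_n(t)$, which satisfies $t_n<T$ for large $n$. This gives $y^n_{t_n}\to y_t$ at every $t$, whether or not $y$ is continuous there. The price is that the right-hand side is taken at $t_n$ rather than $t$. One therefore needs $\min_{s\leq t_n}f(x^n_s)\to\min_{s\leq t}f(x_s)$, which combines uniform convergence of $f\circ x^n$ with continuity in time of the running minimum of $f\circ x$.

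You instead keep the evaluation time fixed. You only use the standard fact that $J_1$ convergence yields $y^k_t\to y_t$ at continuity points of $y$. You then recover the remaining countably many times from right-continuity of $y$, density of continuity points, and monotonicity of the running minimum.

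Your route is a bit longer but avoids manipulating time changes altogether. The paper's route handles every $t<T$ in a single step. In both arguments the strict inequality $t<T$ plays the same role: it ensures $t_n<T$ (paper) or $t_m<T$ (yours). As you note, this is why the constraint is imposed on $[0,T)$ rather than $[0,T]$.
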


\begin{proof}
Let $(x^n,y^n)\to(x,y)$ in $\cC_d\times\cD_d$, with
$y^n\in\cA_1^{\delta,T}(x^n)$. Set $t<T$ and choose $T'>T$.
Skorokhod convergence provides time changes $\lambda_n$ on
$[0,T']$ such that
\[
\|\lambda_n-I\|_{T'}\to0,
\qquad
\|y^n\circ\lambda_n-y\|_{T'}\to0,
\]
an $T'$ a continuity point of $y$.
In particular, $t_n\eqdef\lambda_n(t)<T$ for all sufficiently large
$n$. Hence
\[
f(y^n_{t_n})
\leq
\min_{0\leq s\leq t_n}f(x_s^n)+\delta.
\]
The left-hand side converges to $f(y_t)$. Moreover,
$f\circ x^n\to f\circ x$ uniformly on $[0,T']$, and therefore
\[
\min_{0\leq s\leq t_n}f(x_s^n)
\longrightarrow
\min_{0\leq s\leq t}f(x_s).
\]
Passing to the limit gives
\[
f(y_t)\leq\min_{0\leq s\leq t}f(x_s)+\delta.
\]
Since $t<T$ was arbitrary, $y\in\cA_1^{\delta,T}(x)$.
\end{proof}

\begin{lemma}
\label{lem:A2-closed}
The correspondence $\cA_2:\cC_d\rightrightarrows\cD_d$ defined by
\[
\cA_2(x)
\eqdef
\left\{
y\in\cD_d:
\forall\,0\leq s<t,\quad
y_t\in\{y_s\}\cup x((s,t])
\right\}
\]
has a closed graph.
\end{lemma}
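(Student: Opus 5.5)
The plan is to take a sequence $(x^n,y^n)\to(x,y)$ in $\cC_d\times\cD_d$ with $y^n\in\cA_2(x^n)$, fix $0\leq s<t$, and show $y_t\in\{y_s\}\cup x((s,t])$. The closedness of this target set is where the argument is delicate, because $x((s,t])$ is not closed: its closure adds $x_s$. So we first prove the weaker statement $y_t\in\{y_s\}\cup x([s,t])$ and then argue that $x_s$ may be dropped.

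\textbf{Reduction to continuity points.} Let $\operatorname{Cont}(y)$ be the continuity set of $y$; its complement is countable. At such times, $y^n_r\to y_r$ pointwise. Conversely, for $s,t$ arbitrary we approximate $s$ and $t$ from the right by continuity points $s_k\downarrow s$, $t_k\downarrow t$ with $s_k<t_k$. Right continuity of $y$ gives $y_{s_k}\to y_s$ and $y_{t_k}\to y_t$. Continuity of $x$ transports the statement from $(s_k,t_k)$ to $(s,t)$ in its closed form $y_t\in\{y_s\}\cup x([s,t])$. For continuity points $s<t$ we use the hypothesis for $y^n$. Passing to a subsequence, either $y^n_t=y^n_s$ infinitely often, giving $y_t=y_s$, or $y^n_t=x^n_{u_n}$ with $u_n\in(s,t]$. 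In the latter case a further subsequence gives $u_n\to u\in[s,t]$, and uniform convergence of $x^n$ yields $y_t=x_u$. Note that if $u>s$ we are done; the bad case is $u=s$, i.e.\ $y_t=x_s$.

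\textbf{Removing the endpoint $x_s$ (main obstacle).} We must show that if $y_t=x_s\neq y_s$ then $y_t\in x((s,t])$. Suppose $y_t=x_s$. For every $s'\in(s,t)$, the closed-form property applied to $(s',t)$ gives either $y_t=y_{s'}$ or $y_t=x_{u'}$ with $u'\in[s',t]\subset(s,t]$. In the latter case we are done. So assume $y_{s'}=y_t$ for all $s'\in(s,t)$. Letting $s'\downarrow s$, right continuity gives $y_s=y_t$, which is the desired conclusion $y_t\in\{y_s\}$. Hence the relation with the half-open set $(s,t]$ holds for all $0\leq s<t$.

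It remains to ensure the closed-form step is valid at non-continuity points without circularity. I would use the closed form $y_t\in\{y_s\}\cup x([s,t])$ as an intermediate lemma valid for all $s<t$ (via the right-approximation above, which uses only continuity points). The refinement step then uses only that lemma plus right continuity. I expect the subtle point to be exactly this bookkeeping: the approximation $t_k\downarrow t$ must keep $t_k$ in $\operatorname{Cont}(y)$, which is possible since that set is dense. No Skorokhod time changes are needed beyond pointwise convergence at continuity points.
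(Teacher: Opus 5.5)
Your proof is correct, and it takes a somewhat different route from the paper's.

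The paper works directly with Skorokhod time changes $\lambda_n$ on $[0,T]$, for a continuity point $T>t$ of $y$. These give $y^n_{\lambda_n(r)}\to y_r$ at every $r\le T$, not only at continuity points. The paper then removes the endpoint $x_s$ before taking limits. If $y_t\neq y_s$, right continuity at $s$ gives $r\in(s,t)$ with $y_r\neq y_t$, so $y^n_{\lambda_n(r)}\neq y^n_{\lambda_n(t)}$ for large $n$. The defining property of $y^n$ on the pair $(\lambda_n(r),\lambda_n(t))$ then gives $u_n\in(\lambda_n(r),\lambda_n(t)]$, and any limit point $u$ lies in $[r,t]\subset(s,t]$.

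You use only pointwise convergence off the countable discontinuity set of $y$. This forces you to go in three steps:
\begin{enumerate}
\item obtain the weaker closed-interval relation $y_t\in\{y_s\}\cup x([s,t])$ at continuity points;
\item extend it to all pairs by approximating $s$ and $t$ from the right;
\item upgrade $[s,t]$ to $(s,t]$.
\end{enumerate}
Your upgrade, the dichotomy over $s'\in(s,t)$, is the same device as the paper's choice of $r$. You apply it after passing to the limit, whereas the paper applies it before.

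What your version buys is a clean separation. It shows that, for càdlàg $y$ and continuous $x$, the closed-interval and half-open-interval forms of the consistency condition are equivalent. Closedness of the graph then reduces to an elementary pointwise-limit argument. The paper's version is shorter and needs no extra approximation step, at the cost of handling time changes explicitly.

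A minor point: the assumption $y_t=x_s$ in your ``main obstacle'' step is never used. The dichotomy argument works whenever $y_t\neq y_s$.
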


\begin{proof}
Let $(x^n,y^n)\to(x,y)$ in $\cC_d\times\cD_d$, with
$y^n\in\cA_2(x^n)$ for every $n$. Set $0\leq s<t$. If $y_t=y_s$,
there is nothing to prove. Otherwise, by right continuity of $y$ at
$s$, there exists $r\in(s,t)$ such that $y_r\neq y_t$.

Choose $ T>t$ at which $y$ is continuous. The restrictions of
$y^n$ to $[0, T]$ then converge to that of $y$ in the
Skorokhod topology. Hence, there exist increasing bijections
$\lambda_n:[0, T]\to[0, T]$ such that
\[
\|\lambda_n-I\|_{ T}\longrightarrow0,
\qquad
\|y^n\circ\lambda_n-y\|_{ T}\longrightarrow0.
\]
Since $y_r\neq y_t$, for all sufficiently large $n$,
\[
y^n_{\lambda_n(r)}\neq y^n_{\lambda_n(t)}.
\]
Using $y^n\in\cA_2(x^n)$ and
$\lambda_n(r)<\lambda_n(t)$, there exists
$u_n\in(\lambda_n(r),\lambda_n(t)]$ such that
\[
y^n_{\lambda_n(t)}=x^n_{u_n}.
\]

After extracting a subsequence, we may assume that $u_n\to u$ for some
$u\in[0, T]$. Since
\[
\lambda_n(r)<u_n\leq\lambda_n(t),
\]
we obtain $r\leq u\leq t$, and hence $u\in(s,t]$. Moreover,
\[
\|x^n_{u_n}-x_u\|
\leq
\|x^n-x\|_{ T}
+\|x_{u_n}-x_u\|
\longrightarrow0,
\]
where we used the uniform convergence of $x^n$ to $x$ on
$[0, T]$ and the continuity of $x$. Since
$y^n_{\lambda_n(t)}\to y_t$, it follows that $y_t=x_u$, and therefore
\[
y_t\in x((s,t]).
\]
Thus, for every $0\leq s<t$,
\[
y_t\in\{y_s\}\cup x((s,t]),
\]
which proves that $y\in\cA_2(x)$. Hence $\graph(\cA_2)$ is closed.
\end{proof}

\section{Conclusions}

In this paper, we established the continuous-time limit of momentum-free PSO through weak convergence
of sequences of interpolated processes with diminishing step-size. 
Our work specifically tackles the challenges of the irregular personal-best and global-best processes, that
have previously been replaced by smooth approximations in other works concerning continuous-time dynamics of PSO.

Our work opens the path to novel theoretical analyses of PSO. This continuous-time framework can be used to study the exploration effect of the Brownian noise, the exploitation mechanisms of exact personal- and global-best attractors, and hitting times of the algorithm for a 
fixed number of particles.


Furthermore, the analytical approach developed in this paper is not strictly limited to PSO. 
The mathematical tools used to handle the discontinuous, path-dependent selection rules can be extended
to a broader class of interacting particle algorithms with exact minimizers over particle trajectories. 
In particular, this framework could be adapted to more general 
drift functions $b_i(\bs x, \bs y, \theta)$ and diffusion coefficients $\sigma_i(\bs x, \bs y)$, provided they are sufficiently regular
 to ensure local boundedness of the dynamics. Additionally, the 
 state space can naturally be augmented to include auxiliary variables, such as velocity, to handle PSO with momentum.

\subsection*{Declaration on the use of generative AI tools}

During the preparation of this work, the authors used ChatGPT in order to make writing improvements and suggest minor simplifications in the proofs. The original ideas and core mathematical reasoning were developed entirely by the authors. After using AI tools, the authors reviewed and edited the content as needed and take full responsibility for the content of the published article.

\subsection*{Acknowledgements} This work benefited from Hi! PARIS and State funding managed by the French National Research Agency (ANR) under the France 2030 program, reference ANR-23-IACL-0005.


\bibliographystyle{elsarticle-num}
\bibliography{literature}

\end{document}